\documentclass[12pt,reqno]{amsart}
\usepackage[utf8]{inputenc}
\usepackage[T1]{fontenc}
\usepackage{amsmath}
\usepackage{amsthm}
\usepackage{amssymb}
\usepackage[abbrev]{amsrefs}
\usepackage{mathrsfs}
\usepackage[dvipsnames]{xcolor}
\usepackage{bm}
\usepackage{enumitem}
\usepackage{hyperref}
\usepackage{graphicx}
\usepackage[all]{xy}
\AtBeginDocument{\def\MR#1{}}
\makeatletter
\@namedef{subjclassname@2020}{%
\textup{2020} Mathematics Subject Classification}
\makeatother
\numberwithin{equation}{section}
\allowdisplaybreaks
\newtheorem{thmintro}{}

\newtheorem{theoremintro}[thmintro]{Theorem}
\newtheorem{questionintro}[thmintro]{Question}
\newtheorem{thm}{}[section]
\newtheorem{theorem}[thm]{Theorem}
\newtheorem{corollary}[thm]{Corollary}
\newtheorem{lemma}[thm]{Lemma}
\newtheorem{proposition}[thm]{Proposition}
\newtheorem{question}[thm]{Question}
\theoremstyle{definition}

\newtheorem{remark}[thm]{Remark}
\let\tq=\colon
\let\tp=\colon
\newcommand{\NN}{\ensuremath{\mathbb{N}}}
\newcommand{\RR}{\ensuremath{\mathbb{R}}}
\newcommand{\FF}{\ensuremath{\mathbb{F}}}
\newcommand{\QQ}{\ensuremath{\mathbb{Q}}}
\newcommand{\ZZ}{\ensuremath{\mathbb{Z}}}

\newcommand{\Dt}{\ensuremath{\mathcal{D}}}

\newcommand{\Et}{\ensuremath{\mathcal{E}}}
\newcommand{\Xt}{\ensuremath{\mathcal{X}}}

\newcommand{\Ft}{\ensuremath{\mathcal{F}}}
\newcommand{\St}{\ensuremath{\mathcal{S}}}

\newcommand{\Id}{\ensuremath{{\rm{Id}}}}
\DeclareMathOperator*{\co}{co}
\DeclareMathOperator*{\cocl}{\overline{co}}
\DeclareMathOperator*{\essinf}{ess\,inf}
\DeclareMathOperator*{\esssup}{ess\,sup}
\DeclareMathOperator{\lip}{lip}
\DeclareMathOperator{\Lip}{Lip}
\DeclareMathOperator{\LipD}{DL}
\DeclareMathOperator{\md}{md}
\DeclareMathOperator{\supp}{supp}

\newcommand{\abs}[1]{\left\lvert#1\right\rvert}
\newcommand{\norm}[1]{\left\lVert#1\right\rVert}

\newcommand{\enbrace}[1]{\left\lbrace#1\right\rbrace}

\newcommand{\enpar}[1]{\left(#1\right)}
\newcommand{\littletaller}{\mathchoice{\vphantom{\big|}}{}{}{}}
\newcommand\restr[2]{{\left.\kern-\nulldelimiterspace #1 \littletaller \right|_{#2}}}

\author[F. Albiac]{Fernando Albiac}
\address{Institute for Advanced Materials and Mathematics (INAMAT$^{2}$) and Department of Mathematics, Statistics and Computer Sciences\\ Public University of Navarre\\
Campus de Arrosadia, 31006 Pamplona\\ Spain}
\email{fernando.albiac@unavarra.es}
\author[J. L. Ansorena]{Jos\'e L. Ansorena}
\address{Department of Mathematics and Computer Sciences\\
Universidad de La Rioja\\
Logro\~no\\
26004 Spain}
\email{joseluis.ansorena@unirioja.es}
\author[P. Wald]{Pietro Wald}
\address{Warwick Mathematics Institute, University of Warwick,
Coventry
CV4 7AL
United Kingdom
}
\email{Pietro.Wald@warwick.ac.uk}
\address{Department of Mathematics and Statistics,
University of Jyväskylä,
P.O. Box 35 FI-40014 Finland}
\email{pietro.p.wald@jyu.fi}
\subjclass[2020]{46B03, 46B07, 46B10, 46B15, 46B20, 46B25, 46B42, 46B08, 46E30, 46E40}
\keywords{Lipschitz map, quasi-Banach space, Radon-Nikod\'ym property, metric derivative}
\begin{document}
\title[Differentiability of Lipschitz curves in $p$-Banach spaces]{Differentiability of Lipschitz curves in $\bm p$-Banach spaces}
\begin{abstract}
The problem of differentiating Lipschitz curves in Banach spaces goes back to Tamarkin in the early 1930s and is one of the origins of the Radon--Nikod\'ym property in classical Banach space theory. In this article we characterize those quasi-Banach spaces $X$ for which every Lipschitz curve $F\tp\RR\to X$ admits a point of differentiability and prove that this property holds if and only if $X$ is isomorphic to a Banach space with the Radon--Nikod\'ym property, thus substantiating a conjecture of Kalton \cite{Kalton2008}. Equivalently, every nonlocally convex quasi-Banach space admits a nowhere differentiable Lipschitz curve. Our result is obtained from a quantitative characterization of local convexity in terms of the infinitesimal oscillation of Lipschitz curves. Motivated by this obstruction, we then investigate positive differentiability phenomena in $\ell_p$, $0<p<1$, and identify natural decoupled-coordinate constructions for which Lipschitz regularity nevertheless implies almost-everywhere differentiability. We also contrast this behavior with metric differentiability for the natural metric and the quasi-metric on $\ell_p$.
\end{abstract}
\thanks{F.\@ Albiac acknowledges the support of the Spanish Ministry for Science, Innovation, and Universities under Grant PID2025-167660NB-I00 funded by MICIU/AEI/10.13039/501100011033 and ERDF/EU. F.\@ Albiac and J.\@ L.\@ Ansorena acknowledge the support of the Spanish Ministry for Science, Innovation, and Universities under Grant PGC2018-095366-B-I00 for \emph{Functional Analysis Techniques in Approximation Theory and Applications (TAFPAA)}. P.\@ Wald was supported by the Warwick Mathematics Institute Centre for Doctoral Training, and acknowledges funding from the University of Warwick and the UK Engineering and Physical Sciences Research Council (Grant number: EP/W524645/1). He acknowledges funding through the projects \emph{GeoQuantAM: Geometric and Quantitative methods in Analysis on Metric spaces} and \emph{Quantitative differentiability and rectifiability in metric spaces} when ultimating the latest revision.}
\maketitle
\section{Introduction}\noindent
Differentiability is one of the fundamental tools by which nonlinear information can be converted into linear structure. If a Lipschitz mapping between normed spaces is differentiable at a point, its derivative provides a bounded linear operator which retains much of the metric information carried by the original map. This principle has played an important role in the nonlinear classification of Banach spaces and, in particular, in results which extract linear embeddings or isomorphisms from Lipschitz equivalences (see, for instance, \cite{HeinrichMankiewicz1982}, \cite{BenLin}*{Chapter 5} and \cite{AlbiacKalton2016}*{Chapter 14}).

It is therefore natural to ask how far differentiation methods extend beyond the locally convex setting. Quasi-Banach spaces provide the natural framework in which to pose this question. They retain a rich linear and metric structure, while the failure of local convexity deprives us of several of the basic tools upon which classical differentiation theory depends. The purpose of the present paper is to show that, for Lipschitz curves, this loss of local convexity has a remarkably rigid consequence.

The starting point goes back to one of the earliest questions in vector-valued analysis. In the early 1930s, Tamarkin asked for which Banach spaces $X$ every Lipschitz mapping $F\tp[0,1]\to X$ is differentiable almost everywhere. This apparently elementary one-dimensional problem became one of the sources of the theory that is now associated with the Radon--Nikod\'ym property.

The historical progress that followed was rapid. Bochner's work on vector-valued integration \cites{Bochner1933,Bochner1933b} connected differentiation of Banach-space-valued functions with the possibility of recovering absolutely continuous mappings from their derivatives. Birkhoff \cite{Birkhoff1935} proved that Hilbert spaces have the differentiability property considered by Tamarkin. Shortly afterwards, Clarkson \cite{Clar} showed that uniformly convex Banach spaces enjoy the same property and observed that $\ell_1$ does as well, whereas $c_0$ and $L_1[0,1]$ do not. In the same period, Dunford and Morse \cite{DunMor} proved that every Banach space with a boundedly complete basis has the Gelfand--Fr\'echet property, as this differentiability property was sometimes called.

These developments eventually became part of the general theory of the Radon--Nikod\'ym property. In modern terminology, a Banach space $X$ has the Radon--Nikod\'ym property (RNP) if and only if every Lipschitz curve $ F\tp\RR\to X$ is differentiable almost everywhere. This condition is equivalent to a long list of fundamental properties involving vector measures, Bochner integration, martingale convergence, dentability, and representations of operators defined on $L_1$. Thus, the differentiability of one-dimensional Lipschitz curves is not merely one manifestation of the RNP among many others: historically and conceptually, it lies at the origin of the property itself.

There is a further reason for revisiting Tamarkin's question in the quasi-Banach setting. Suppose that $\Phi\tp Y\to X$ is a Lipschitz map between quasi-normed spaces, i.e.,
\[
\Vert \Phi(u)-\Phi(v)\Vert _X
\leq C \Vert u-v\Vert _Y, \qquad u,v\in Y,
\]
for some $C\in[0,\infty)$. Assume, as it is customary, that $\norm{\cdot}_X$ is a continuous map. Then, if the derivative $D(\Phi)(y)$ exists at some $y\in Y$,
\[
\Vert D(\Phi)(y)z\Vert _X \leq C\Vert z\Vert _X,\quad z\in Y.
\]
If $\Phi$ is bi-Lipschitz, i.e., there is a further constant $c>0$ such that
\[
c \Vert u-v\Vert _Y
\leq\Vert \Phi(u)-\Phi(v)\Vert _X, \qquad u,v\in Y,
\]
then
\[
\Vert D(\Phi)(y)z\Vert _X \ge c\Vert z\Vert _Y, \qquad \text{for all}\, z\in Y.
\]
Hence $D(\Phi)(y)$ is a linear isomorphic embedding of $Y$ into $X$. Before differentiation can be used as a linearization tool for nonlinear mappings between quasi-Banach spaces, however, one must first settle the most elementary directional problem: should a Lipschitz mapping from the real line into a quasi-Banach space possess a point of differentiability at all?

Throughout this paper, $I$ will denote an interval of the real line. Let $X$ be a quasi-Banach space. A map $F\tp I \to X$ is said to be Lipschitz if
\[
\Vert F(t)-F(s)\Vert _X\leq L|t-s|, \qquad s,t\in I,
\]
for some $L<\infty$, and $F$ is differentiable at $s\in I$ if there is $v\in X$ such that
\[
\lim_{t\to s} \norm{\frac{F(t)-F(s)}{t-s}-v} _X=0.
\]
Thus, differentiability is always understood in the strong sense, with convergence taking place in the quasi-norm topology.

At first sight, there are compelling reasons to expect a theory radically different from the Banach-space one. Let $0<p<1$ and consider
\begin{equation}\label{eq:LipLp}
F\tp [0,1]\to L_p[0,1],
\qquad
F(t)=\chi_{[0,t]}.
\end{equation}
Then
\[
\Vert F(t)-F(s)\Vert _p=|t-s|^{1/p},
\]
and consequently
\[
\lim_{t\to s} \frac{\Vert F(t)-F(s)\Vert _p}{|t-s|}
=\lim_{t\to s} |t-s|^{1/p-1}= 0.
\]
Thus $F$ is a nonconstant Lipschitz curve whose derivative is identically zero. More generally, Kalton\cite{KaltonZero} proved the following result.

\begin{theorem}[\cite{KaltonZero}*{Theorem 3.3}]\label{thm:KZ}
Suppose that a quasi-Banach space $X$ has trivial dual. Then for every $x\in X$ there is a differentiable curve $F\tp[0,1]\to X$ such that $F^\prime=0$, $F(0)=0$, and $F(1)=x$.
\end{theorem}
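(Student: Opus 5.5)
The plan is to use triviality of the dual only through one density statement, and then build $F$ as a uniform limit of piecewise linear curves whose slopes get smaller and smaller while the partition gets finer.

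\emph{Step 1: density.} If $X^*=\{0\}$, then for every $\varepsilon>0$ the convex hull of the ball $\varepsilon B_X$ is dense in $X$. Suppose instead that the closure $K$ of $\co(\varepsilon B_X)$ is a proper subset. Then $K$ is a closed, convex, absorbing neighbourhood of $0$, and its Minkowski functional is a continuous seminorm. This seminorm is nonzero because $K\neq X$. Hahn--Banach applied to it gives a nonzero continuous linear functional, contradicting $X^*=\{0\}$. Replacing the quasi-norm by an equivalent $p$-norm (Aoki--Rolewicz), with $\norm{u+v}^p\le\norm{u}^p+\norm{v}^p$, changes nothing in the statement, so we assume this from now on.

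\emph{Step 2: refining one segment.} Let $\sigma(t)=a+(t-\alpha)v$ be a linear segment on $[\alpha,\beta]$ of length $h=\beta-\alpha$. Given $\varepsilon,\eta>0$, we replace it by a piecewise linear curve with the same endpoints, all slopes of norm at most $\varepsilon$, and staying within $\eta$ of $\sigma$.

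By Step 1, write $v=\sum_i\lambda_i v_i+r$, where the $\lambda_i>0$ sum to $1$, $\norm{v_i}\le\varepsilon/2$, and $\norm{r}$ is as small as we like. Split $[\alpha,\beta]$ into consecutive intervals of lengths $\lambda_i h$ and give the new curve slope $y_i=v_i+r$ on the $i$-th one. Since $\sum\lambda_i=1$, we get $\sum_i\lambda_i h\,y_i=hv$, so the endpoints are matched \emph{exactly}. Also $\norm{y_i}\le C\varepsilon$.

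For the uniform distance to $\sigma$, we may further subdivide each piece and repeat the list $(\lambda_i,y_i)$ periodically $m$ times, each repetition on a block of length $h/m$. Within one block the new curve leaves the chord by at most about $(h/m)\max\norm{y_i}$. Since the new curve agrees with $\sigma$ at all block endpoints, it stays within $\eta$ of $\sigma$ once $m$ is large.

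\emph{Step 3: iteration.} Start with $F_0(t)=tx$. At stage $n$, apply Step 2 to every linear piece of $F_{n-1}$, using $\varepsilon_n\downarrow0$ and a tolerance $\eta_n$. Let $h_n$ be the mesh of the new partition, which is the minimal piece length. Choose the tolerances in order so that
\[
\sum_{k>n}\eta_k^p\le (\varepsilon_n h_n)^p .
\]
This is possible because each $\eta_k$ is chosen only after $h_{k-1}$ is known. Then $F_n\to F$ uniformly, with $\norm{F-F_n}_\infty\le\varepsilon_n h_n$. Every node of every $F_n$ is kept exactly, so $F(0)=0$ and $F(1)=x$.

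\emph{Step 4: $F'=0$ everywhere; this is the main obstacle.} Fix $s\neq t$ and let $n$ be such that $|t-s|$ is comparable to the stage-$n$ scale. There are two contributions to bound:
\[
\norm{F_n(t)-F_n(s)}\lesssim\varepsilon_n|t-s|,
\qquad
2\norm{F-F_n}_\infty\lesssim\varepsilon_n h_n .
\]
The first holds because all stage-$n$ slopes have norm at most $C\varepsilon_n$, and since pieces are concatenated we use the $p$-norm triangle inequality across pieces. Both terms are then $o(|t-s|)$.

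The delicate point is that the pieces have different lengths, from $h_n$ up to the longest piece length $H_n$. For $h_n\lesssim|t-s|\le H_n$ the estimate above works. For $|t-s|<h_n$ we must go to the stage $N$ at which $h_N\le|t-s|$, and then the bound on $\norm{F-F_N}_\infty$ still applies. The gap we must rule out is that $h_N$ might be much smaller than $|t-s|$ while $H_N$ is much larger, so that no single stage is comparable to $|t-s|$.

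I would handle this by making the partitions nearly uniform. At each stage, choose rational $\lambda_i$ with a common denominator and subdivide into equal subintervals, so that $H_n\le2h_n$. I expect this bookkeeping to be the only real difficulty. With it in place, $\norm{F(t)-F(s)}/|t-s|\to0$ uniformly, so $F$ is differentiable with $F'=0$.
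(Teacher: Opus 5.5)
The paper gives no proof of this statement; it is quoted from Kalton~\cite{KaltonZero}. Your argument therefore has to stand on its own, and it has a genuine gap in Step~4. The ``nearly uniform partition'' device does not touch it. The estimate $\norm{F_n(t)-F_n(s)}\lesssim\varepsilon_n|t-s|$ holds only when $[s,t]$ meets a bounded number $M$ of stage-$n$ pieces. For $M$ concatenated pieces with slopes of norm at most $C\varepsilon_n$, the $p$-triangle inequality gives only $M^{1/p-1}C\varepsilon_n|t-s|$. This loss is unavoidable: it is exactly what lets the stage-$n$ pieces reassemble a stage-$(n-1)$ displacement $hv$ with $\norm{v}\gg\varepsilon_n$. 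So stage $n$ controls only scales $|t-s|\approx h_n$.

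Every stage-$(n-1)$ piece is cut into $m_nk_n$ pieces, where $k_n$ is the number of terms of the convex combination, and nothing in your construction bounds $h_{n-1}/h_n$. For $h_n\ll|t-s|\ll h_{n-1}$ no stage works. Stage $n$ produces the unbounded factor $M^{1/p-1}$ with $M\approx|t-s|/h_n$. Stage $n-1$ produces the error term $2\norm{F-F_{n-1}}_\infty$, for which you only have the bound $2\varepsilon_{n-1}h_{n-1}$, and that is not $o(|t-s|)$. Requiring $H_n\le 2h_n$ is irrelevant: the problem is the gap between consecutive stages, not the spread of lengths within one stage.

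At such scales $F_n(t)-F_n(s)$ splits into two parts. Full blocks add up to a multiple of $v$ and are harmless. Parts of at most two blocks contribute $h/m$ times consecutive partial sums $\sum_{i=a}^{b}\lambda_iy_i$ of the decomposition chosen in Step~2. What you need is
\[
\norm{\sum_{i=a}^{b}\lambda_i y_i}\le C\norm{v}\sum_{i=a}^{b}\lambda_i, \qquad 1\le a\le b,
\]
with $C$ independent of the stage. In other words, the polygonal path replacing a segment of slope $v$ must be $C\norm{v}$-Lipschitz, not merely have small slopes and the correct endpoint.

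Step~1 provides nothing of the sort. Density of $\co(\varepsilon B_X)$ allows a decomposition $v=\tfrac12(2w)+\tfrac12(2v-2w)$ with $\norm{w}$ huge, each half then split into small vectors. The resulting curve has difference quotients of size $2\norm{w}$ at scale $h/(2m)$, and with such choices $F$ need not even be Lipschitz. The same oversight appears in Step~2: the distance to the chord inside a block is governed by these partial sums, not by $\max_i\norm{y_i}$. There it is harmless, because $m$ is chosen after the decomposition.

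Producing convex decompositions with controlled consecutive partial sums and a \emph{prescribed} endpoint is the real content of Kalton's theorem. It is where triviality of $X^*$ must be used quantitatively. Compare Lemma~\ref{lem:small-scale-curve}: there, control of all increments is obtained by restricting a polygon to a subinterval on which its Lipschitz constant is attained. That is exactly why the endpoint cannot be prescribed there, and why that lemma needs only non-local convexity.

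Suppose you had a lemma giving, for some fixed $\rho<1$ and $C<\infty$ and every $v\in X$, a polygonal path from $0$ to $v$ with slopes of norm at most $\rho\norm{v}$ and Lipschitz constant at most $C\norm{v}$. Then your Steps~2--4 would go through, since $F_n$ would be $O(\varepsilon_{n-1})$-Lipschitz at all scales below $h_{n-1}$. That lemma, however, is the missing idea, not bookkeeping.
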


Thus, even when differentiation exists, the derivative need no longer determine the curve. The situation appears at first sight substantially more promising for spaces with the point separation property, that is, quasi-Banach spaces $X$ such that for every $x\in X$ there is $x^*$ in the dual space $X^*$ with $x^*(x)\not =0$. Classical examples are the sequence spaces $\ell_p$ for $0<p<1$. Their canonical basis is boundedly complete, while their Banach envelope is $\ell_1$, a space with the RNP. These facts provide rather compelling evidence that $\ell_p$ might retain at least part of the classical differentiability theory, a question that was investigated in \cite{AlbiacAnso2016} and where the following result was proved. Let us recall some terminology that we will use.

A map $F\tp I\to X$ is \emph{weakly differentiable} at $t\in I$ if there is $x\in X$ such that for every $x^*\in X^*$, the scalar function $x^*\circ F$ is differentiable at $t$, and $(x^*\circ F)^\prime=x^*(x)$. Needless to say, (strong) differentiability implies weak differentiability. We say that a quasi-Banach space has the \emph{weak Radon--Nikod\'ym property} (WRNP) is every Lipschitz curve $F\tp I \to X$ is almost everywhere weakly differentiable. Finally, throughout this paper, the word \emph{basis} will always refer to a Schauder basis.

\begin{theorem}[see \cite{AlbiacAnso2016}*{Theorem 4.5}]\label{thm:AA}
Let $X$ be a quasi-Banach space with a boundedly complete basis $(e_n)_{n=1}^\infty$. Let $\widehat{X}$ the Banach envelope of $X$, and $J_X\tp X\to \widehat{X}$ be the envelope map. Then $X$ has the WRNP. Moreover, if $f\tp I \to X$ is the a.e.\@ weak derivate of a Lipschitz curve $F\tp I\to X$, and
$(F_n)_{n=1}^\infty$ are the coordinate functions of $F$, i.e,
\[
F(t)=\sum_{n=1}^{\infty}F_n(t)e_n, \quad t\in I,
\]
then $f(s)=\sum_{n=1}^{\infty}F_n'(s) e_n$, and
\[
\lim_{t\to s} \norm{ J_X\enpar{\frac{F(t)-F(s)}{t-s} -f(s)}}_{\widehat{X}}=0.
\]
a.e\@ $s\in I$.
\end{theorem}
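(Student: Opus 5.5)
The plan is to pass to the Banach envelope and use the Radon--Nikod\'ym property there. The first observation is that the composition $G=J_X\circ F\tp I\to\widehat{X}$ is again Lipschitz, since $J_X$ is a bounded linear map. The image $(\widehat{e}_n)=(J_X(e_n))$ of a boundedly complete basis should again be a boundedly complete basis of $\widehat{X}$, at least up to equivalence with the envelope norm. My first task would be to verify this. Partial sums of $J_X x$ are $J_X$ of partial sums of $x$, so the basis constant passes to $\widehat X$ by convexification of the projections. For bounded completeness, I would show that $\widehat{X}$ is a dual space with $(\widehat e_n)$ weak$^*$-basis-like. Concretely, $\widehat X^*=X^*$, and the biorthogonal functionals span a subspace $Z\subset X^*$ whose dual should be $\widehat X$; bounded completeness of $(e_n)$ in $X$ gives that every bounded block sequence in $\widehat X$ converges. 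I expect this to be the main obstacle, since bounded completeness in $X$ refers to the quasi-norm, while in $\widehat X$ only convex combinations are controlled.

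A cleaner route, which I would try first, is to realize $\widehat X$ as a separable dual: $\widehat X\cong Z^*$ isometrically, where $Z=\overline{\operatorname{span}}\{e_n^*\}\subset X^*$. The argument would use the bounded completeness of $(e_n)$, which makes $X$ "locally" a dual of the shrinking-type system $(e_n^*)$: the unit ball of $X$, closed under coordinatewise limits, has a weak$^*$-compact convex hull.

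Once $\widehat X$ is a separable dual space, it has the RNP (Dunford--Pettis). Then $G$ is differentiable almost everywhere in $\widehat X$, with derivative $g(s)$. Applying the coordinate functionals $e_n^*$, which are continuous on $\widehat X$, gives $e_n^*(g(s))=F_n'(s)$ at every point of differentiability of $G$. In particular, each $F_n$ is Lipschitz and differentiable a.e.

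Next I would show that $g(s)$ actually lies in $J_X(X)$, namely that $g(s)=J_X(f(s))$ with $f(s)=\sum_n F_n'(s)e_n$. The difference quotients $(F(t)-F(s))/(t-s)$ are bounded in $X$ by the Lipschitz constant. Their partial sums are therefore uniformly bounded, and letting $t\to s$ coordinatewise, the partial sums $\sum_{n\le N}F_n'(s)e_n$ stay bounded in $X$ by a constant depending only on the basis constant and $\operatorname{Lip}(F)$. Bounded completeness then yields convergence of the series in $X$, so $f(s)\in X$ and $J_X f(s)$ has the same coordinates as $g(s)$. Hence $J_Xf(s)=g(s)$, which is exactly the displayed envelope limit.

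Finally, weak differentiability follows because $X^*=\widehat X^*$ and each $x^*\in X^*$ factors as $\widehat x^*\circ J_X$. Thus
\[
(x^*\circ F)'(s)=\widehat x^*(g(s))=x^*(f(s)),
\]
which gives the WRNP.
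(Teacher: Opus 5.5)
\textbf{There is a genuine gap: the step you flag as the main obstacle is never actually proved, and it is the nontrivial one.}

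Everything after ``$\widehat X$ has the RNP'' is sound. This includes the existence of $f(s)=\sum_n F_n'(s)e_n$ in $X$ via bounded completeness, the identification $g(s)=J_X(f(s))$ by comparing coordinates in the basis $(J_X(e_n))_{n=1}^\infty$ of $\widehat X$, and weak differentiability off a single null set via $x^*=\widehat{x^*}\circ J_X$.

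The missing step is that $(J_X(e_n))_{n=1}^\infty$ is boundedly complete in $\widehat X$. Equivalently, the natural map $\widehat X\to Z^*$ must be onto; it is an isomorphism onto its image with constant the basis constant, not an isometry in general. Your justification is that $B(\Et)$ is closed under coordinatewise limits, so its closed convex hull $C$ is weak$^*$-compact. That only gives compactness of $C$ in $\FF^\NN$. It does not show that each point of $C$ is the coefficient sequence of an element of $\widehat X$, i.e.\ that $C\subset B(\widehat{\Et})$, which is exactly what bounded completeness or surjectivity onto $Z^*$ means. Bounded completeness in $X$ controls the quasi-norm ball, not limits of convex combinations of its elements, so a further idea is needed.

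In the paper this is Theorem~\ref{thm:bounded-completeness-envelope}, proved as follows.
\begin{enumerate}[label=(\roman*),leftmargin=*]
\item Renorm so the quasi-norm is a continuous $p$-norm and $\Et$ is monotone; then $B(\Et)$ is compact metrizable.
\item Every $\alpha\in C$ is the barycenter of a regular Borel probability measure $\mu$ on $B(\Et)$.
\item The map $J_X\circ\St[\Et]$ is a pointwise norm limit of continuous maps, hence Bochner integrable, and $\int_{B(\Et)} J_X\circ\St[\Et]\,d\mu$ is an element of $B_{\widehat X}$ with coefficients $\alpha$.
\item The reverse inclusion $B(\widehat{\Et})\subset C$ comes from $B_{\widehat X}=\cocl(J_X(B_X))$.
\end{enumerate}
With that theorem supplied, your outline becomes a complete proof. (Minor: ``every bounded block sequence in $\widehat X$ converges'' is not the right formulation of bounded completeness.)

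\textbf{You could avoid the RNP of $\widehat X$ altogether.} Your third paragraph does not really use the differentiability of $G$. Each $F_n=e_n^*\circ F$ is a scalar Lipschitz function, so for a.e.\ $s$ all $F_n'(s)$ exist. Bounded completeness then gives $f(s)\in X$ with $\sup_s\norm{f(s)}_X<\infty$.

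The map $J_X\circ f$ is bounded and is the a.e.\ norm limit of $\sum_{n\le N}F_n'(\cdot)\,J_X(e_n)$. Hence it is strongly measurable and Bochner integrable on bounded intervals in the Banach space $\widehat X$. Comparing coordinates, using that the biorthogonal functionals of $\widehat{\Et}$ separate points, gives $J_X(F(t))-J_X(F(s))=\int_s^t J_X(f(u))\,du$. The Lebesgue differentiation theorem for Bochner integrals then yields the displayed envelope limit at a.e.\ $s$, and weak differentiability follows as in your last paragraph.

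This is the Dunford--Morse argument run in $\widehat X$, where Bochner integration is available even though it is not in $X$. The paper quotes the theorem without proof, but its remarks on Dunford--Morse and Bochner integration point to this route. It bypasses Theorem~\ref{thm:bounded-completeness-envelope} entirely.
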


Applying Theorem~\ref{thm:AA} with $X=\ell_p$, $0<p<1$, gives that, for every Lipschitz map $F\tp I \to \ell_p$ there is $f\tp I \to \ell_p$ such that
\[
\lim_{t\to s} \norm{\frac{F(t)-F(s)}{t-s} -f(s)}_1=0.
\]
What could not be concluded was that
\[
\lim_{t\to s}\norm{\frac{F(t)-F(s)}{t-s} -f(s)}_p=0
\]
even at a single point $s\in I$. In the Banach-space argument of Dunford and Morse, the corresponding step in the proof ultimately rests upon Bochner integration, and no analogous mechanism is available in the nonlocally convex setting (see \cites{AlbiacAnsorenaJFA1, AlbiacAnsorenaJFA2}). The problem is therefore not simply one of identifying the candidate derivative: in spaces such as $\ell_p$ the candidate already exists almost everywhere. The real issue is whether the difference quotients converge to it in the original quasi-Banach topology.

These digressions naturally lead to the extension of Tamarkin's question to quasi-Banach spaces. Although this question was implicit in the literature, it was explicitly raised in \cite{AlbiacAnso2016}*{Problem~4.1}.

\begin{questionintro}[Tamarkin's question for quasi-Banach spaces]
Which quasi-Banach spaces $X$ have the property that every Lipschitz curve $F\tp\RR\to X$
is differentiable almost everywhere?
\end{questionintro}

One may ask an apparently much weaker question. Namely, which quasi-Banach spaces have the property that every Lipschitz curve possesses at least one point of differentiability? One of the main consequences of our results is that these two properties are, in fact, equivalent, and that they characterize precisely those quasi-Banach spaces which are isomorphic to Banach spaces with the RNP.

\begin{theoremintro}\label{thm:RNP-characterization}
Let $X$ be a quasi-Banach space. The following conditions are equivalent.
\begin{enumerate}[label=(\roman*),leftmargin=*,widest=iii]
\item\label{it:RNP1} Every Lipschitz map $F\tp\RR\to X$ is differentiable almost everywhere.
\item\label{it:RNP2} Every Lipschitz map $F\tp\RR\to X$ is differentiable at some point.
\item\label{it:RNP3} $X$ is locally convex and has the RNP.
\end{enumerate}
\end{theoremintro}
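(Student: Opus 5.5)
The implications \ref{it:RNP3}$\Rightarrow$\ref{it:RNP1}$\Rightarrow$\ref{it:RNP2} are routine. For the first, if $X$ is locally convex, its quasi-norm is equivalent to a norm. Lipschitz maps and strong differentiability are unchanged under equivalent quasi-norms, so the classical characterization of the RNP through a.e.\@ differentiability of Lipschitz curves applies directly. The second is trivial, since a Lipschitz curve on $\RR$ that is differentiable almost everywhere is differentiable somewhere. All the work is in \ref{it:RNP2}$\Rightarrow$\ref{it:RNP3}, and I split it in two. First, \ref{it:RNP2} forces $X$ to be locally convex. Once this is known, $X$ is a Banach space up to renorming, and \ref{it:RNP2} for Banach spaces already gives the RNP. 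Indeed, a Banach space failing the RNP contains a bounded $\delta$-separated dyadic tree, i.e.\@ a non-dentable set. From such a tree the classical construction produces a Lipschitz curve with no point of differentiability: one builds a $\delta$-tree martingale and takes $F(t)=\int_0^t f$ for a suitably arranged step-function martingale limit. Equivalently, the non-RNP Lipschitz curve can be made nowhere differentiable by the standard argument. I will quote this fact from the Banach space literature.

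The core step is therefore: if $X$ is not locally convex, build a Lipschitz $F\colon\RR\to X$ that is differentiable nowhere. I will work with a $p$-norm, by Aoki--Rolewicz, and a quantitative measure of non-convexity. Failure of local convexity means that for every $N$ there are vectors $x_1,\dots,x_n$ with $\norm{x_j}\le 1$ and $\norm{\frac1n\sum_j x_j}$ bounded below, yet not all of them bounded by the average in a convex way. More usefully, there are finite sets with $\norm{x_j}\le1$ whose convex combinations have quasi-norm as large as $N$, or, after rescaling, whose averages are tiny compared with the individual pieces. The model to imitate is \eqref{eq:LipLp}. There, a curve of the form $t\mapsto\sum_{j} \chi_{[j/n,t]}$-type increments in disjoint directions has quasi-norm increments comparable to the length, while the difference quotients oscillate.

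Concretely, I aim for an infinitesimal-oscillation lemma. There is $c>0$ such that for every interval $J$ of length $h$ and every target vector $v$, one can add to any Lipschitz curve a perturbation $G$ with the following properties: $G$ has Lipschitz constant at most $1$; $G$ vanishes at the endpoints of $J$ and outside $J$; $\norm{G}_\infty\le\varepsilon h$; and at every $s\in J$ there are $t$ arbitrarily close to $s$ (at scale $\gtrsim$ a fixed fraction of the subinterval lengths) with $\norm{\frac{G(t)-G(s)}{t-s}-v}\ge c$. The plan is to take the non-convex vectors $x_1,\dots,x_n$ and move at unit speed successively along $\pm x_j$ on tiny subintervals. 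The non-convexity makes the net displacement over $J$ small, so $G$ nearly returns to $0$ and is small in sup-norm, yet each local slope is some $x_j$ of size about $1$. Since no single vector is simultaneously close to all the slopes $\pm x_j$, the difference quotients cannot converge. I will then sum such perturbations over a nested sequence of dyadic scales $h_k\downarrow0$, with rapidly decreasing sup-norm sizes $\varepsilon_k h_k$ and Lipschitz constants $L_k$ summable in the $p$-norm sense, $\sum L_k^p<\infty$, so that the total curve is Lipschitz. At each point $s$ and each scale $k$, the $k$-th layer produces an oscillation of size about $L_k$ at scale $h_k$. Meanwhile the finer layers contribute at most $\sum_{j>k}\varepsilon_j h_j/h_k$ to that quotient, and the coarser layers are nearly affine at that scale, so they cannot cancel the oscillation.

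The main obstacle is exactly this competition. The coarser layers are Lipschitz but themselves not differentiable, so I cannot treat them as affine at scale $h_k$. I would handle this with the $p$-triangle inequality together with a pigeonhole over the two comparison points $t_1,t_2$ near $s$. Either the coarser part varies little relative to $L_k$ between $t_1$ and $t_2$, which I arrange by choosing $L_k$ decaying slowly, e.g.\@ $L_k\sim k^{-2/p}$ while the oscillation constant $c$ stays fixed relative to $L_k$, or I pass to a subsequence of scales. The cleanest route is to make the oscillation in layer $k$ large compared with $\sup_{j<k}$ of the local non-affinity of earlier layers at scale $h_k$. This is possible because each earlier layer is piecewise linear with finitely many pieces, hence affine on most subintervals at scale $h_k$ once $h_k$ is small enough. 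The few points near breakpoints are handled by a Borel--Cantelli-free argument: the breakpoints of layer $j$ are finite, and I choose the grid of layer $k$ to avoid them.
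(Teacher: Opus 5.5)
Your outer reductions are fine and agree with the paper. The implications (iii)$\Rightarrow$(i)$\Rightarrow$(ii) are routine, and once local convexity is established the classical Banach-space characterization (the paper quotes \cite{BenLin}*{Theorem~5.21}) yields the RNP. The gap is in the core step, and as designed it cannot work. Your building block uses non-convexity backwards. A unit-speed zigzag with small net displacement (along $x$, then along $-x$) exists in every normed space, even in $\RR$ (a sawtooth), and no single $v$ is close to both slopes $\pm x$. So nothing in the block detects the failure of local convexity. Worse, summing layers $G_k$ with $\Lip(G_k)\le L_k$ and $\sum_k L_k^p<\infty$ yields a curve that is differentiable at every $s$ outside the countable set of breakpoints of your piecewise linear layers. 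For such $s$,
\[
\norm{\frac{F(t)-F(s)}{t-s}-\sum_{k} G_k'(s)}^p\le\sum_{k}\norm{\frac{G_k(t)-G_k(s)}{t-s}-G_k'(s)}^p,
\]
where each summand is at most $2L_k^p$ and tends to $0$ as $t\to s$. The dominated-convergence argument from the proof of Theorem~\ref{thm:decoupled-ae-diff} then gives $F'(s)=\sum_k G_k'(s)$. Put differently, layer $k$ creates an oscillation of size only about $cL_k\to0$ at scale $h_k$, which is perfectly compatible with differentiability. The competition between layers that you single out as the main obstacle is therefore beside the point. (With $p=1$ your scheme would run verbatim in $\RR$, contradicting Lebesgue's theorem.)

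The missing idea is the opposite use of non-convexity. Vectors $x_i\in B_X$ and convex weights $\lambda_i$ with $\norm{\sum_i\lambda_i x_i}_X$ large give, after rescaling, a curve with $\Lip(F)=1$ attained at the large scale but with $\norm{F(t)-F(s)}_X\le\varepsilon\abs{t-s}$ whenever $\abs{t-s}\le r$. Such curves exist exactly when $X$ is not locally convex (Lemma~\ref{lem:small-scale-curve}). The paper periodizes them into blocks $F_j$ with $\Lip(F_j)=1$ (not summable), $\norm{F_j}_\infty=\varepsilon_j$, speed at most $\varepsilon_j$ below scale $r_j$, and $\varepsilon_j=\alpha r_{j-1}$, and sets $F=\sum_j F_j$. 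When $r_k\le\abs{t-s}\le r_{k-1}$, the coarse layers $j<k$ move at geometrically small speeds, only layer $k$ moves at speed up to $1$, and the fine layers $j>k$ are geometrically small in sup-norm. Exponential galbedness \eqref{eq:ExpGalb}, with $C_\alpha\to0$ by Lemma~\ref{lem:ExpGalb}, makes $F$ Lipschitz. It also gives, at every $s$ and every $k$, $\norm{F(t)-F(s)}_X\le\eta r_k$ for $\abs{t-s}\le r_k$, while $\norm{F(t)-F(s)}_X\ge(1-\eta)\varepsilon_k/(2\kappa^3)$ for some $t$ with $\abs{t-s}\le\varepsilon_k$ (Proposition~\ref{prop:multiscale-curve}). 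So the \emph{sizes} of the difference quotients oscillate between $\eta$ and a fixed positive constant at every point. After passing to an equivalent continuous quasi-norm, this excludes differentiability anywhere (Theorem~\ref{thm:local-convexity}). The obstruction lies in the magnitude of the quotients at two interlaced families of scales, not in the direction of the slopes, and your core construction has to be rebuilt around it.
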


Perhaps surprisingly, although the original differentiability formulation of the Radon--Nikod\'ym property admits an exact extension to the quasi-Banach category, this extension yields no nonlocally convex examples. The existence of merely one point of differentiability for every Lipschitz curve already forces the underlying space to be locally convex, and, after renorming, what remains is exactly the classical Banach-space RNP. This result is consistent with the theme already hinted at in \cite{AlbiacJOCA} that seemingly natural Banach-space properties of Lipschitz maps often force local convexity when transplanted to quasi-Banach spaces.

Theorem~\ref{thm:RNP-characterization} follows from a more quantitative result which identifies local convexity directly through the infinitesimal oscillation of Lipschitz curves. For a Lipschitz map $\Phi\tp M\to N$ between quasi-metric spaces and a limit point $y\in M$, put
\[
\lip(\Phi;y)=\liminf_{r\to 0^+} \sup_{x\in B(y,r)} \frac{d_N(\Phi(x),\Phi(y))}{r}
\]
and
\[
\Lip(\Phi;y)=\limsup_{r\to 0^+} \sup_{x\in B(y,r)} \frac{d_N(\Phi(x),\Phi(y))}{r}.
\]
These quantities measure, respectively, the lower and upper infinitesimal Lipschitz behavior of $\Phi$ at $y$. Our underlying geometric result is the following characterization of local convexity.

\begin{theoremintro}\label{thm:local-convexity}
Given a quasi-Banach space $X$, the following conditions are equivalent.
\begin{enumerate}[label=(\roman*),leftmargin=*,widest=iii]
\item\label{it:TLC1} $X$ is locally convex.
\item\label{it:TLC2} There exists $0<\delta\le 1$ such that for every Lipschitz map $F\tp\RR \to X$ there is $s\in\RR$ such that
\[
\delta \Lip(F;s)\leq \lip(F;s).
\]
\item\label{it:TLC4} There is $0<\delta\le 1$ such that for every Lipschitz map
$F\tp\RR \to X$ there is $s\in\RR$ such that
\[
\delta\limsup_{t\to s}\frac{\| F(t)-F(s)\|}{|t-s|}\leq\liminf_{t\to s} \frac{\|F(t)-F(s)\|}{|t-s|}.
\]
\item\label{it:TLC3} There is $0<\delta \le 1$ such that for every Lipschitz map $F\tp\RR\to X$,
\[
\delta\limsup_{t\to s}\frac{\| F(t)-F(s)\|}{|t-s|}\leq\liminf_{t\to s}\frac{\|F(t)-F(s)\|}{|t-s|}
\]
for
almost every $s\in\RR$.
\end{enumerate}
\end{theoremintro}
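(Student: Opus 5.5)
The plan is to prove \ref{it:TLC1}$\Rightarrow$\ref{it:TLC3}$\Rightarrow$\ref{it:TLC4}$\Rightarrow$\ref{it:TLC2}$\Rightarrow$\ref{it:TLC1}. The first three implications are soft; the last one is where a construction is needed.

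For \ref{it:TLC1}$\Rightarrow$\ref{it:TLC3}, assume $X$ is locally convex. Then $X$ carries a norm $\vert\cdot\vert$ with $c\vert x\vert\le\Vert x\Vert\le C\vert x\vert$. A Lipschitz curve $F\tp\RR\to(X,\vert\cdot\vert)$ is a Lipschitz curve into a metric space. By the metric differentiability theorem for Lipschitz curves (Kirchheim; Ambrosio), the limit
\[
\md(F,s)=\lim_{t\to s}\frac{\vert F(t)-F(s)\vert}{|t-s|}
\]
exists for a.e.\ $s$. Passing back to $\Vert\cdot\Vert$, at each such $s$ the $\limsup$ of the quasi-norm ratio is at most $C\md(F,s)$ and the $\liminf$ is at least $c\md(F,s)$. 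Hence \ref{it:TLC3} holds with $\delta=c/C$.

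The implication \ref{it:TLC3}$\Rightarrow$\ref{it:TLC4} is trivial, since a full-measure set is nonempty. For \ref{it:TLC4}$\Rightarrow$\ref{it:TLC2} I will compare the two families of quantities at a fixed $s$:
\begin{itemize}
\item First, $\Lip(F;s)=\limsup_{t\to s}\Vert F(t)-F(s)\Vert/|t-s|$. Indeed, every $x\in B(s,r)$ satisfies $|x-s|<r$, which gives $\le$. Points with $|x-s|$ close to $r$ give $\ge$.
\item Second, $\lip(F;s)\ge\liminf_{t\to s}\Vert F(t)-F(s)\Vert/|t-s|$. This follows by testing the supremum over $B(s,r)$ at points $x=s+r'$ with $r'\uparrow r$, using continuity of $F$ and of the quasi-norm.
\end{itemize}
So \ref{it:TLC4} yields \ref{it:TLC2} with the same $\delta$.

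The core is \ref{it:TLC2}$\Rightarrow$\ref{it:TLC1}, which I would prove in contrapositive form. Assume $X$ is not locally convex and fix $\delta>0$. I will build a Lipschitz $F$ with $\lip(F;s)<\delta\Lip(F;s)$ for every $s$.

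By the Aoki--Rolewicz theorem, assume $\Vert\cdot\Vert$ is a $p$-norm for some $0<p\le1$. Non-local convexity means the convex hull of the unit ball is unbounded. Hence for each $M$ there are $x_1,\dots,x_n$ with $\Vert x_j\Vert\le1$ and $\Vert u\Vert\ge M$, where $u=\frac1n\sum_j x_j$. Rescaling gives a basic loop $G$ on a unit time interval:
\begin{itemize}
\item first run along the straight segment from $0$ to $u/M$ at speed about $1$;
\item then return along the polygon with steps $-x_j/(Mn)$, whose speed is only about $1/M$.
\end{itemize}
Iterating this substitution gives a self-similar curve. At every level $k$, each straight piece, which moves at full speed, is replaced by a scaled copy of $G$, with period $N^{-k}$ and $N$ large. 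Then every $s$ lies, at every scale $N^{-k}$, in a piece of the curve that nearly returns to where it started, so displacement divided by radius is small there. On the other hand, at suitable intermediate scales inside the straight part, displacement is comparable to the radius. This should give $\lip(F;s)\lesssim\delta\,\Lip(F;s)$ with $\Lip(F;s)\gtrsim1$.

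The main obstacle is keeping the limit curve uniformly Lipschitz while preserving these estimates at every point, not just at most points. The $p$-subadditivity $\Vert\sum y_i\Vert^p\le\sum\Vert y_i\Vert^p$ controls the contributions of finer levels. I will make them geometrically small by choosing the ratio $M$ and the scale separation $N$ large depending on $\delta$. At a given scale, the coarser levels behave essentially linearly, and they must not spoil the near-return. My first attempt at this is to ensure that, on the subintervals where level $k$ is in its slow return phase, the coarser levels are themselves in a slow phase. Since $\delta$ is fixed, it is enough to beat $\delta$ at every point, not to drive the ratio to $0$. This allows choosing the parameters once, uniformly in the level.
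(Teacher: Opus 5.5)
Your three soft implications match the paper's: (i)$\Rightarrow$(iv) by Kirchheim's theorem for an equivalent norm with $\delta=c/C$; (iv)$\Rightarrow$(iii) trivially; and (iii)$\Rightarrow$(ii) by comparing $\Lip(F;s)$ and $\lip(F;s)$ with the upper and lower limits of the difference quotients. The gap is in (ii)$\Rightarrow$(i). Your construction does not give $\lip(F;s)<\delta\Lip(F;s)$ at every $s$, for two reasons.

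\emph{First}, a piece of curve that ``nearly returns to where it started'' does not have a small ratio in the sense of $\lip$. The quantity $\sup_{x\in B(s,r)}\norm{F(x)-F(s)}_X/r$ sees every point of the ball. A loop of period about $r$ that travels a distance comparable to $r$ and comes back therefore keeps this ratio bounded below at that scale. What you need is a scale $r$ at which $F$ is uniformly slow on all of $B(s,r)$.

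\emph{Second}, your loop $G$ has a straight phase run at speed about $1$, and that speed persists at every finer scale. If $s$ is inside such a phase of level $j$, then $F_j(t)-F_j(s)=(t-s)w$ with $\norm{w}_X\approx1$ for all small $|t-s|$. Nothing in your construction cancels this linear motion: finer levels have sup-norm $\ll r$ at scale $r$, and coarser levels move either slowly along the $x_i$ or fast along the same direction $u$. So $\lip(F;s)$ is comparable to $\Lip(F;s)$ on a set of positive measure.

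This cannot be repaired by tuning $M$ and $N$. If the levels keep comparable speeds, the fast phases pile up and $F$ is not Lipschitz. If their speeds decay, the coarse linear motion dominates every fine scale. Also, ``replacing'' a straight piece with distinct endpoints by a closed loop is not a well-defined substitution. Finally, your hope that coarser levels are slow whenever level $k$ is slow cannot hold at all $s$.

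The missing idea is a building block that is slow at \emph{all} sufficiently fine scales \emph{everywhere}, yet has Lipschitz constant $1$ at the scale of its period. This is Lemma~\ref{lem:small-scale-curve}. Take the polygonal curve with velocity $x_i$ on an interval of length $\lambda_i$, where $\norm{x_i}_X\le1$, $\sum_i\lambda_i=1$ and $\norm{\sum_i\lambda_ix_i}_X\ge\kappa/\varepsilon$. Restrict it to a subinterval where its Lipschitz constant $L$ is attained, and divide by $L$. The result $F_0$ has $\Lip(F_0)=1$ and net displacement $1$, but satisfies $\norm{F_0(t)-F_0(s)}_X\le\varepsilon|t-s|$ whenever $|t-s|\le r$.

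Lemma~\ref{lem:periodic-building-block} periodizes $F_0$ by even reflection, so the slow polygon is run back and forth and there is no fast phase at all. Your slow return polygon is the right object; the fast outgoing segment has to go.

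Proposition~\ref{prop:multiscale-curve} then superimposes blocks $F_j$ at scales $\varepsilon_j=\alpha r_{j-1}$, with $r_j<\varepsilon_j$:
\begin{itemize}
\item For $|t-s|\le r_k$, all levels $j\le k$ are in their slow regime, so their sum has increments of quasi-norm at most $C_\alpha(X)|t-s|$.
\item The tail $\sum_{j>k}F_j$ has sup-norm at most $C_\alpha(X)r_k$, and $C_\alpha(X)\to0$ by exponential galbedness (Lemma~\ref{lem:ExpGalb}). Together these give ratio at most $3\kappa_3(X)C_\alpha(X)$ at scale $r_k$, for every $s$.
\item At scale $\varepsilon_k$, the oscillation of $F_k$, of order $\varepsilon_k/\kappa$, dominates everything else.
\item For $r_k\le|t-s|\le r_{k-1}$, only $F_k$ moves at unit speed, which yields the Lipschitz bound.
\end{itemize}
Because every level is slow below its own small scale, no alignment of phases across levels is ever needed.
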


The quantitative content of Theorem~\ref{thm:local-convexity} is worth emphasizing. If $X$ is not locally convex, then, for every $0<\delta<1$, there exists a Lipschitz curve $F\tp\RR\to X$ such that
\[
\lip(F;t)\leq\delta\Lip(F;t), \qquad t\in\RR.
\]
In fact, the construction yields two sequences of scales decreasing to zero at which the normalized oscillations of $F$ remain uniformly separated. Thus the failure of differentiability does not arise from an exceptional set or from isolated irregularities: incompatible first-order behavior is built into the curve at every point and at arbitrarily small scales.

Our results also clarify the role of differentiability in nonlinear classification. In Banach-space theory, the RNP supplies a general mechanism for obtaining linear tangents to Lipschitz mappings. Theorem~\ref{thm:RNP-characterization} shows that no analogous target-space principle based on differentiability can exist in a genuinely nonlocally convex quasi-Banach space. A particular Lipschitz or bi-Lipschitz mapping may, of course, be differentiable, and whenever it is, its derivative remains an effective linearization tool. What fails is the structural guarantee that a point of differentiation must exist. Nonlinear classification in the nonlocally convex category must therefore either impose additional hypotheses on the mappings under consideration or make use of different linearization procedures.

Theorem~\ref{thm:RNP-characterization} also places earlier attempts to extend the RNP to quasi-Banach spaces in perspective. Kalton introduced in \cite{KaltonRNP} the notion of $p$-triviality, motivated by operator-theoretic properties of $L_p$ for $p<1$, while other approaches have emphasized vector measures, martingales, weak differentiation, or alternative notions of integration. While these notions capture meaningful phenomena specific to nonlocally convex spaces, the original analytic formulation of the RNP brings the problem back exactly to the locally convex setting. Moreover, as we will show, the fact that the Banach envelope has the RNP does not guarantee the existence of weak derivatives of Lipschitz maps.

Theorem~\ref{thm:RNP-characterization} settles the general problem of differentiability in a decisive way: outside the locally convex setting there can be no universal differentiability theorem for Lipschitz curves, not even one guaranteeing a single point of differentiability. This negative conclusion makes it natural to turn to a more refined question, namely, which additional assumptions on a Lipschitz curve with values in a nonlocally convex quasi-Banach space ensure that differentiability is nevertheless recovered almost everywhere. The spaces $\ell_p$, $0<p<1$, provide a particularly natural setting in which to pursue this question. By Theorem~\ref{thm:AA}, every Lipschitz curve $F\tp I\to\ell_p$ has a well-defined weak derivative almost everywhere. Thus, after Theorem~\ref{thm:RNP-characterization}, the relevant problem is no longer whether differentiability must always occur, but rather to identify natural classes of curves for which the almost-everywhere weak derivative is automatically promoted to a strong derivative. This motivates the study of decoupled-coordinate mappings into $\ell_p$, $0<p<1$, of the form
\[
F(t)=\sum_{n=1}^{\infty}a_n\, \phi(b_nt)\,e_n, \qquad t\in\RR,
\]
where $\phi\tp\RR\to\RR$ is Lipschitz and the parameters $a_n$ and $b_n$ determine the size and frequency of the individual coordinates. Such curves constitute a natural and flexible class of highly oscillatory mappings. Before obtaining Theorem~\ref{thm:RNP-characterization}, this class of curves arose as plausible candidates for constructing nowhere differentiable Lipschitz curves into $\ell_p$. While Theorem~\ref{thm:RNP-characterization} shows that nowhere differentiable curves certainly exist, the point is instead to show that they cannot, in general, be obtained from the most natural decoupled-coordinate constructions. Thus, although the geometry of $\ell_p$ permits Lipschitz curves with no points of differentiability, the pathology necessarily involves a more intricate interaction between coordinates than that present in this standard oscillatory model.

There is a further distinction which is peculiar to the nonlocally convex setting. If $X$ is a $p$-Banach space, $0<p<1$, then the mapping
\begin{equation*}
(x,y) \mapsto \Vert x-y\Vert_X^p
\end{equation*}
defines a
translation-invariant metric compatible with its topology. Lipschitz regularity with respect to this metric, however, is fundamentally different from that with respect to the quasi-norm Lipschitz condition considered above. In the case where $X=\ell_p$ or, more generally, $X$ is a $p$-Banach space with the point separation property, Lipschitz curves relative to the metric $\norm{\cdot}_X^p$ are constant. This provides a sharp contrast with quasi-norm differentiability, where nonconstant Lipschitz curves abound and the existence of derivatives reflects subtle features of the underlying linear geometry.

The paper is organized as follows. In Section~\ref{sec:Background} we recall the necessary background on quasi-Banach spaces, Banach envelopes, and weak, strong, and metric differentiability of Lipschitz curves. We also derive some useful sufficient conditions for almost-everywhere differentiability in $\ell_p$. Section~\ref{sec:Tamarkin} contains the main results of the paper. In it, we construct, in every nonlocally convex quasi-Banach space, Lipschitz curves with incompatible infinitesimal behavior at arbitrarily small scales, prove the characterization of local convexity in Theorem~\ref{thm:local-convexity}, and deduce Theorem~\ref{thm:RNP-characterization}, thereby settling Tamarkin's problem in the quasi-Banach category. In Section~\ref{sec:new}, we revisit the results from \cite{AlbiacAnso2016} concerning the connection between differentiability of Lipschitz curves and Vogt integrability, and advance the state-of-art of the subject by characterizing Vogt integrable functions taking values in $\ell_p$. This characterization delimits the scope of Vogt integrability for constructing Lipschitz functions into $\ell_p$. In Section~\ref{sec:weakRNP}, we provide examples that illustrate the obstructions that can arise when we attempt to transfer RNP-like properties from the Banach envelope of a quasi-Banach space to the space itself. In Section~\ref{sec:decoupled} we return to $\ell_p$, $0<p<1$, and analyze decoupled-coordinate constructions, showing that a broad class of such Lipschitz mappings is nevertheless differentiable almost everywhere. Finally, Section~\ref{sec:metric} compares the preceding theory with metric differentiability relative to the snowflaking $\norm{\cdot}_X^p$, and proves the rigidity of Lipschitz curves in this metric setting.
\section{Preliminaries}\label{sec:Background}\noindent
For background on quasi-Banach spaces we refer the reader to \cite{KPR1985}. Let us recall first that a \emph{quasi-norm} on a vector space $X$ over the real or complex field $\FF$ is map $\norm{ \cdot}_X\tp X\to [0, \infty)$ with the following properties.
\begin{enumerate}[label=(\roman*)]
\item\label{QN:a} $\norm{ x}_X >0$ for all $x\not=0$,
\item\label{QN:b} $\norm{\alpha\, x}_X=\abs{\alpha} \norm {x}_X$ for all $\alpha\in\RR$ and all $x\in X$.
\item\label{QN:c} There is a constant $\kappa\ge 1$ so that for all $x$ and $y\in X$ we have
\begin{equation*}
\norm{x+y}_X \le \kappa\enpar{\norm{x}_X +\norm{ y}_X}.
\end{equation*}
\end{enumerate}
The smallest constant $\kappa$ in \ref{QN:c} is called the \emph{modulus of concavity} of the quasi-norm. In general, for $m\in\NN$, we denote by $\kappa_m(X)$ the smallest constant $C$ such that
\begin{equation}\label{kappam}
\norm{\sum_{j=1}^m x_j}_X \le C \sum_{j=1}^m \norm{x_j}_X, \quad x_j\in X.
\end{equation}

A quasi-norm $\norm{\cdot}_X$ induces a metric linear topology on $X$. We call these topological vector spaces \emph{quasi-normed spaces} (normed if $\kappa=1$) or, in the case when they are complete, \emph{quasi-Banach spaces} (Banach spaces if $\kappa=1$). The symbol $B_X$ stands for the closed unit ball of $X$.

Given $0<p\le 1$, the quasi-norm $\norm{\cdot}_X$ is said to be a \emph{$p$-norm} if it satisfies \ref{QN:a}, \ref{QN:b} and it is $p$-subadditive, i.e.,
\begin{enumerate}[label=(\roman*),resume]
\item\label{QN:d} $\norm{x+y}_X^p\le \norm{ x}_X^p +\norm{y}_X^p$ for all $x$, $y\in X$.
\end{enumerate}

Of course, \ref{QN:d} implies \ref{QN:c} with $\kappa=2^{1/p-1}$. A $p$-normed (resp., $p$-Banach) space is a quasi-normed (resp., quasi-Banach) space equipped with a $p$-norm. Unless otherwise stated, we regard quasi-normed spaces as quasi-metric spaces equipped with the quasi-distance
\[
(x,y)\mapsto \norm{x-y}_X.
\]

A subset $B$ of a vector space $X$ is said to be absolutely $p$-convex if $\lambda x + \mu y\in B$ for all $x$, $y\in B$ and all $\lambda$, $\mu\in\RR$ with $\abs{\lambda}^p + \abs{\mu}^p \le 1$. We say that the quasi-normed space $X$ is locally $p$-convex (locally convex if $p=1$) if the origin has a basis of neighbourhoods consisting of absolutely $p$-convex sets. The following conditions on $X$ are equivalent.
\begin{enumerate}[label=(\alph*)]
\item $X$ is locally $p$-convex.
\item\label{it:Anso} There is a constant $C\in[1,\infty)$ such that
\[
\norm{\sum_{j\in J} x_j}_X \le C \enpar{\sum_{j\in J} \norm{x_j}_X^p}^{1/p}
\]
for all finite families $(x_j)_{j\in J}$ in $X$.
\item $X$ can be equipped with an equivalent $p$-norm, i.e., it is isomorphic to a $p$-Banach space.
\end{enumerate}

We call the optimal constant $C$ in \ref{it:Anso} the \emph{$p$-convexity constant} (convexity constant if $p=1$) of $X$. Equivalently, the $p$-convexity constant of $X$ is is the smallest constant $C$ such that
\[
\norm{\sum_{j\in J} \lambda_j x_j}_X \le C \enpar{\sum_{j\in J} \lambda_j^p}^{1/p} \sup_{j\in J} \norm{x_j}_X
\]
for all finite families $(x_j)_{j\in J}$ in $X$ and all $(\lambda_j)_{j\in J}$ in $[0,\infty)$.

Let us record the following obvious lemma for further reference.

\begin{lemma}\label{lem:Anso}
Let $X$ be a locally convex quasi-normed space with convexity constant $C$. Let $(t_j)_{j=0}^m$ be a a partition of an interval $[a,b]$. Then, given $F\tp [a,b] \to X$, we have
\[
\norm{F(b)-F(a)}_X \le C (b-a) \sup_{1\le j \le m} \norm{f(t_j)-f(t_{j-1})}.
\]
\end{lemma}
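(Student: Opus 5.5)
The plan is a telescoping argument combined with the second (weighted) description of the convexity constant $C$ given just before the lemma, taken with $p=1$. I read the supremum in the statement as the supremum of the normalized increments $\norm{F(t_j)-F(t_{j-1})}_X/(t_j-t_{j-1})$, with $f$ standing for $F$. This is the reading under which the inequality scales correctly with $b-a$, and it is the form in which the lemma gets applied to Lipschitz curves. I assume the partition is strictly increasing, $a=t_0<t_1<\dots<t_m=b$. Degenerate subintervals contribute a zero increment and can simply be discarded.

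First, write the increment of $F$ over $[a,b]$ as the telescoping sum
\[
F(b)-F(a)=\sum_{j=1}^m \bigl(F(t_j)-F(t_{j-1})\bigr)=\sum_{j=1}^m \lambda_j x_j,
\]
where $\lambda_j=t_j-t_{j-1}>0$ and $x_j=(F(t_j)-F(t_{j-1}))/\lambda_j$.

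Second, apply the equivalent formulation of the convexity constant with $p=1$:
\[
\norm{\sum_{j=1}^m \lambda_j x_j}_X\le C\enpar{\sum_{j=1}^m\lambda_j}\sup_{1\le j\le m}\norm{x_j}_X .
\]
Since $\sum_{j}\lambda_j=b-a$, this gives
\[
\norm{F(b)-F(a)}_X\le C(b-a)\sup_{1\le j\le m}\frac{\norm{F(t_j)-F(t_{j-1})}_X}{t_j-t_{j-1}},
\]
which is the claim.

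The only point needing any care is the equivalence between the two descriptions of $C$. If only the unweighted version \ref{it:Anso} is available, I would derive the weighted one directly. Applying \ref{it:Anso} with $p=1$ to the family $(\lambda_j x_j)_{j}$ gives
\[
\norm{\sum_{j}\lambda_j x_j}_X\le C\sum_{j}\lambda_j\norm{x_j}_X\le C\enpar{\sum_{j}\lambda_j}\sup_{j}\norm{x_j}_X .
\]
This is exactly the inequality used in the second step. So no genuine obstacle arises; the lemma is a direct rewriting of local convexity along a partition.
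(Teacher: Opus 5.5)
Your proof is correct. The telescoping decomposition $F(b)-F(a)=\sum_j \lambda_j x_j$, followed by the convexity inequality with $p=1$, is exactly the evident argument behind this lemma; the paper states it without proof as ``obvious.'' You also read the supremum correctly as the normalized increments $\norm{F(t_j)-F(t_{j-1})}_X/(t_j-t_{j-1})$, with $f$ meaning $F$: that is how the lemma is invoked in the proofs of Lemmas~\ref{lem:small-scale-curve} and~\ref{lem:periodic-building-block}, and the unnormalized inequality as literally printed already fails for $F(t)=t$ in $\RR$ with two subintervals.
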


By the Aoki--Rolewicz theorem (see e.g. \cite{KPR1985}) any quasi-normed space is locally $p$-convex for some $0<p\le 1$. Consequently, for any quasi-Banach space $X$ and any $0<\alpha<1$ there is a constant $C_\alpha(X)\in[1,\infty)$ such that the series $\sum_{j=1}^\infty \alpha^{j-1} x_j$ converges, and
\begin{equation}\label{eq:ExpGalb}
\norm{\sum_{j=1}^\infty \alpha^{j} x_j}_X \le C_\alpha(X) \sup_{j\in J} \norm{x_j}_X
\end{equation}
for all bounded sequences $(x_j)_{j=1}^\infty$ in $X$. Using the terminology introduced by Turpin in his study of convexity in topological vector spaces (see \cite{Turpin1976}), this means that quasi-Banach spaces are exponentially galbed.

\begin{lemma}\label{lem:ExpGalb}
Let $X$ be a quasi-Banach space. Then
\[\lim_{\alpha\to 0^+} C_\alpha(X)=0.\]
\end{lemma}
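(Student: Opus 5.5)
By the Aoki--Rolewicz theorem we may fix $0<p\le 1$ such that $X$ is locally $p$-convex. Let $C_p$ denote its $p$-convexity constant, so that
\[
\norm{\sum_{j\in J} \lambda_j x_j}_X \le C_p \enpar{\sum_{j\in J} \lambda_j^p}^{1/p} \sup_{j\in J} \norm{x_j}_X
\]
for all finite families with $\lambda_j\ge 0$. The plan is to bound $C_\alpha(X)$ explicitly by a quantity that visibly tends to $0$ as $\alpha\to0^+$. The natural candidate comes from applying the displayed inequality with the weights $\lambda_j=\alpha^j$.

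The steps are as follows. Take a bounded sequence $(x_j)_{j=1}^\infty$ with $M=\sup_j\norm{x_j}_X$, and first consider the partial sums $S_n=\sum_{j=1}^n \alpha^j x_j$. The $p$-convexity inequality gives
\[
\norm{S_n}_X\le C_p\enpar{\sum_{j=1}^n \alpha^{jp}}^{1/p} M \le C_p\,\frac{\alpha}{(1-\alpha^p)^{1/p}}\, M.
\]
Applying the same estimate to blocks $S_m-S_n$ shows that the tail is bounded by $C_p\,\alpha^{n+1}(1-\alpha^p)^{-1/p}M$, which tends to $0$. Hence the sequence $(S_n)$ is Cauchy, and the series converges because $X$ is complete. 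Passing to the limit requires continuity of the quasi-norm. To secure it, I would either replace $\norm{\cdot}_X$ by an equivalent $p$-norm, which is continuous, or simply use $\norm{S}\le\kappa(\norm{S_n}+\norm{S-S_n})$ and let $n\to\infty$; either way only a harmless constant is lost. The conclusion is
\[
C_\alpha(X)\le \kappa\, C_p\,\frac{\alpha}{(1-\alpha^p)^{1/p}},
\]
and the right-hand side tends to $0$ as $\alpha\to0^+$.

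No step here is a serious obstacle. The one point that needs care is noticing the indexing in \eqref{eq:ExpGalb}: it uses $\alpha^j$ with $j\ge 1$, not $\alpha^{j-1}$. This matters, because the first term carries the factor $\alpha$, and that factor is exactly what makes the constant vanish. If the normalization were $\alpha^{j-1}$, the constant would instead stay bounded below by $1$, since one can take $x_1$ alone. I would therefore state explicitly that $C_\alpha(X)$ is taken with respect to $\sum_{j\ge1}\alpha^j x_j$, as written in \eqref{eq:ExpGalb}.
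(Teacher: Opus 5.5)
Your proof is correct, but it takes a different route from the paper's. The paper never goes back to $p$-convexity. It assumes the finiteness of $C_\beta(X)$ for one fixed $0<\beta<1$, which is the exponential galbedness asserted just before the lemma. Since $\alpha^j\le\alpha\beta^{j-1}$ for $j\ge1$ and $0<\alpha\le\beta$, one can write $\sum_j\alpha^j x_j=\frac{\alpha}{\beta}\sum_j\beta^j z_j$ with $z_j=(\alpha/\beta)^{j-1}x_j$ and $\norm{z_j}_X\le\norm{x_j}_X$. This gives $C_\alpha(X)\le\frac{\alpha}{\beta}C_\beta(X)$, and fixing $\beta$ and letting $\alpha\to0^+$ finishes.

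That argument is a two-line scaling comparison. It uses only homogeneity of the quasi-norm and one known constant, so no convergence or limit-passage issues arise.

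Your argument instead re-derives the finiteness of $C_\alpha(X)$, and the convergence of the series, from the Aoki--Rolewicz theorem. It yields the explicit bound $C_\alpha(X)\le\kappa\,C_p\,\alpha(1-\alpha^p)^{-1/p}$ in terms of the modulus of concavity and the $p$-convexity constant. That makes it self-contained and quantitative, at the cost of the Cauchy and limit bookkeeping, which you handle correctly via $\norm{S}_X\le\kappa(\norm{S_n}_X+\norm{S-S_n}_X)$. Both routes give $C_\alpha(X)=O(\alpha)$.

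Your remark on the indexing is well taken. The paper's scaling argument also depends on the normalization $\alpha^j$ with $j\ge1$, since that is exactly where its factor $\alpha/\beta$ comes from.
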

\begin{proof}
Since $\alpha^j \le \alpha \beta^{j-1}$ for all $j\in\NN$ and $0<\alpha\le\beta<\infty$,
\[
C_\alpha(X) \le \frac{\alpha}{\beta} C_\beta(X), \quad 0<\alpha\le \beta <1.\qedhere
\]
\end{proof}

Another consequence of the Aoki--Rolewicz theorem is that, despite the existence of quasi-norms that are discontinuous maps relative to the topology they induce (see \cite{Hyers1939}), any quasi-normed space can be equipped with a continuous quasi-norm. Indeed, $p$-norms are always continuous by the reverse $p$-triangle law.

The \emph{Banach envelope} of a quasi-Banach space $X$ consists of a Banach space $\widehat{X}$ together with a linear contraction $J_X\tp X \to \widehat{X}$ satisfying the following property: for every Banach space $Y$ and every continuous linear map $T\tp X \to Y$ there is a unique continuous linear map $\widehat{T}\tp\widehat{X}\to Y$ such that $\widehat{T}\circ J_X=T$, that is,
\[
\xymatrix{
\widehat{X}\ar[drr]^{\widehat{T}} & \\
X \ar[u]^{J_X} \ar[rr]_T && Y
}
\]
and the ``extension'' $\widehat{T}$ has a norm bounded by the norm of $T$. In particular, $X$ and $\widehat{X}$ have the same dual space.

The \emph{(Banach) envelope map} $J_X$ is one-to-one if and only if $X$ has the point separation property.

Given a function $F\tp I \to X$ we put
\[
\Lip(F)=\sup \enbrace{ \frac{\norm{F(t)-F(s)}_X}{\abs{t-s}} \tq t,s \in I, \, t\not=s}.
\]
Hence, $F$ is Lipschitz if and only if $\Lip(F)<\infty$.

While the information we are used to obtaining about a Lipschitz function $F$ from its derivative may be distorted when $F$ maps into a quasi-Banach space without the point separation property (see Theorem~\ref{thm:KZ}), weak derivatives determine those functions mapping into quasi-Banach spaces with the point separation property. Indeed, suppose that $F$ has zero weak derivative at almost every $t\in I$ and there is $a\in I$ with $F(a)=0$. Then, $x^{\ast}\circ F = 0$ for all $x^{\ast}\in X^{\ast}$. In the case when $X$ has the point separation property, we obtain $F=0$.

Concerning Theorem~\ref{thm:AA}, we recall that a basis $(e_n)_{n=1}^{\infty}$ for a quasi-Banach space $X$
is \emph{boundedly complete} if whenever $(a_n)_{n=1}^{\infty}$ is a sequence of scalars such that
\[
\sup_{m}\norm{\sum_{n=1}^{m} a_n\, e_n}<\infty,
\]
then the series $\sum_{n=1}^{\infty} a_{n}\, e_{n}$ converges in $X$.

Nonlocally convex spaces with a boundedly complete basis display a striking gap between weak and strong differentiability. The last observation in this section shows instances when the remaining obstruction to attaining strong differentiability at a point of a Lipschitz curve can be described very precisely. To that end we need to import a geometric feature of Banach spaces to the nonlocally convex setting.

A quasi-Banach space $X$ is said to have the \emph{Kadec--Klee property} if $\lim_j x_j=x$ for all sequences $(x_j)_{j=1}^\infty$ in $X$ and $x\in X$ such that $\lim_j x_j=x$ weakly and $\lim_j \norm{x_j}_X=\norm{x}_X$.

Observe that if $x\in X$ is such that $x^*(x)=0$ for all $x^*\in X^*$, then the constant sequence $(x_j)_{j=1}^\infty$ given by $x_j=x$ for all $j\in\NN$ weakly converges to $-x$. Hence, quasi-Banach spaces with the \emph{Kadec--Klee property} have the point separation property.

\begin{proposition}\label{prop:KKproperty}
Let $X$ be a quasi-Banach space with a basis $(e_n)_{n=1}^\infty$. Let $(e_n^*)_{n=1}^\infty$ denote the sequence of biorthogonal functionals. Assume that the quasi-norm $\norm{\cdot}_X$ of $X$ is continuous and that there is $p\in(0,\infty)$ such that
\[
\norm{y+z}_X^p \ge \norm{y}_X^p + \norm{z}_X^p
\]
for all $y$, $z\in X$ disjointly supported. Let $(x_j)_{j=1}^\infty$ be a sequence in $X$ and $x\in X$. If
\[
\lim_j e_n^*(x_j)=e_n^*(x)
\]
for all $n\in\NN$, and
\[
\lim_j \|x_j\|_X = \|x\|_X,
\]
then, $\lim_j x_j=x$ in the quasi-norm topology. In particular, $X$ has the Kadec--Klee property.
\end{proposition}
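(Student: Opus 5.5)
We split each $x_j$ into a head and a tail relative to the basis, and use the reverse $p$-inequality for disjoint supports to show the tails of $x_j$ are eventually as small as the tail of $x$. Let $S_m$ denote the $m$-th partial sum projection. These operators are uniformly bounded, since the basis is a Schauder basis, and $\lim_m S_m(y)=y$ for every $y\in X$.

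Fix $\varepsilon>0$ and choose $m$ with $\norm{x-S_m(x)}_X\le\varepsilon$. Since $S_m(x)$ lies in a finite-dimensional space and $e_n^*(x_j)\to e_n^*(x)$ for $n\le m$, we get $S_m(x_j)\to S_m(x)$ in quasi-norm. By continuity of the quasi-norm, $\norm{S_m(x_j)}_X\to\norm{S_m(x)}_X$.

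Next, $S_m(x_j)$ and $x_j-S_m(x_j)$ are disjointly supported, so the hypothesis gives
\[
\norm{x_j-S_m(x_j)}_X^p\le \norm{x_j}_X^p-\norm{S_m(x_j)}_X^p .
\]
The right-hand side tends to $\norm{x}_X^p-\norm{S_m(x)}_X^p$. This quantity tends to $0$ as $m\to\infty$ by continuity of the quasi-norm and $S_m(x)\to x$. So after possibly enlarging $m$ we may assume it is at most $\varepsilon^p$. Hence $\limsup_j\norm{x_j-S_m(x_j)}_X\le\varepsilon$.

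Finally, with $\kappa$ the modulus of concavity, write $x_j-x$ as a sum of three terms and apply the quasi-triangle inequality twice:
\[
\norm{x_j-x}_X\le \kappa^2\enpar{\norm{x_j-S_m(x_j)}_X+\norm{S_m(x_j)-S_m(x)}_X+\norm{S_m(x)-x}_X}.
\]
Taking the $\limsup$ gives $\limsup_j\norm{x_j-x}_X\le 2\kappa^2\varepsilon$. Since $\varepsilon$ is arbitrary, $x_j\to x$.

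For the Kadec--Klee property, note that each $e_n^*$ belongs to $X^*$. So weak convergence implies coordinatewise convergence, and the first part applies. The only delicate point is choosing $m$ so that both $\norm{x-S_m(x)}_X$ and $\norm{x}_X^p-\norm{S_m(x)}_X^p$ are small. This is handled by continuity of the quasi-norm; boundedness of the $S_m$ is not even needed beyond $S_m(x)\to x$.
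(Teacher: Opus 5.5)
Your proof is correct and follows essentially the same route as the paper. In both, $x_j$ is split into $S_m(x_j)$ and its tail, the reverse $p$-inequality for disjoint supports bounds the tail by $\norm{x_j}_X^p-\norm{S_m(x_j)}_X^p\to\norm{x}_X^p-\norm{S_m(x)}_X^p$, and the quasi-triangle inequality on the three-term decomposition finishes the argument; the only cosmetic difference is that the paper works with $p$-th powers and the constant $\kappa_3(X)$, letting $j\to\infty$ and then $m\to\infty$, whereas you fix $\varepsilon$, choose $m$ first, and use $\kappa^2$.
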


\begin{proof}
For each $m\in\NN$, let $S_m\tp X \to X$ be the partial-sum projection associated with the basis. Let $S_m^c=\Id_X-S_m$ be the complementary projection. By coordinatewise convergence,
\[
\lim_j S_m(x_j)= S_m(x),
\]
whence
\[
\lim_j \norm{S_m(x_j)}_X=\norm{S_m(x)}_X.
\]
Set $\gamma=\kappa_3(X)$. For all $j\in\NN$ we have
\begin{multline*}
\frac{1}{\gamma} \norm{x_j-x}_X^p
\le \norm{S_m(x_j)-S_m(x)}_X^p+ \norm{S^c_m(x)}_X^p + \norm{S^c_m(x_j)}_X^p\\
\le \norm{S_m(x_j)-S_m(x)}_X^p+ \norm{S^c_m(x)}_X^p+ \norm{x_j}_X^p- \norm{S_m(x_j)}_X^p.
\end{multline*}
Letting $j$ go to infinity and then $m$ go to infinity we obtain
\[
\limsup_j \norm{x_j-x}_X^p
\le \gamma \liminf_m \enpar{ \norm{S^c_m(x)}_X^p+ \norm{x}_X^p- \norm{S_m(x)}_X^p}
=0.\qedhere
\]
\end{proof}

For Lipschitz functions on intervals $I$ mapping into metric (or quasi-metric) spaces, the appropriate notion of first-order differentiability is the metric derivative, introduced by Kirchheim \cite{Kirchheim1994}, which we next recall for the sake of self-reference.

Let $(M,d_M)$ be a quasi-metric space and let $F\tp I\to M$ be a mapping. The \emph{metric derivative} of $F$ at a point $s\in I$ is defined
by
\[
\md(F)(s):=\lim_{t\to s}\frac{d_M(F(t),F(s))}{|t-s|},
\]
if the limit exists. The metric derivative measures the speed of the curve $F$ in the quasi-metric space $(M,d_M)$. It is a scalar quantity, in contrast with the vector-valued derivatives considered until now. A fundamental result due to Kirchheim shows that this notion is well behaved for Lipschitz functions into metric spaces.

\begin{theorem}[\cite{Kirchheim1994}*{Theorem 2}]\label{thm:Kir}
Let $(M,d)$ be a metric space and let $F\tp I\to M$ be a Lipschitz map. Then the metric derivative $\md(F)(t)$ exists for almost every $t\in I$.
\end{theorem}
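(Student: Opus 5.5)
The plan is to follow Kirchheim's original argument and reduce the metric problem to countably many real-valued Lipschitz functions, to which the classical Lebesgue differentiation theorem applies. Since $F$ is Lipschitz and $I$ is an interval, the image $F(I)$ is a countable union of compact sets, hence separable. I will fix a dense sequence $(y_k)_{k=1}^\infty$ in $F(I)$ and put
\[
g_k(t)=d(F(t),y_k),\qquad t\in I.
\]
By the triangle inequality each $g_k$ is a real Lipschitz function with $\Lip(g_k)\le\Lip(F)$. By Lebesgue's theorem there is a null set $N\subset I$ off which every $g_k$ is differentiable. I then define
\[
m(t)=\sup_k \abs{g_k'(t)},\qquad t\in I\setminus N.
\]
This is a measurable function bounded by $\Lip(F)$, and the claim to prove is that $\md(F)(s)=m(s)$ at every $s\in I\setminus N$ which is also a Lebesgue point of $m$. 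Such points have full measure, which gives the theorem.

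For the lower bound, I use that for every $k$ and all $s,t$,
\[
d(F(t),F(s))\ge \abs{g_k(t)-g_k(s)}.
\]
Dividing by $\abs{t-s}$ and letting $t\to s$ gives
\[
\liminf_{t\to s}\frac{d(F(t),F(s))}{\abs{t-s}}\ge\abs{g_k'(s)}.
\]
Taking the supremum over $k$ yields $\liminf_{t\to s} d(F(t),F(s))/\abs{t-s}\ge m(s)$.

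The upper bound is the main step, and it rests on the identity
\[
d(F(t),F(s))=\sup_k\abs{g_k(t)-g_k(s)}.
\]
The inequality $\ge$ is the triangle inequality. For $\le$, I choose $y_k\to F(s)$ along a subsequence, using density; then $g_k(s)\to 0$ and $g_k(t)\to d(F(t),F(s))$. Next, each $g_k$ is absolutely continuous, so for $s<t$
\[
\abs{g_k(t)-g_k(s)}\le\int_s^t\abs{g_k'}\le\int_s^t m,
\]
since $\abs{g_k'}\le m$ almost everywhere. Consequently
\[
\frac{d(F(t),F(s))}{\abs{t-s}}\le\frac{1}{\abs{t-s}}\abs{\int_s^t m}.
\]
At a Lebesgue point $s$ of $m$ the right-hand side tends to $m(s)$, so $\limsup_{t\to s} d(F(t),F(s))/\abs{t-s}\le m(s)$. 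Combined with the lower bound, this shows that the limit defining $\md(F)(s)$ exists and equals $m(s)$.

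The delicate point is this supremum identity, which converts a pointwise metric quantity into a supremum of countably many scalar increments. It is exactly where both the separability of the image and the genuine triangle inequality of the metric are used. Measurability of $m$ is automatic, since it is a countable supremum of the measurable functions $\abs{g_k'}$, and the rest of the argument is routine.
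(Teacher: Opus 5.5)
Your proof is correct and complete. The paper gives no argument of its own for this statement: it quotes Kirchheim and uses the result as a black box, essentially only in the implication \ref{it:TLC1}$\Rightarrow$\ref{it:TLC3} of Theorem~\ref{thm:local-convexity}, for the metric induced by an equivalent norm.

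Kirchheim's result is actually $n$-dimensional. For Lipschitz maps $\RR^n\to M$, the metric differential exists a.e., is a seminorm, and gives a uniform first-order expansion of $d(f(z),f(y))$. As far as I recall, its proof relies on the same device you use: an isometric embedding of the separable image into $\ell_\infty$ via distances to a dense sequence, followed by coordinatewise (equivalently, $w^*$-) differentiation. For curves this reduces to your argument, which is also the standard proof of Ambrosio, Gigli and Savar\'e for absolutely continuous curves. In that language, $m(t)=\sup_k\abs{g_k'(t)}$ is the $\ell_\infty$-norm of the derivative of $t\mapsto(g_k(t))_k$. The lower bound is coordinatewise, and the upper bound comes from the integral estimate at Lebesgue points of $m$.

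Your route is self-contained: it needs only Lebesgue's theorem for real Lipschitz functions and the Lebesgue differentiation theorem. It also yields the explicit formula $\md(F)=\sup_k\abs{g_k'}$ a.e. The citation gives the stronger multidimensional statement, which the paper never needs.

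Your closing remark fits this paper well. The triangle inequality enters in two places: through $\abs{g_k(t)-g_k(s)}\le d(F(t),F(s))$, which gives Lipschitz continuity of $g_k$ and the lower bound, and through the supremum identity. This is exactly what is lost for the quasi-distance $\norm{x-y}_X$. That is consistent with the corollary at the end of Section~\ref{sec:Tamarkin}, which says that metric differentiability of every Lipschitz curve forces local convexity.
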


Thus, metric differentiability is automatic for Lipschitz curves independently of any linear or convex structure on the target metric space. As we will see later on, the behavior of Lipschitz functions mapping into quasi-metric spaces is not so clean.

If the quasi-Banach space $X$ has the \emph{Kadec--Klee property}, the differentiability of a Lipschitz map $F\tp I \to X$ at a point $s\in I$ where the weak derivative $f(s)$ exists is equivalent to the fact that the metric derivative exists, and
\[
\md(F)(s)=\norm{f^\prime(s)}_X.
\]
Thus, once almost everywhere weak differentiation has taken place, as it happens in the case when $X$ has a boundedly complete basis, the passage to strong differentiation is reduced to the convergence of the sizes of the difference quotients. In particular, since the spaces $\ell_p$ for $0<p<\infty$ have the Kadec--Klee property by Proposition~\ref{prop:KKproperty}, we have the following.

\begin{corollary}\label{cor:SufCondStrDiff}
Suppose $0<p<1$. Let $F\tp\RR\to \ell_p$ be a Lipschitz map with weak derivative $f$ almost everywhere. Then $F$ is almost everywhere differentiable if and only if $F$ is almost everywhere metric differentiable, and $\md(F)(s)=\norm{f(s)}_p$ a.e.\@ $s\in\RR$.
\end{corollary}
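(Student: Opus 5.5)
Fix a point $s\in\RR$ at which the weak derivative $f(s)$ exists; by hypothesis this is almost every $s$. I will show that, at such a point, strong differentiability of $F$ at $s$ is equivalent to the metric derivative $\md(F)(s)$ existing and being equal to $\norm{f(s)}_p$. The corollary then follows by intersecting full-measure sets. The tools are Proposition~\ref{prop:KKproperty} applied to the canonical basis of $\ell_p$, together with the continuity of the $p$-norm.

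\emph{Forward direction.} Suppose $F$ is strongly differentiable at $s$ with derivative $v$. Strong differentiability implies weak differentiability, and $\ell_p$ has the point separation property (the coordinate functionals separate points), so $v=f(s)$. Put $Q_t=(F(t)-F(s))/(t-s)$. Since $Q_t\to f(s)$ in quasi-norm and $\norm{\cdot}_p$ is continuous (it is a $p$-norm), we get $\norm{Q_t}_p\to\norm{f(s)}_p$. This is exactly the statement that $\md(F)(s)$ exists and equals $\norm{f(s)}_p$.

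\emph{Converse direction.} Suppose $\md(F)(s)=\norm{f(s)}_p$, and take any sequence $t_j\to s$ with $t_j\neq s$. Set $x_j=Q_{t_j}$ and $x=f(s)$.
\begin{itemize}
\item Weak differentiability at $s$, applied to the coordinate functionals $e_n^*\in\ell_p^*$, gives $e_n^*(x_j)\to e_n^*(x)$ for every $n$.
\item The metric derivative hypothesis gives $\norm{x_j}_p\to\norm{x}_p$.
\item In $\ell_p$ with $0<p<1$ we have $\norm{y+z}_p^p=\norm{y}_p^p+\norm{z}_p^p$ whenever $y$ and $z$ are disjointly supported, and the $p$-norm is continuous. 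So the hypotheses of Proposition~\ref{prop:KKproperty} hold.
\end{itemize}
Proposition~\ref{prop:KKproperty} therefore yields $x_j\to x$ in $\ell_p$. Since the sequence $(t_j)$ was arbitrary, $Q_t\to f(s)$ as $t\to s$, so $F$ is differentiable at $s$ with $F'(s)=f(s)$.

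\emph{Conclusion.} Let $E$ be the full-measure set where $f$ exists. If $F$ is differentiable a.e., the forward direction gives $\md(F)=\norm{f}_p$ a.e. on $E$. Conversely, if $F$ is metric differentiable a.e. with $\md(F)(s)=\norm{f(s)}_p$ a.e., the converse direction gives differentiability a.e.

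The only delicate point is reading the statement correctly: the equality $\md(F)=\norm{f}_p$ must be part of the hypothesis in the converse direction. Mere existence of the metric derivative would not suffice, because coordinatewise convergence together with convergence of norms to some other value does not give strong convergence. With that reading, the remaining steps are the routine verifications listed above.
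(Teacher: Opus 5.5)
Your proof is correct and follows essentially the paper's route. The paper deduces the corollary from the Kadec--Klee property of $\ell_p$ given by Proposition~\ref{prop:KKproperty}, combined with the weak (in particular coordinatewise) convergence of the difference quotients to $f(s)$, exactly as you do; your explicit forward direction (uniqueness of the weak derivative plus continuity of the $p$-norm) and your reading of $\md(F)=\norm{f}_p$ as part of the condition match the pointwise formulation the paper states just before the corollary.
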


By Corollary~\ref{cor:SufCondStrDiff}, to prevent differentiability it is necessary to force a persistent discrepancy between the upper and the lower limits of the incremental quotients. In this regard, we note that, given a quasi-metric space $(M,d_M)$, a Lipschitz curve $F\tp I \to M$, and $s\in I$,
\begin{align}
\Lip(F;s) &= \limsup_{t\to s} \frac{d_M(F(t),F(s))}{\abs{t-s}},\label{eq:UOvsUP} \\
\lip(F;s)&\ge \liminf_{t\to s} \frac{d_M(F(t),F(s))}{\abs{t-s}}\label{eq:LOvsLL}.
\end{align}
\section{A solution to Tamarkin's problem for quasi-Banach spaces}\label{sec:Tamarkin}\noindent
The elementary geometry behind the construction below is already visible in $\ell_p$, $0<p<1$.

Fix $m\in\NN$ and consider the polygonal curve $F_m\tp[0,1]\to\ell_p$ defined by
\[
F_m(t)=\sum_{n=1}^{i-1} e_n +\left(t-\frac{i-1}{m}\right)e_i,
\quad
t\in\left[\frac{i-1}{m},\frac{i}{m}\right],\quad 1\le i\le m.
\]
Here, an throughout this paper, we use the convention that a sum from $1$ to $0$ is zero. We have
\[
\norm{F_m(1)-F_m(0)}_p= \norm{\sum_{n=1}^m e_n}_p =m^{1/p-1}
\]
and
\[
\lim_{t\to s}
\frac{\norm{F_m(t)-F_m(s)}_p}{|t-s|}=1,
\quad s\in[0,1].
\]
Thus, although the curves $(F_m)_{m=1}^\infty$ have speed one, their endpoints are arbitrarily far apart. Rescaling, this produces $1$-Lipschitz curves which have very small oscillation on a sufficiently fine scale while retaining a large oscillation on a larger scale. The next lemma shows that this phenomenon occurs not only in $\ell_p$ for $0<p<1$ but in arbitrary nonlocally convex quasi-Banach spaces.

\begin{lemma}[cf.\@ \cite{AlbiacJOCA}*{Proposition~3.2}]
\label{lem:small-scale-curve}
Let $X$ be a quasi-normed space. Then $X$ is not locally convex if and only if for every $\varepsilon>0$ there are $r>0$ and a curve $F\tp[0,1]\to X$ such that
\begin{enumerate}[label=(\roman*)]
\item $F(0)=0$, $\norm{F(1)}_X=1$, $\Lip(F)=1$, and
\item\label{it:SSC:2} $\norm{F(t)-F(s)}_X\le \varepsilon |t-s|$ whenever $ |t-s|\le r$.
\end{enumerate}
\end{lemma}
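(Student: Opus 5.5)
We prove the two implications separately. The direction ``locally convex $\Rightarrow$ no such curves'' is a direct application of the convexity constant. The converse is a rescaled polygonal construction in the spirit of the curves $F_m$ in $\ell_p$ described above.

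\emph{Sufficiency of the curves.} Suppose $X$ is locally convex with convexity constant $C$, and suppose the curves exist for some $\varepsilon<1/C$. Pick $m\in\NN$ with $1/m\le r$ and use the uniform partition $t_j=j/m$ of $[0,1]$. Condition \ref{it:SSC:2} gives $\norm{F(t_j)-F(t_{j-1})}_X\le \varepsilon/m$. Writing $F(1)-F(0)$ as the sum of these $m$ increments and using the convexity constant (this is Lemma~\ref{lem:Anso}), we get
\[
1=\norm{F(1)}_X\le C\sum_{j=1}^m \norm{F(t_j)-F(t_{j-1})}_X\le C\varepsilon<1,
\]
which is a contradiction.

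\emph{Construction of the curves.} Suppose $X$ is not locally convex, and let $\kappa$ be its modulus of concavity. Fix $\varepsilon>0$ and choose $M>\kappa/\varepsilon$. Since the convexity constant is infinite, there are nonzero vectors $x_1,\dots,x_m$ with
\[
\norm{\textstyle\sum_j x_j}_X> M\,L,\qquad L=\sum_j\norm{x_j}_X .
\]
Let $G\tp[0,L]\to X$ be the polygonal curve through the partial sums $0,\,x_1,\,x_1+x_2,\dots$, where the $j$-th segment has parameter length $\norm{x_j}_X$. Thus on each segment $G$ moves with unit speed in the direction $x_j/\norm{x_j}_X$.
\begin{itemize}
\item $G$ is Lipschitz: any increment is a sum of at most $m$ pieces, each of norm at most the corresponding parameter length, so $\Lip(G)\le\kappa_m(X)<\infty$.
\item If $|t-s|\le r_0:=\min_j\norm{x_j}_X$, then $s$ and $t$ lie in at most two adjacent segments. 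Hence $G(t)-G(s)$ is a sum of two pieces whose parameter lengths add up to $|t-s|$, and so $\norm{G(t)-G(s)}_X\le\kappa|t-s|$.
\end{itemize}
Now set $H(t)=G(Lt)/\norm{G(L)}_X$ for $t\in[0,1]$. Then $\norm{H(1)}_X=1$, so $\lambda:=\Lip(H)\ge1$. On scales $|t-s|\le r_0/L$ we have
\[
\norm{H(t)-H(s)}_X\le \frac{\kappa L}{\norm{G(L)}_X}\,|t-s|<\frac{\kappa}{M}\,|t-s|.
\]

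\emph{Normalization (the delicate step).} We must achieve $\Lip(F)=1$ and $\norm{F(1)}_X=1$ at the same time. The key point is that the supremum defining $\lambda$ is attained. Quotients with $|t-s|\le r_0/L$ are at most $\kappa/M<\varepsilon$. We may take $\varepsilon<1$, since the property for small $\varepsilon$ implies it for larger ones. Then these small-scale quotients are strictly below $1\le\lambda$, so $\lambda$ is the supremum over the compact set $\{|t-s|\ge r_0/L\}$. On that set the quotient is continuous, because we may assume the quasi-norm is continuous (Aoki--Rolewicz). Hence there are $s_0<t_0$ with $\norm{H(t_0)-H(s_0)}_X=\lambda(t_0-s_0)$. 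Define
\[
F(u)=\frac{H(s_0+u(t_0-s_0))-H(s_0)}{\lambda(t_0-s_0)},\qquad u\in[0,1].
\]
Then $F(0)=0$ and $\norm{F(1)}_X=1$. The Lipschitz constant of $F$ is at most $\lambda(t_0-s_0)/(\lambda(t_0-s_0))=1$, and it equals $1$ because it is attained at the endpoints. Under this affine rescaling, difference quotients are multiplied by $1/\lambda\le1$. So for $|u-v|\le r:=r_0/(L(t_0-s_0))$ we get
\[
\norm{F(u)-F(v)}_X\le\frac{\kappa}{M}|u-v|\le\varepsilon|u-v|,
\]
which proves the lemma.
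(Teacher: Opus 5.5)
Your proof is correct and follows essentially the same route as the paper. One direction is the partition argument with the convexity constant. For the other, you take a polygonal curve built from a family of vectors witnessing failure of convexity, observe that its Lipschitz constant is attained at a pair of parameters whose distance is bounded below (because small-scale quotients are at most $\kappa$), and affinely rescale that piece.

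Your write-up is somewhat more careful than the paper's: you actually justify why the supremum is attained, and you normalize by $\lambda(t_0-s_0)$, whereas the paper's displayed rescaling divides only by the Lipschitz constant. One caveat: ``we may assume the quasi-norm is continuous'' is not literally a harmless reduction when the conclusion requires the exact normalization $\Lip(F)=1=\norm{F(1)}_X$ for the given quasi-norm. The paper tacitly relies on the same continuity when it asserts that the supremum is attained.
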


\begin{proof}
Suppose first that $X$ is locally convex, and let $C$ be its convexity constant. Choose $0<\varepsilon<1/C$. If a curve $F$ as in the statement existed, take a partition $(t_i)_{i=0}^m$ of $[0,1]$ with diamenter at most $r$. Then, by Lemma~\ref{lem:Anso},
\[
1\le C \sup_{1\le i \le m} \frac{\norm{F(t_i)-F(t_{i-1})}_X}{t_i-t_{i-1}} \le C\varepsilon<1,
\]
a contradiction.

Conversely, suppose that $X$ is not locally convex. Let $\kappa\ge1$ be modulus of concavity of the quasi-norm. We may assume that $0<\varepsilon<1/\kappa$. There are vectors $(x_i)_{i=1}^m$ in $B_X$ and positive numbers $(\lambda_i)_{i=1}^m$ such that $\sum_{i=1}^m\lambda_i=1$ and
\[
\norm{\sum_{i=1}^m\lambda_i x_i}_X\ge \frac{\kappa}{\varepsilon}.
\]
Set $t_i=\sum_{j=1}^i\lambda_j$, $0\le i \le m$. Define the polygonal curve $G\tp[0,1]\to X$ by
\[
G(t)=\sum_{n=1}^{i-1} x_n+ (t-t_{i-1})x_i, \quad t\in[t_{i-1},t_i], \quad 1\le i \le m.
\]
We have $L:=\Lip(G)\ge \kappa/\varepsilon$. Further, the suppremum defining $L$ is attained, that is, there are $0\le a<b\le 1$ such that
\[
\norm{G(b)-G(a)}_X = L(b-a).
\]
Set
\[
r=\frac{1}{b-a}\min_{1\le j \le m} \lambda_j.
\]
Let $0\le s\le t \le 1$ be such that $t-s\le r(b-a)$. If there is $j=1$, \dots, $m$ such that $t_{j-1}\le s \le t \le t_j$, then
\[
\norm{F(t)-F(s)}_X = (t-s) \norm{x_j}_X.
\]
Otherwise, there is $j=1$, \dots, $m-1$, such that $t_{j-1}\le s \le t_j \le t \le t_{j+1}$. Hence,
\begin{align*}
\norm{G(t)-G(s)}_X
&=\norm{(t_j-s)x_j + (t-t_j) x_{j+1}}_X \\
&\le \kappa \enpar{(t_j-s) \norm{x_j}_X + (t-t_j)\norm{x_{j+1}}_X}.
\end{align*}
In any case, $\norm{G(t)-G(s)}_X\le \kappa (t-s)$. Consequenly, the function
\[
F\tp [0,1] \to X, \quad t\mapsto \frac{G((1-t)a+tb)-G(a)}{L},
\]
satisfies the desired conditions.
\end{proof}

Rescaling and periodizing the curves provided by Lemma~\ref{lem:small-scale-curve} gives the building blocks that will be used in a subsequent multiscale construction.

\begin{lemma}\label{lem:periodic-building-block}
Let $X$ be a quasi-normed space with modulus of concavity $\kappa$. Then $X$ is not locally convex if and only if for every $0<\varepsilon<1$ there exist $r>0$ and a curve $F\tp\RR\to X$ such that
\begin{enumerate}[label=(\roman*)]
\item\label{it:wald1} $\Lip(F)=1$; $\norm{F}_\infty=\varepsilon$;
\item\label{it:wald2} $\norm{F(t)-F(s)}_X\le \varepsilon |t-s|$ whenever $|t-s|\le r$; and
\item\label{it:wald3} for every $s\in\RR$, there is $t\in B_{\RR}(s,\varepsilon)$ such that
\[
\norm{F(t)-F(s)}_X\ge\frac{\varepsilon}{2\kappa}.
\]
\end{enumerate}
\end{lemma}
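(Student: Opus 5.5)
The plan is to derive the forward implication from Lemma~\ref{lem:small-scale-curve} by rescaling and then ``folding'' the curve periodically. Folding with a tent map avoids any loss of constants coming from the quasi-triangle inequality. The converse will follow from Lemma~\ref{lem:Anso}, as in the proof of Lemma~\ref{lem:small-scale-curve}.

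\emph{Forward direction.} Assume $X$ is not locally convex and fix $0<\varepsilon<1$. Apply Lemma~\ref{lem:small-scale-curve} with $\varepsilon$ itself to get $r'>0$ and $G\tp[0,1]\to X$ with $G(0)=0$, $\norm{G(1)}_X=1$, $\Lip(G)=1$, and $\norm{G(u)-G(v)}_X\le\varepsilon\abs{u-v}$ whenever $\abs{u-v}\le r'$.
\begin{enumerate}[label=(\arabic*)]
\item Rescale by setting $H(t)=\varepsilon G(t/\varepsilon)$ for $t\in[0,\varepsilon]$. Then $\Lip(H)=1$, and $\norm{H(t)}_X\le t\le\varepsilon$ since $G(0)=0$. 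Also $\norm{H(\varepsilon)}_X=\varepsilon$. Finally, $\norm{H(t)-H(s)}_X\le\varepsilon\abs{t-s}$ whenever $\abs{t-s}\le r:=\varepsilon r'$.
\item Let $\tau\tp\RR\to[0,\varepsilon]$ be the $2\varepsilon$-periodic tent map, $\tau(t)=\operatorname{dist}(t,2\varepsilon\ZZ)$. It is $1$-Lipschitz, equals $0$ on $2\varepsilon\ZZ$, and equals $\varepsilon$ on $\varepsilon+2\varepsilon\ZZ$.
\item Put $F=H\circ\tau$. Since $\abs{\tau(t)-\tau(s)}\le\abs{t-s}$, we get $\Lip(F)\le 1$, and equality holds because $F=H$ on $[0,\varepsilon]$. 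We also get $\norm{F}_\infty=\varepsilon$, which gives~\ref{it:wald1}.
\item Condition~\ref{it:wald2} is inherited from $H$, again because $\abs{\tau(t)-\tau(s)}\le\abs{t-s}\le r$.
\end{enumerate}

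For~\ref{it:wald3}, fix $s\in\RR$. There are points $t_1\in 2\varepsilon\ZZ$ and $t_2\in\varepsilon+2\varepsilon\ZZ$ with $\abs{t_i-s}\le\varepsilon$, and
\[
\norm{F(t_2)-F(t_1)}_X=\norm{H(\varepsilon)-H(0)}_X=\varepsilon .
\]
The quasi-triangle inequality gives
\[
\varepsilon\le\kappa\enpar{\norm{F(t_1)-F(s)}_X+\norm{F(t_2)-F(s)}_X},
\]
so one of the two terms is at least $\varepsilon/(2\kappa)$.

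The only delicate point I expect is that $t_i$ may lie at distance exactly $\varepsilon$ from $s$, while $B_{\RR}(s,\varepsilon)$ may be open. I would handle this by running the construction with a period slightly smaller than $2\varepsilon$, namely with the rescaling $\varepsilon'<\varepsilon$ close to $\varepsilon$. Then every $s$ lies strictly within $\varepsilon$ of both point sets. The resulting sup-norm $\varepsilon'$ is then corrected by multiplying $F$ by $\varepsilon/\varepsilon'$. That factor slightly increases the Lipschitz constant, so I would rather rescale $G$ differently: use $H(t)=\varepsilon G(t/\varepsilon)$ and the tent map with half-period $\varepsilon$, but choose $t_i$ in the closed ball and observe that~\ref{it:wald3} is stable under slightly moving $t_i$ toward $s$, by the continuity of $F$ (assuming a continuous quasi-norm, or using the bound $\varepsilon/(2\kappa)$ with room from a slightly smaller target). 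This bookkeeping is the main nuisance; the structure of the argument is clear.

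\emph{Converse.} Suppose $X$ is locally convex with convexity constant $C$, and take $\varepsilon<1/(2\kappa C)$ with $F$ and $r$ as in the statement. Fix $s$ and $t$ with $\abs{t-s}<\varepsilon$. Partition the segment between them into pieces of length at most $r$ and apply Lemma~\ref{lem:Anso} together with~\ref{it:wald2}. This yields
\[
\norm{F(t)-F(s)}_X\le C\varepsilon\abs{t-s}<C\varepsilon^2<\frac{\varepsilon}{2\kappa},
\]
which contradicts~\ref{it:wald3}.
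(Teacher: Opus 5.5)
Your proposal is correct and follows the paper's proof essentially verbatim. You use the same folded curve $F(t)=\varepsilon G\bigl(d_{\RR}(t/\varepsilon,2\ZZ)\bigr)$ built from Lemma~\ref{lem:small-scale-curve}, the same quasi-triangle argument with the two nearby points of $2\varepsilon\ZZ$ and $\varepsilon+2\varepsilon\ZZ$ for \ref{it:wald3}, and the same contradiction via Lemma~\ref{lem:Anso} with $\varepsilon<1/(2\kappa C)$ for the converse.

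The paper implicitly reads $B_{\RR}(s,\varepsilon)$ as a closed ball: it uses $\{0,\varepsilon\}\subset B_{\RR}(s,\varepsilon)$ for $s\in[0,\varepsilon]$. Under that reading your open-ball bookkeeping is unnecessary. Under the open-ball reading, the only problematic points are $s\in\varepsilon\ZZ$. There $\norm{F(t_i)-F(s)}_X=\varepsilon$, so moving $t_i$ a distance $\delta\le\varepsilon/(2\kappa)$ toward $s$ still gives a value of at least $\varepsilon/\kappa-\delta\ge\varepsilon/(2\kappa)$. This makes your ``room'' remark rigorous without assuming a continuous quasi-norm.
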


\begin{proof}
Suppose first that curves with properties \ref{it:wald1}--\ref{it:wald3} exist for every $0<\varepsilon<1$. If $X$ were locally convex, we fix
\[
0<\varepsilon<\frac{1}{2\kappa C},
\]
where $C$ is the convexity constant of $X$, and pick the corresponding curve $F$. Also pick $s\in\RR$. By \ref{it:wald2}, there is $t\in\RR$ with $|t-s|\le\varepsilon$ and
\[
\norm{F(t)-F(s)}_X\ge\frac{\varepsilon}{2\kappa}.
\]
Choose a partition of the interval with endpoints $s$ and $t$ with diamater at most $r$. Combining Property~\ref{it:wald3} with Lemma~\ref{lem:Anso} then gives
\[
C \varepsilon^2 <\frac{\varepsilon}{2\kappa}
\le \norm{F(t)-F(s)}
\le C \varepsilon |t-s|
\le C \varepsilon^2.
\]
This absurdity evinces that $X$ is not locally convex.

Suppose now that $X$ is not locally convex. Let $F_0\tp[0,1]\to X$ and $r_0>0$ be given by Lemma~\ref{lem:small-scale-curve} for the current value of $\varepsilon$. Denote by
\begin{equation}\label{eq:distance}
d_{\RR}(t,2\ZZ)=\inf_{k\in2\ZZ}|t-k|
\end{equation}
the distance from $t$ to $2\ZZ$, and define
\[
F(t)=\varepsilon\, F_0\left(d_{\RR}\left(\frac{t}{\varepsilon},2\ZZ\right)\right), \qquad t\in\RR.
\]
Then $F$ is a $2\varepsilon$-periodic even function with $F(0)=0$, $\norm{F(\varepsilon)}_X=\varepsilon$, $\Lip(F)=1$, and $\norm{F}_\infty=\varepsilon$. Set $r=\varepsilon r_0$. If $t$, $s\in\RR$ satisfy $|t-s|\le r$, then
\[
\abs{d_{\RR}\left(\frac{t}{\varepsilon},2\ZZ\right)-d_{\RR}\left(\frac{s}{\varepsilon},2\ZZ\right)} \le \frac{ \abs{t-s}}{\varepsilon}\le r_0,
\]
whence, by Condition~\ref{it:SSC:2} in Lemma~\ref{lem:small-scale-curve},
\[
\norm{F(t)-F(s)}
\le \varepsilon^2 \abs{d_{\RR}\left(\frac{t}{\varepsilon},2\ZZ\right)-d_{\RR}\left(\frac{s}{\varepsilon},2\ZZ\right)}
\le \varepsilon \abs{t-s}.
\]

It remains to verify \ref{it:wald2}. By periodicity and parity, it suffices to consider the case when $s\in[0,\varepsilon]$.
Since
\[
\varepsilon=\norm{F(\varepsilon)-F(0)}_X\le \kappa \enpar{ \norm{F(s)-F(\varepsilon)}_X+\norm{F(s)-F(0)}_X},
\]
there is $t\in\{0,\varepsilon\}$ such that
\[
\norm{F(s)-F(t)}_X\ge \frac{\varepsilon}{2\kappa}.
\]
Since $\{0,\varepsilon\}\subset B_{\RR}(s,\varepsilon)$, we are done.
\end{proof}

Lemma~\ref{lem:periodic-building-block} provides the building blocks for an iterative construction of the required non-differentiable Lipschitz functions (Proposition~\ref{prop:multiscale-curve}). Similar iterative procedures have already appeared in the context of Lipschitz differentiation (see the paragraph above \cite{ACP2010}*{Definition 1.14} and \cite{Bate_Structure_of_measures_2015}*{Section 4}) and can be traced back to Assouad's embedding theorem \cite{Assouad1983}, see also \cite{Gromov_metric_structures_1999}*{Appendix B}. Section \cite{Gromov_metric_structures_1999}*{B.6} is particularly relevant, as it illustrates the `independence of scales' property of H\"older functions, which suggested the validity of Theorem~\ref{thm:RNP-characterization} and Theorem~\ref{thm:local-convexity}.

We continue our construction by assembling the building blocks provided by Lemma~\ref{lem:periodic-building-block}
at a sequence of increasingly small scales.

\begin{proposition}\label{prop:multiscale-curve}
Let $X$ be a nonlocally convex quasi-Banach space with modulus of concavity $\kappa$. For every $0<\eta<1$ there exist a Lipschitz curve $F\tp\RR\to X$ and sequences $(r_j)_{j=1}^\infty$ and $(\varepsilon_j)_{j=1}^\infty$ of positive numbers decreasing to zero such that
\[
\sup_{t\in B_{\RR}(s,r_j)}\frac{\norm{F(t)-F(s)}_X}{r_j}\le\eta
\]
and
\[
\sup_{t\in B_{\RR}(s,\varepsilon_j)}\frac{\norm{F(t)-F(s)}_X}{\varepsilon_j}\ge\frac{1-\eta}{2\kappa^3}
\]
for every $j\in\NN$ and every $s\in\RR$.
\end{proposition}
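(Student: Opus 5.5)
We build $F$ as a lacunary series $F=\sum_{k=1}^\infty G_k$. Each $G_k$ is a curve supplied by Lemma~\ref{lem:periodic-building-block} for a parameter $\varepsilon_k$, with associated scale $\rho_k>0$. The parameters are chosen inductively and decrease very fast. Lemma~\ref{lem:periodic-building-block} gives $\Lip(G_k)=1$ and $\norm{G_k}_\infty=\varepsilon_k$. It also gives $\norm{G_k(t)-G_k(s)}_X\le\varepsilon_k\abs{t-s}$ for $\abs{t-s}\le\rho_k$, and it guarantees that every $s$ has a point $t\in B_\RR(s,\varepsilon_k)$ with $\norm{G_k(t)-G_k(s)}_X\ge\varepsilon_k/(2\kappa)$.

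First we make sure the series converges and $F$ is Lipschitz. Since a single term has Lipschitz constant $1$, summing infinitely many of them in a quasi-Banach space needs the exponential galbedness \eqref{eq:ExpGalb}. Rather than summing the $G_k$ with weight $1$, we could group terms. A cleaner route is to insist that each later block be already very small in oscillation at scales where earlier blocks matter, and then control $\norm{\sum_{k>j}(G_k(t)-G_k(s))}_X$ by using $\norm{\cdot}_\infty$ for large $k$ and the small-scale bound for small $k$.

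The target estimates are as follows. Put $r_j=\rho_j$ and choose $\varepsilon_{j+1}\ll\rho_j$. For $\abs{t-s}\le r_j$ we split the increment into three groups. Terms $k\le j$ each contribute at most $\varepsilon_k\abs{t-s}$, since $\rho_j\le\rho_k$ by arranging the $\rho_k$ to decrease. Term $j+1$ contributes at most $\abs{t-s}\cdot 1$, so this term must be handled differently: we shift indices so that the ``good'' scale $r_j$ sits between $\varepsilon_{j+1}$-level tails. Concretely, $r_j$ is chosen with $\varepsilon_{j+1}\le \eta' r_j$, so all terms $k>j$ together contribute at most a constant times $\varepsilon_{j+1}$ in sup norm. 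That is at most $\eta' r_j$ if the $\varepsilon_k$ decay geometrically fast, using \eqref{eq:ExpGalb} and Lemma~\ref{lem:ExpGalb} to make the constant small. The terms $k\le j$ are handled by $\kappa_{j}$-type bounds, which blow up with $j$. To avoid this, we impose $\varepsilon_k\le \alpha^k$ and use \eqref{eq:ExpGalb} again with small $\alpha$, giving a total of at most $\eta\, r_j$.

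For the lower bound at scale $\varepsilon_j$, take $t$ from property (iii) of the $j$-th block. Then write $G_j(t)-G_j(s)=(F(t)-F(s))-\sum_{k<j}(\cdot)-\sum_{k>j}(\cdot)$ and apply the quasi-triangle inequality twice, which accounts for the factor $\kappa^3$. The terms $k<j$ contribute at most $\eta'\varepsilon_j$, because $\abs{t-s}\le\varepsilon_j\le\rho_{j-1}$ and the small-oscillation bounds apply. The terms $k>j$ contribute at most $C\varepsilon_{j+1}\le\eta'\varepsilon_j$. The result is $\norm{F(t)-F(s)}_X\ge\varepsilon_j(1-\eta)/(2\kappa^3)$ after adjusting constants.

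The main obstacle is the bookkeeping that ensures the sum of infinitely many $1$-Lipschitz blocks remains Lipschitz with a controlled constant. The two bounds must stay independent of $j$ despite the quasi-triangle inequality losing constants with the number of summands. We handle this by choosing the parameters inductively as $\varepsilon_1$, then $\rho_1$ from the lemma, then $\varepsilon_2\ll\eta\rho_1$, and so on. We then bound all multi-term sums through \eqref{eq:ExpGalb} with a geometric ratio $\alpha$ chosen so that $C_\alpha(X)$ is small, which Lemma~\ref{lem:ExpGalb} permits.
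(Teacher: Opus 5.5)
Your proposal is correct and follows essentially the same route as the paper. Both use the building blocks from Lemma~\ref{lem:periodic-building-block}, choose the parameters recursively with $\varepsilon_{j+1}\lesssim\alpha r_j$, and split $F$ into the early blocks (small oscillation, summed via \eqref{eq:ExpGalb}), the critical block, and the tail (controlled in sup norm). The only step you leave implicit is the global Lipschitz bound; it follows from the same split at scales $r_j<\abs{t-s}\le r_{j-1}$, bounding the middle block by $\Lip(G_j)=1$ and the tail by $\lesssim r_j<\abs{t-s}$.
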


\begin{proof}
Set $\gamma=\kappa_3(X)$ in equation \eqref{kappam}. For each $0<\alpha<1$, let $C_\alpha=C_\alpha(X)$ be as in \eqref{eq:ExpGalb}.
By Lemma~\ref{lem:ExpGalb}, choosing $\alpha$ small enough we get
\[
3 \gamma C_\alpha\le \eta
\]
and
\[
\frac{1}{2\kappa^3} - 3 \gamma C_\alpha\ge \frac{1-\eta}{2\kappa^3}.
\]

We will build $(\varepsilon_j,r_j,F_j)_{j=1}^\infty$, where $F_j\tp\RR\to X$ is a curve, recursively. Put $r_0=1$ and define $\varepsilon_0$ and $F_0$ arbitrarily. Assume that $j\in\NN$ and that $\varepsilon_{j-1}$, $r_{j-1}$ and $F_{j-1}$ are constructed. Then, we choose $r=r_j\in(0,\varepsilon_j)$ and $F=F_j$ as in Lemma~\ref{lem:periodic-building-block} with $\varepsilon=\varepsilon_j=\alpha r_{j-1}$.

Since $r_j\le \alpha r_{j-1}$ for all $j\in\NN$, $(r_j)_{j=1}^\infty$ and $(\varepsilon_j)_{j=1}^\infty$ decrease to zero. By induction, $\varepsilon_j \le \alpha^{j-i} r_i$ for all $0\le i<j$. Consequenlty, the function
\[
F=\sum_{j=1}^\infty F_j \tp\RR \to X
\]
is well-defined. Further, if for $k\ge 0$ we set
\[
G_k=F-F_k, \quad S_k=\sum_{j=1}^k F_j, \quad T_k=F-S_k,
\]
then
\begin{equation}\label{eq:TailEstimate}
\norm{T_k(t)}_X \le C_\alpha r_k, \qquad t\in\RR, \, k\ge 0.
\end{equation}

If $k\in\NN$ and $t$, $s\in\RR$ satisfy $\abs{t-s}\le r_k$, then
\[
\norm{F_j(t)-F_j(s)}_X \le \varepsilon_j \abs{t-s},\qquad \text{for all}\; j=1, \dots, k.
\]
Therefore,
\begin{equation}\label{eq:SumEstimate}
\norm{S_k(t)-S_k(s)}_X \le C_\alpha \abs{t-s},
\end{equation}
for all $k\ge 0$, and all $t,s\in\RR$ with $\abs{t-s}\le r_k.$

Fix $k\in\NN$ and $s$, $t\in\RR$. If $\abs{t-s}\le r_k$, combining \eqref{eq:TailEstimate} and \eqref{eq:SumEstimate} we obtain
\begin{align*}
\norm{F(t)-F(s)}_X
&\le \gamma \enpar{ \norm{S_k(t)-S_k(s)}_X +\norm{T_k(t)}_X + \norm{T_k(s)}_X}\\
&\le \gamma C_\alpha \enpar{ \abs{t-s} +2 r_k}\le 3 \gamma C_\alpha r_k\le \delta r_k.
\end{align*}

Given $s\in \RR$ and $k\in\NN$, pick $t\in\RR$ with $\abs{t-s}\le \varepsilon_k$ and
\[
\norm{F_k(t)-F_k(s)}_X\ge \frac{\varepsilon_k}{2\kappa^2}.
\]
Since $ \varepsilon_k\le r_{k-1}$, combining again \eqref{eq:TailEstimate} and \eqref{eq:SumEstimate} yields
\begin{align*}
\norm{G_k(t) - G_k(s) }_X
&\le \gamma \enpar{ \norm{S_{k-1}(t)-S_{k-1}(s)}_X + \norm{T_k(t)}_X + \norm{T_k(s)}_X}\\
&\le \gamma C_\alpha \enpar{\abs{t-s} + 2 r_k } \le 3 \gamma C_\alpha \varepsilon_k\le \eta \varepsilon_k.
\end{align*}
Summing up,
\begin{align*}
\norm{F(t)-F(s)}_X & \ge \frac{1}{\kappa} \norm{F_k(t)-F_k(s)}_X - \norm{G_k(t) - G_k(s) }_X\\
&\ge \varepsilon_k \enpar{ \frac{1}{2\kappa^3} - 3 \gamma C_\alpha }\ge \frac{(1-\eta) \varepsilon_k}{2\kappa^3}.
\end{align*}

We conclude by proving that $F$ is a Lipschitz function. Since it is bounded, it suffices to prove that $F$ is Lipschitz for small distances. Let $t$, $s\in\RR$ with $\abs{t-s}\le 1$. Choose $k\in\NN$ such that $r_{k}\le \abs{t-s}\le r_{k-1}$. Set $\rho=\kappa_4(X)$ in equation \eqref{kappam}. The expansion
\[
F(t)-F(s)=\enpar{S_{k-1}(t)-S_{k-1}(s)}+\enpar{F_k(t)-F_k(s)} + T_k(t)-T_k(s)
\]
yields, in light of \eqref{eq:TailEstimate}, \eqref{eq:SumEstimate} and the fact that $\Lip(F_k)\le 1$,
\begin{align*}
\norm{F(t)-F(s)}_X &\le \rho \enpar{ C_\alpha \abs{t-s} + \abs{t-s} + 2 C_\alpha r_k }\\
&\le \rho \enpar{3 C_\alpha +1} \abs{t-s}.\qedhere
\end{align*}
\end{proof}

We are now ready to prove our main results announced in the Introduction.

\begin{proof}[Proof of Theorem~\ref{thm:local-convexity}]
Clearly, \ref{it:TLC3} implies \ref{it:TLC4}. By \eqref{eq:UOvsUP} and \eqref{eq:LOvsLL}, \ref{it:TLC4} implies \ref{it:TLC2}. In turn, \ref{it:TLC2} implies \ref{it:TLC1} by Proposition~\ref{prop:multiscale-curve}. Assume that \ref{it:TLC1} holds, that is, $X$ is locally convex. Then there is a norm $\norm{\cdot}_0$ on $X$ and $a$, $b>0$ such that
\[
a \norm{x}_X\le \norm{x}_0\le b\norm{x}_X,
\quad x\in X.
\]
Let $F\tp\RR\to X$ be Lipschitz. Viewed as a curve with values in the metric space induced by $\norm{\cdot}_0$, its metric derivative $\md(F)(s)$ exists for almost every $s\in\RR$ by Theorem~\ref{thm:Kir}. At every such point $s$ we have
\[
\frac{a}{b}\limsup_{t\to s}\frac{\norm{F(t)-F(s)}_X}{|t-s|}
\le \frac{\md(F)(s)}{b}
\le \liminf_{t\to s}\frac{\norm{F(t)-F(s)}_X}{|t-s|}.
\]
Hence condition~\ref{it:TLC3} holds with $\delta=a/b$.
\end{proof}

\begin{proof}[Proof of Theorem~\ref{thm:RNP-characterization}]
\par
If $X$ is a Banach space with the RNP, then every Lipschitz curve $F\tp\RR\to X$ is differentiable almost everywhere. Since this latter property passes to isomorphic spaces, \ref{it:RNP3} implies \ref{it:RNP1}.

\ref{it:RNP1} trivially implies \ref{it:RNP2}. Assume now \ref{it:RNP2}. Let $\norm{\cdot}_0$ be a continuous quasi-norm on $X$ and $a$, $b>0$ such that
\[
a \norm{x}_X\le \norm{x}_0\le b\norm{x}_X,
\quad x\in X.
\]
Given a Lipschitz curve $F\tp\RR\to X$, pick a differentiability point $s\in\RR$. We have
\[
\frac{a}{b}\limsup_{t\to s}\frac{\norm{F(t)-F(s)}_X}{|t-s|}
\le \frac{\norm{F'(s)}_0}{b}
\le \liminf_{t\to s} \frac{\norm{F(t)-F(s)}_X}{|t-s|}.
\]
Then, by Theorem~\ref{thm:local-convexity}, $X$ is locally convex. So, $X$ is isomorphic to a Banach space $Y$. Since the property in \ref{it:RNP2} is invariant under linear isomorphisms, every Lipschitz curve from an interval into $Y$ has a point of differentiability, and hence a point of $\varepsilon$-differentiability for every $\varepsilon>0$. By the classical characterization of the Radon--Nikod\'ym property in terms of $\varepsilon$-differentiability of Lipschitz curves (see \cite{BenLin}*{Theorem~5.21}), $Y$ has the RNP. This proves that \ref{it:RNP1} holds.
\end{proof}

Choose a basis point $t_0$ in the real interval $I$. Then, given a quasi-Banach space $X$, the linear space
\[
\Lip_0(I,X) =\enbrace{ F\tp I\to X \tq F \mbox{ is Lipschitz and }F(t_0)=0}
\]
equipped with the quasi-norm $\Lip(\cdot)$ is a quasi-Banach space. By Theorem~\ref{thm:local-convexity}, the space
\[
\LipD_0(I,X) =\enbrace{ F\in \Lip_0(I,X) \tq F \mbox{ is differentiable a.\ e.\@ on }I}
\]
is proper subspace of $\Lip_0(I,X)$ unless $X$ is locally convex. To prove that it closed, we will use that the linear map
\[
\Dt\tp \LipD_0(I,X) \to L_\infty(I,X), \qquad F\mapsto F^\prime
\]
is a contraction.

\begin{proposition}\label{prop:Dp-closed}
Let $X$ be a quasi-Banach space. Then $\LipD_0(I,X)$ is a closed subspace of $\Lip_0(I,X)$.
\end{proposition}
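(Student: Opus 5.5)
The plan is to take a sequence $(F_n)_{n=1}^\infty$ in $\LipD_0(I,X)$ converging to some $F\in\Lip_0(I,X)$ with respect to $\Lip(\cdot)$ and show that $F$ is differentiable almost everywhere. Because $\Dt$ is a contraction, i.e.\ $\norm{F_n'-F_m'}_{L_\infty(I,X)}\le \Lip(F_n-F_m)$ (a difference quotient bounded by $\Lip$ passes to its limit, at least after replacing $\norm{\cdot}_X$ by an equivalent continuous quasi-norm, which changes constants only), the sequence $(F_n')$ is Cauchy in $L_\infty(I,X)$. Hence there is a null set $N\subset I$ outside of which every $F_n$ is differentiable, and on $I\setminus N$ the derivatives $F_n'$ converge uniformly to a function $f$ with
\[
\sup_{s\in I\setminus N}\norm{F_n'(s)-f(s)}_X\le \limsup_m \Lip(F_n-F_m)\le \kappa\,\Lip(F_n-F).
\]
The $L_\infty$ bound holds only a.e., but since we take countably many $n,m$, we can discard one common null set and work pointwise on its complement.

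Now fix $s\in I\setminus N$ and $t\ne s$. Split
\[
\frac{F(t)-F(s)}{t-s}-f(s)=\frac{(F-F_n)(t)-(F-F_n)(s)}{t-s}+\Bigl(\frac{F_n(t)-F_n(s)}{t-s}-F_n'(s)\Bigr)+\bigl(F_n'(s)-f(s)\bigr),
\]
and use the modulus $\kappa_3(X)$ to bound its quasi-norm by $\kappa_3(X)$ times the sum of three terms. The first is at most $\Lip(F-F_n)$ uniformly in $t$, and the third is at most $\kappa\Lip(F-F_n)$. Given $\varepsilon>0$, choose $n$ making both smaller than $\varepsilon$; then let $t\to s$, so the middle term tends to $0$ because $F_n$ is differentiable at $s$. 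This gives
\[
\limsup_{t\to s}\norm{\frac{F(t)-F(s)}{t-s}-f(s)}_X\le 2\kappa_3(X)\,\varepsilon,
\]
so $F'(s)=f(s)$ exists for every $s\notin N$. Thus $F\in\LipD_0(I,X)$.

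The main obstacle is the first step: making the contraction property of $\Dt$, which is stated in $L_\infty$ and so holds only a.e., yield uniform convergence of the derivatives on a fixed set of full measure. The countable union of exceptional sets handles this. One must also check that the limit $f$ is well defined pointwise, which follows from the completeness of $X$. The possible discontinuity of the quasi-norm is harmless here, because all the estimates go through the quasi-triangle inequality with constants $\kappa_m(X)$.
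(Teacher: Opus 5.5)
Your proposal is correct and follows essentially the same route as the paper. You use the contraction $\Dt$ to make $(F_n')$ Cauchy, pass to a common full-measure set on which every $F_n$ is differentiable and $F_n'\to f$ uniformly, and then apply the same three-term splitting controlled by $\kappa_3(X)$, letting $t\to s$ and then $n\to\infty$. Obtaining $f$ pointwise from the completeness of $X$, instead of from the completeness of $L_\infty(I,X)$, is only a cosmetic difference.
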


\begin{proof}
Let $(F_k)_{k=1}^{\infty}$ be a sequence in $\LipD_0(I,X)$ and suppose that
\[
\lim_k \Lip(F_k-F)=0
\]
for some $F\in\Lip_0(I,X)$. Since $(F_k^\prime)_{k=1}^\infty$ is a Cauchy sequence in $L_\infty(I,X)$, there is $f\in L_\infty(I,X)$ such that $\lim_k \norm{F_k^\prime-f}_\infty=0$.

For every $k$, let $E_k\subset I$ be a set of full measure such that $F_k$ is differentiable at every point of $E_k$. Let $E_0$ be a set of full measure such that $\lim_k F_k^\prime=f$ uniformly on $I\setminus E_0$. Then, the set
\[
E=\bigcap_{k=0}^{\infty}E_k.
\]
has full measure in $I$. Fix $s\in E$. Set $\gamma=\kappa_3(X)$. The expansion
\begin{multline*}
D(t):=\frac{F(t)-F(s)}{t-s}-f(s)=\enpar{\frac{F_k(t)-F_k(s)}{t-s}-F_k^\prime(s)} \\
+ \frac{(F(t)-F_k(t))-(F(s)-F_k(s))}{t-s}+\enpar{F_k^\prime(s)-f(s)},
\end{multline*}
gives for each $t\in I$ and $k\in\NN$ the estimate
\[
\frac{\norm{D(t)}_X}{\gamma} \le \norm{\frac{F_k(t)-F_k(s)}{t-s}-F_k^\prime(s)}_X + \Lip(F_k-F)+\norm{F_k^\prime-f}_\infty.
\]
Letting $t$ go to $s$ and then $k$ to infinity gives
\[
\limsup_{t\to s} \norm{D(t)}_X \le \gamma \liminf_k \enpar{\Lip(F_k-F)+\norm{F_k^\prime-f}_\infty}=0.\qedhere
\]
\end{proof}

Let us finish this section with an observation pointing to the difficulty of using metric derivatives to study Lipschitz maps into quasi-metric spaces.

\begin{corollary}
Let $X$ be a quasi-Banach space. Assume that every Lipschitz map $F\tp\RR\to X$ is metric differentiable at some point $s\in\RR$. Then $X$ is locally convex.
\end{corollary}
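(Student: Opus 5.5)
We argue by contraposition and let Proposition~\ref{prop:multiscale-curve} do the work, essentially as in the proof that condition \ref{it:TLC2} implies \ref{it:TLC1} in Theorem~\ref{thm:local-convexity}. Suppose $X$ is not locally convex, and let $\kappa$ be the modulus of concavity of its quasi-norm. Fix $0<\eta<1$ small enough that
\[
\eta<\frac{1-\eta}{2\kappa^3},
\]
which is possible because the right-hand side tends to $1/(2\kappa^3)>0$ as $\eta\to0^+$. Proposition~\ref{prop:multiscale-curve} then gives a Lipschitz curve $F\tp\RR\to X$ and sequences $(r_j)$, $(\varepsilon_j)$ decreasing to zero. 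We will show that $F$ is metric differentiable at no point $s\in\RR$, which contradicts the hypothesis.

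Fix $s\in\RR$. For the upper scales, the second estimate of the proposition gives, for each $j$, a point $t_j\in B_{\RR}(s,\varepsilon_j)$ with $t_j\neq s$ and
\[
\norm{F(t_j)-F(s)}_X\ge \frac{1-\eta}{2\kappa^3}\,\varepsilon_j-\frac1j\varepsilon_j .
\]
Since $\abs{t_j-s}\le\varepsilon_j$, this yields
\[
\frac{\norm{F(t_j)-F(s)}_X}{\abs{t_j-s}}\ge \frac{1-\eta}{2\kappa^3}-\frac1j .
\]
Also $t_j\to s$, so
\[
\limsup_{t\to s}\frac{\norm{F(t)-F(s)}_X}{\abs{t-s}}\ge\frac{1-\eta}{2\kappa^3}.
\]

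The lower scales require a little care, and this is the main point to handle. The first estimate only controls the supremum over the whole ball $B_{\RR}(s,r_j)$ divided by the radius $r_j$. To turn it into a bound on an actual difference quotient, we evaluate at a point at distance comparable to the radius. Take $u_j=s+r_j/2$, which lies in the ball whether it is open or closed. Then
\[
\norm{F(u_j)-F(s)}_X\le \eta r_j ,
\]
so
\[
\frac{\norm{F(u_j)-F(s)}_X}{\abs{u_j-s}}\le 2\eta .
\]
This only costs a factor of $2$, so we adjust the choice of $\eta$ at the start to require
\[
2\eta<\frac{1-\eta}{2\kappa^3}.
\]
Since $u_j\to s$, it follows that
\[
\liminf_{t\to s}\frac{\norm{F(t)-F(s)}_X}{\abs{t-s}}\le 2\eta<\frac{1-\eta}{2\kappa^3}\le\limsup_{t\to s}\frac{\norm{F(t)-F(s)}_X}{\abs{t-s}} .
\]

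Consequently the limit defining $\md(F)(s)$, taken with respect to the quasi-distance $\norm{x-y}_X$, does not exist. As $s$ was arbitrary, $F$ has no point of metric differentiability, which contradicts the hypothesis. Hence $X$ is locally convex. We also observe that if a point $s$ of metric differentiability existed, the upper and lower limits above would coincide, so $s$ would witness condition~\ref{it:TLC4} of Theorem~\ref{thm:local-convexity} with $\delta=1$ for that particular curve. The contradiction obtained here is exactly the failure of that condition, so one could alternatively cite the theorem directly.
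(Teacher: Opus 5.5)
Your argument is correct and is essentially the paper's proof. The paper simply observes that a point of metric differentiability gives $\limsup=\liminf$ there, so condition~\ref{it:TLC4} of Theorem~\ref{thm:local-convexity} holds with $\delta=1$; you have unpacked the implication \ref{it:TLC4}$\Rightarrow$\ref{it:TLC1} of that theorem via Proposition~\ref{prop:multiscale-curve}, as your closing remark notes, and your conversion of ball-supremum bounds into actual difference-quotient bounds (at the cost of the factor $2$) plays the role of \eqref{eq:UOvsUP}--\eqref{eq:LOvsLL} in the paper's proof of that theorem.
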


\begin{proof}
It is immediate from characterization~\ref{it:TLC4} of local convexity in Theorem~\ref{thm:local-convexity}.
\end{proof}
\section{Differentiation and integration of curves in \texorpdfstring{$\ell_{p}$}{}, \texorpdfstring{$0<p<1$}{}}\label{sec:new}\noindent
For Banach spaces, differentiation and integration go hand by hand, and the main drawback in implementing the proof of Dunford and Morse's theorem from \cite{DunMor} to be able to guarantee differentiability in Theorem~\ref{thm:AA} is the fact that Bochner integration does not make sense in non-locally convex spaces. Pettis-integrability is of no help in this task. In fact, a Lipschitz function $F\tp I \to X$, where $X$ is a quasi-Banach space is a.e.\@ weakly differentiable with weak derivative $f\tp I \to X$ if and only if
\[
x^*(F(t))-x^*(F(s))=\int_s^t x^*(f(u))\, du
\]
for all $x^*\in X^*$ and $s$, $t\in I$.

Roughly speaking we could say that we cannot differentiate quasi-Banach valued functions because we do not have a proper way to integrate them. This and other issues related to defining a sensible integral with values in a $p$-normed space have been discussed in the articles \cites{AlbiacAnsorenaJFA1,AlbiacAnsorenaJFA2}.

In \cite{Vogt1967} Vogt introduced a concept of integrability that tries to replace to the Bochner integral when dealing with functions with values in nonlocally convex quasi-Banach spaces. Let $(\Omega,\Sigma,\mu)$ be a measure space, $0<p<1$, and $X$ a $p$-Banach space. A function $f\tp\Omega \to X$ is said to be \emph{integrable in the sense of Vogt}, and we write $\displaystyle f\in L^{1}_{V}(\mu,X)$ (also, $ f\in L^{1}_{V}(I,X)$ when $\mu$ is the Lebesgue measure on a subset $I\subset\RR^d$) if $f$ admits an expression of the following guise
\begin{equation}\label{Vogtdecomposition}
f(t)=\sum_{n=1}^{\infty} x_n \, f_n(t),\qquad \mbox{a.e. } t\in I,
\end{equation}
where $\bm{x}=(x_n)_{n=1}^{\infty}$ in $X$ and $\bm{f}=(f_n)_{n=1}^{\infty}$ in $ L_{1}(\mu,\RR)$ verify the condition
\begin{equation} \label{Vogtcondition}
N(\bm{x},\bm{f})=\sum_{n=1}^{\infty} \Vert x_n\Vert^p \Vert f_n\Vert_1^p <\infty.
\end{equation}
Note that \eqref{Vogtdecomposition} implies a.e.\@ unconditional convergence of the series in \eqref{Vogtcondition}. The space $L^{1}_{V}(\mu,X)$ equipped with the gauge
\[
\Vert f\Vert_{1,V} =\inf\left\{N(\bm{x},\bm{f})^{1/p} \tq \eqref{Vogtdecomposition}
\text{ and } \eqref{Vogtcondition} \text{ hold}\right\}
\]
is a $p$-Banach space. Moreover, for $E\in\Sigma$ the expression
\[
\sum_{n=1}^{\infty} x_n\int_E f_n\, d\mu
\]
does not depend on the decomposition \eqref{Vogtdecomposition} chosen for $f$, and so it is consistent to define the \textit{Vogt integral} of $f$ on $E$ as
\[
\int_E f\, d\mu=\sum_{n=1}^{\infty} x_n\int_E f_n\, d\mu.
\]

\begin{proposition}[\cite{AlbiacAnso2016}*{Proposition 4.4}]\label{prop:AA-strong}
Let $X$ be a quasi-Banach space with the point separation property, and let $F\tp I\to X$ be a Lipschitz mapping. Suppose that $F$ is weakly differentiable almost everywhere with weak derivative $f\tp I\to X$. If $f$ is Vogt-integrable, then $F$ is differentiable almost everywhere and $F'(t)=f(t)$ for a.e.\@ $t\in I$.
\end{proposition}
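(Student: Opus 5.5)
The strategy is to mimic the classical proof that Bochner-integrable indefinite integrals are differentiable a.e., using a Vogt decomposition of $f$ in place of the Bochner integral. Throughout we may assume, after renorming, that $\norm{\cdot}_X$ is a $p$-norm.

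\emph{Step 1: $F$ is the indefinite Vogt integral of $f$.} Write $f=\sum_n x_n f_n$ as in \eqref{Vogtdecomposition} with $N(\bm{x},\bm{f})<\infty$. Fix $a\in I$ and set $G(t)=\int_a^t f=\sum_n x_n \int_a^t f_n$. The series converges in $X$ because
\[
\sum_n \norm{x_n}^p\abs{\textstyle\int_a^t f_n}^p\le N(\bm{x},\bm{f}).
\]
For $x^*\in X^*$ we have
\[
x^*(G(t))=\sum_n x^*(x_n)\int_a^t f_n=\int_a^t x^*(f(u))\,du,
\]
where the interchange is justified by dominated convergence. Indeed, $\sum_n\abs{x^*(x_n)}\abs{f_n}$ is integrable, since
\[
\sum_n \norm{x_n}\norm{f_n}_1\le \Bigl(\sum_n \norm{x_n}^p\norm{f_n}_1^p\Bigr)^{1/p}
\]
because $p<1$. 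By the characterization of weak differentiability recalled above, $x^*(F(t)-F(a))=x^*(G(t))$ for all $x^*$. Since $X$ has the point separation property, $F(t)=F(a)+G(t)$.

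\emph{Step 2: reduction to a tail estimate.} Fix $m$ and split $f=f^{(m)}+R_m$, where $f^{(m)}=\sum_{n\le m}x_nf_n$ and $R_m=\sum_{n>m}x_nf_n$. The finite part is harmless. At every common Lebesgue point $s$ of $f_1,\dots,f_m$, each scalar quotient $\frac1{t-s}\int_s^t f_n$ tends to $f_n(s)$, so
\[
\frac1{t-s}\int_s^t f^{(m)}\to f^{(m)}(s).
\]
Using $p$-subadditivity, at such $s$ we then get
\[
\limsup_{t\to s}\norm{\frac{F(t)-F(s)}{t-s}-f(s)}^p\le \limsup_{t\to s}\sum_{n>m}\norm{x_n}^p\Bigl(\frac{1}{\abs{t-s}}\int_s^t\abs{f_n}\Bigr)^p+\norm{R_m(s)}^p .
\]

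\emph{Step 3: the tail terms (main obstacle).} We must show that both tail terms become small as $m\to\infty$, outside a set of small measure. The second term is easy. The function $\sum_n\norm{x_n}^p\abs{f_n}^p$ dominates $\norm{R_m}^p$ in the tail sense, so it suffices that $\sum_{n>m}\norm{x_n}^p\abs{f_n(s)}^p\to0$ a.e. Since $\sum_n\norm{x_n}\abs{f_n}$ is in $L_1$ by the estimate in Step 1, it is finite a.e. But finiteness of $\sum_n\norm{x_n}\abs{f_n(s)}$ does not directly control the $p$-sum, so I would instead use that $\sum_n \norm{x_n}^p\abs{f_n(s)}^p<\infty$ a.e. This follows from Hölder/Jensen over a finite-measure piece $J\subset I$, via $\int_J\abs{f_n}^p\le\abs{J}^{1-p}\norm{f_n}_1^p$.

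The first term is the heart of the matter: we need a maximal inequality. Let $M$ denote the Hardy--Littlewood maximal operator. We want to show that $\Phi_m(s):=\sum_{n>m}\norm{x_n}^p (Mf_n(s))^p$ tends to $0$ a.e. For this I would use the weak-type $(1,1)$ bound, which gives $\int_J (Mf_n)^p\le C_{p,J}\norm{f_n}_1^p$ for $p<1$ on bounded $J$ (Kolmogorov's inequality). Hence $\int_J\Phi_m\le C\sum_{n>m}\norm{x_n}^p\norm{f_n}_1^p\to0$. Since $\Phi_m$ decreases in $m$, this gives $\Phi_m\to0$ a.e.

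\emph{Conclusion.} Intersecting the countably many full-measure sets (Lebesgue points of all $f_n$, the points where $\Phi_m\to0$, and the points where the representation of $f(s)$ holds) and letting $m\to\infty$ shows that $F'(s)=f(s)$ for a.e. $s$.
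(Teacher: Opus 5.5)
Your proof is correct and complete. The paper does not actually prove this statement: it quotes it from \cite{AlbiacAnso2016}. Section~\ref{sec:new} only indicates the underlying idea, namely that Vogt integration should take over the role that Bochner integration plays in the Dunford--Morse argument.

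Your proof carries out exactly that programme in two stages. First, the point separation property identifies $F-F(a)$ with the indefinite Vogt integral of $f$. This is the only place the hypothesis is used, and it is genuinely needed: in $L_p[0,1]$ every Lipschitz curve has weak derivative $0$, yet nowhere differentiable ones exist by Theorem~\ref{thm:RNP-characterization}. Second, a Lebesgue differentiation theorem for Vogt-integrable functions finishes the proof.

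What your write-up adds beyond the outline in the paper is a self-contained proof of that differentiation theorem. The tail term $\sum_{n>m}\|x_n\|^p(Mf_n)^p$ is controlled in $L_1(J)$ by Kolmogorov's inequality, and the pointwise tail $\sum_{n>m}\|x_n\|^p|f_n|^p$ by the Jensen bound $\int_J|f_n|^p\le|J|^{1-p}\|f_n\|_1^p$. Both steps rely on $p<1$. For $p=1$ the maximal function of an $L_1$ function need not be locally integrable, so you would have to fall back on the usual weak-type density argument. Note also that one cannot simply apply the maximal operator to $|f_n|^p$, since Jensen's inequality points the wrong way there. Summing the $p$-th powers of the individual maximal functions, as you do, is the right fix.
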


This result motivates studying conditions that ensure that the weak derivative of a Lipschitz map is Vogt-integrable. If $X$ has a normalized basis $(e_n)_{n=1}^\infty$ with coordinate functionals $(e_n^*)_{n=1}^\infty$, then any function $f\tp I\to X$ has a canonical expansion
\[
f=\sum_{n=1}^\infty e_n^*(f) \, e_n.
\]
Thus, a natural sufficient condition for Vogt-integrability is
\begin{equation*}
\sum_{n=1}^\infty \norm{ e_n^*(f) }_{L_1(I)}^p<\infty.
\end{equation*}
This observation leads to the following consequence of Proposition~\ref{prop:AA-strong}.

\begin{corollary}\label{cor:AA-Vogt}
Let $0<p\le 1$, $X$ be a $p$-Banach space with a basis and
\[
F=\sum_{n=1}^\infty F_n\, e_n \tp I \to X
\]
be a Lipschitz map. If
\begin{equation}\label{eq:CoordinateVogt}
\sum_{n=1}^\infty \|F_n^{\prime}\|_{L^1(I)}^{\,p}<\infty,
\end{equation}
then $F$ is differentiable almost everywhere and $F^\prime=\sum_{n=1}^\infty F_n^{\prime} e_n$.
\end{corollary}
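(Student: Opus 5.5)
The plan is to reduce the statement to Proposition~\ref{prop:AA-strong}. That requires three things: that $X$ has the point separation property, that $F$ is a.e.\ weakly differentiable, and that its weak derivative $f=\sum_n F_n' e_n$ is Vogt integrable.

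\emph{Point separation.} Since $X$ has a basis, the coordinate functionals $e_n^*$ belong to $X^*$. If $x\neq 0$, some $e_n^*(x)\neq 0$, so $X$ has the point separation property.

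\emph{Coordinates and the candidate derivative.} Each coordinate $F_n=e_n^*\circ F$ is a scalar Lipschitz function, so $F_n'$ exists a.e.\ and $\norm{F_n'}_\infty\le \norm{e_n^*}\Lip(F)$. We may assume the basis is normalized; otherwise replace $e_n$ by $e_n/\norm{e_n}$ and $F_n$ by $\norm{e_n}F_n$. This rescaling changes the sum in \eqref{eq:CoordinateVogt}, but for a Schauder basis $\sup_n\norm{e_n}<\infty$ need not hold. I would therefore keep the vectors $x_n=e_n$ and the functions $f_n=F_n'\in L_1(I)$ directly, with $N(\bm x,\bm f)=\sum_n\norm{e_n}^p\norm{F_n'}_1^p$. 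This is finite if the basis is seminormalized, which I will take as the standing reading of the hypothesis, since the paper's preceding remark speaks of a normalized basis.

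Define $f(t)=\sum_n F_n'(t)e_n$. By the $p$-triangle inequality, $\sum_n\norm{F_n'(t)e_n}^p<\infty$ for a.e.\ $t$: integrate and use $\int|g|^p\le |I|^{1-p}\norm{g}_1^p$ on bounded $I$, or localize to bounded subintervals in general. Hence the series defining $f$ converges absolutely a.e. It follows that $f\in L^1_V(I,X)$, with Vogt integral $\int_s^t f=\sum_n (F_n(t)-F_n(s))e_n=F(t)-F(s)$.

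\emph{Weak derivative.} I need $x^*(F(t))-x^*(F(s))=\int_s^t x^*(f)$ for every $x^*\in X^*$. Since $x^*$ is continuous, $x^*(f)=\sum_n F_n'\,x^*(e_n)$ a.e. The series $\sum_n\abs{x^*(e_n)}\norm{F_n'}_1$ converges because $\ell_p\subset\ell_1$ and $\abs{x^*(e_n)}\le\norm{x^*}\norm{e_n}$. Therefore dominated convergence lets me interchange sum and integral, giving
\[
\int_s^t x^*(f)=\sum_n x^*(e_n)\bigl(F_n(t)-F_n(s)\bigr)=x^*\bigl(F(t)-F(s)\bigr).
\]
So $f$ is the a.e.\ weak derivative of $F$.

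\emph{Conclusion.} Proposition~\ref{prop:AA-strong} now gives that $F$ is differentiable a.e.\ with $F'=f$.

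The main obstacle is verifying Vogt integrability when the basis is not normalized. I expect to handle it by the seminormalized reading above. Failing that, I would absorb the factor $\norm{e_n}$ into the coordinate functions when checking $N(\bm x,\bm f)<\infty$.
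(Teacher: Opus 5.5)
Your proposal is correct and follows the paper's route. The canonical decomposition $x_n=e_n$, $f_n=F_n'$ makes the formal derivative Vogt integrable, and Proposition~\ref{prop:AA-strong} finishes the argument; your checks of point separation and of the identity $x^*(F(t)-F(s))=\int_s^t x^*(f)$ fill in steps the paper leaves implicit. Your bounded-basis reading (the paper's preceding paragraph assumes a normalized basis) is in fact necessary: take a nowhere differentiable Lipschitz curve $G\colon\RR\to\ell_p$ (Theorem~\ref{thm:RNP-characterization}) and the basis $e_n=2^n u_n$, where $(u_n)$ is the unit vector basis; the coordinates $F_n=2^{-n}u_n^*\circ G$ satisfy $\norm{F_n'}_{L_1[0,1]}\le 2^{-n}\Lip(G)$, so \eqref{eq:CoordinateVogt} holds on $[0,1]$ while the conclusion fails, and your fallback of absorbing $\norm{e_n}$ into the coordinates therefore cannot work.
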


Corollary~\ref{cor:AA-Vogt} shows that differentiability almost everywhere follows from structural assumptions on the coordinates of a Lipschitz map $F\tp I\to \ell_p$. In \cite{AlbiacAnso2016}*{Corollary~4.6} one such assumption is that each coordinate function is monotone. For later use, it is convenient to isolate a slightly more flexible hypothesis. Namely, instead of monotonicity, we impose a quantitative non-degeneracy condition on the derivative of each coordinate, namely that its absolute value does not oscillate too much. This includes monotone coordinates as a special case (on intervals where they are differentiable a.e.\@ with essentially constant sign), but also allows mild oscillations while retaining enough control to deduce Vogt integrability of the formal derivative.

\begin{proposition}
Let $0<p<1$ and let $F=(F_n)_{n=1}^\infty\tp I\to \ell_p$ be a Lipschitz map. Suppose that there exists a constant $C\in[1,\infty)$ such that for every $n$,
\begin{equation*}
M_n:= \esssup_{t\in I} |F_n'(t)| \le C \essinf_{t\in I} |F_n'(t)|.
\end{equation*}
Then $F$ is differentiable almost everywhere on $I$.
\end{proposition}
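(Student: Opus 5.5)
The plan is to verify the coordinatewise Vogt condition \eqref{eq:CoordinateVogt} and then invoke Corollary~\ref{cor:AA-Vogt} with $X=\ell_p$ and its unit vector basis. Since almost-everywhere differentiability is a local property, I will first reduce to bounded intervals. I write $I$ as a countable union of bounded closed subintervals $J$. On each $J$, the restriction $F|_J$ is Lipschitz with $L=\Lip(F)$, and the hypothesis still holds on $J$, because the essential supremum of $|F_n'|$ only decreases and the essential infimum only increases when passing to a subinterval. It therefore suffices to show that $\sum_n \norm{F_n'}_{L_1(J)}^p<\infty$ for each bounded $J$. Since $\norm{F_n'}_{L_1(J)}\le |J|\, M_n$, this reduces to proving the single estimate
\[
\sum_{n=1}^\infty M_n^p<\infty .
\]

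To get this bound I will use a pointwise Fatou argument, which relies only on the lower bound on $|F_n'|$. Each coordinate functional is bounded on $\ell_p$, so every $F_n$ is real-valued and Lipschitz, hence differentiable almost everywhere. Intersecting countably many full-measure sets gives a full-measure set $E\subset I$ on which every $F_n$ is differentiable and, for every $n$, $|F_n'(t)|\ge \essinf|F_n'|\ge M_n/C$. Fix $t\in E$ and $h\to 0$. The Lipschitz condition in $\ell_p$ reads
\[
\sum_{n=1}^\infty \left|\frac{F_n(t+h)-F_n(t)}{h}\right|^p=\frac{\norm{F(t+h)-F(t)}_p^p}{|h|^p}\le L^p .
\]
Applying Fatou's lemma to the counting measure on $\NN$ yields $\sum_n |F_n'(t)|^p\le L^p$. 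Combining this with the lower bound gives $\sum_n M_n^p\le C^pL^p$.

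Consequently $\sum_n\norm{F_n'}_{L_1(J)}^p\le C^pL^p|J|^p<\infty$. Corollary~\ref{cor:AA-Vogt} then shows that $F|_J$ is differentiable almost everywhere, with $F'=\sum_n F_n'e_n$. Taking the countable union over the subintervals $J$ gives the result on all of $I$.

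The only delicate step is obtaining summability of the $M_n^p$. A tempting route is to bound $|F_n(t)-F_n(s)|$ from below by $M_n|t-s|/C$, but this fails: the hypothesis controls $|F_n'|$, not the sign of $F_n'$, so a coordinate may zigzag and have small increments. The pointwise Fatou argument at a single common differentiability point avoids this, since the lower bound $|F_n'(t)|\ge M_n/C$ holds at almost every point regardless of sign.
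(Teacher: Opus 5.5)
Your proof is correct and is essentially the paper's argument. The pointwise lower bound $\abs{F_n'(t)}\ge M_n/C$ at a common differentiability point gives $\sum_n M_n^p\le C^p\Lip(F)^p$, and then $\norm{F_n'}_{L_1(J)}\le\abs{J}M_n$ on bounded intervals feeds Corollary~\ref{cor:AA-Vogt}; your write-up is in fact cleaner than the paper's, which conflates $M_n$ with $\essinf\abs{F_n'}$ in the pointwise bound, writes $\norm{F(t)}_p$ where the norm of the formal derivative is meant (your Fatou step is exactly what justifies $\sum_n\abs{F_n'(t)}^p\le\Lip(F)^p$), and handles unbounded $I$ only by a ``without loss of generality''.
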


\begin{proof}
Assume without loss of generality that $I$ is bounded, that $0\in I$, and that $F(0)=0$. There is a zero-measure set $N$ such that $M_n\le |F_n'(t)|$ for all $n\in\NN$ and $t\in I\setminus N$. We have
\[
\enpar{\sum_{n=1}^\infty M_n^p}^{1/p} \le\enpar{ \sum_{n=1}^\infty |F_n'(t)|^p}^{1/p} =\norm{F(t)}_p \le \Lip(F).
\]
In turn, for all $n\in\NN$,
\[
\norm{F_n'}_{L_1(I)} \le \abs{I} \norm{F_n'}_{L_\infty(I)}\le C \abs{I} M_n.
\]
Applying Corollary~\ref{cor:AA-Vogt} puts an end to the proof.
\end{proof}

Corollary~\ref{cor:AA-Vogt} exemplifies the convenience to easily recognize the Vogt integrability of functions in a $p$-Banach space $X$. For $X=\ell_{p}$, Vogt-integrability has a particularly simple exact description.

\begin{proposition}\label{prop:vogt-lp}
Let $0<p<1$, let $I$ be a bounded interval, and let $f=(f_n)_{n=1}^{\infty}\tp I\to \ell_p$ be measurable. Then $f\in L_V^1(I,\ell_p)$ if and only if
\begin{equation}\label{Vogtintcondition}
\sum_{n=1}^{\infty} \|f_n\|_{L_1(I)}^p<\infty .
\end{equation}
Further, $\Vert f\Vert_{1,V}=\sum_{n=1}^{\infty} \|f_n\|_{L_1(I)}^p$.
\end{proposition}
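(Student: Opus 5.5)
Both implications of the statement, together with the formula for the gauge, reduce to two inequalities: one for the sufficiency direction and one for the necessity direction. Note that by the definition of $\Vert\cdot\Vert_{1,V}$ through $N(\bm{x},\bm{f})^{1/p}$, the claimed identity should be read at the level of $p$-th powers, i.e. $\Vert f\Vert_{1,V}^p=\sum_n \|f_n\|_{L_1(I)}^p$; I will prove it in that form.

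\emph{Sufficiency and the upper bound.} Assume \eqref{Vogtintcondition}. Take the canonical decomposition $x_n=e_n$ and $f_n$ itself as the scalar coefficient function. Then $f(t)=\sum_n f_n(t)e_n$ for every $t$, the series converging in $\ell_p$ because $f(t)\in\ell_p$. Moreover each $f_n\in L_1(I)$, since every term of the convergent series \eqref{Vogtintcondition} is finite. Hence \eqref{Vogtdecomposition} and \eqref{Vogtcondition} hold, and $N(\bm{x},\bm{f})=\sum_n\|f_n\|_{L_1(I)}^p$. This gives $f\in L^1_V(I,\ell_p)$ together with $\Vert f\Vert_{1,V}^p\le\sum_n\|f_n\|_1^p$.

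\emph{Necessity and the lower bound.} Let $f=\sum_k x_k g_k$ a.e. be any Vogt decomposition with $N=\sum_k\|x_k\|_p^p\|g_k\|_1^p<\infty$. The plan is to project onto each coordinate. Since the coordinate functional $e_n^*$ is continuous on $\ell_p$ and the series converges a.e., we get
\[
f_n(t)=\sum_k e_n^*(x_k)\,g_k(t) \quad\text{a.e. } t\in I.
\]
We want
\[
\|f_n\|_1\le\Bigl(\sum_k |e_n^*(x_k)|^p\|g_k\|_1^p\Bigr)^{1/p}.
\]
To obtain it, first use the triangle inequality in $L_1$ after a limiting argument, which gives $\|f_n\|_1\le\sum_k|e_n^*(x_k)|\,\|g_k\|_1$. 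Then use $\ell_p\subset\ell_1$ with $\|\cdot\|_1\le\|\cdot\|_p$, which turns this into the displayed bound. Raising to the power $p$ and summing over $n$, Tonelli lets us interchange the sums:
\[
\sum_n\|f_n\|_1^p\le\sum_k\|g_k\|_1^p\sum_n|e_n^*(x_k)|^p=\sum_k\|x_k\|_p^p\|g_k\|_1^p=N.
\]
Taking the infimum over all decompositions gives \eqref{Vogtintcondition} and $\sum_n\|f_n\|_1^p\le\Vert f\Vert_{1,V}^p$. Combined with the upper bound, this proves the equality.

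\emph{Main obstacle.} The delicate point is justifying the $L_1$ triangle inequality for the infinite a.e.-convergent sum $\sum_k e_n^*(x_k)g_k$. The plan is to note that $\sum_k|e_n^*(x_k)|\,\|g_k\|_1\le N^{1/p}<\infty$, so the series converges absolutely in $L_1(I)$. Its $L_1$-limit agrees a.e. with its pointwise a.e. limit $f_n$, by passing to a subsequence of partial sums that converges a.e. Hence $\|f_n\|_1$ is bounded by the sum of the norms, as required. Measurability of $f$ is only used to make sense of the $f_n$ as measurable functions, so that $\|f_n\|_{L_1(I)}$ is defined. Boundedness of $I$ is not needed in this argument.
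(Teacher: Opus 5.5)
Your proof is correct and follows essentially the same route as the paper. Sufficiency and the upper bound come from the canonical decomposition. For necessity you project a Vogt decomposition onto each coordinate, apply the $L_1$ triangle inequality and the $p$-subadditivity $(\sum_k a_k)^p\le\sum_k a_k^p$ (the contractivity of $\ell_p\subset\ell_1$), and interchange the sums.

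You are also right that the identity must be read as $\Vert f\Vert_{1,V}^p=\sum_n\|f_n\|_{L_1(I)}^p$, since $\Vert\cdot\Vert_{1,V}$ is defined through $N(\bm{x},\bm{f})^{1/p}$.
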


\begin{proof}
If suffices to prove that \eqref{Vogtintcondition} is a necessary condition for $f$ to be Vogt-integrable. Choose a Vogt decomposition
\[
f=\sum_{j=1}^{\infty}x_j \, g_j
\]
where $x_j\in\ell_p$, $g_j\in L_1(I)$, and
\[
\sum_{j=1}^{\infty} \|x_j\|_p^p\,\|f_j\|_{L_1(I)}^p<\infty.
\]
Write
\[
x_j=(x_{j,n})_{n=1}^{\infty}, \quad j\in\NN.
\]
Then, for each $n\in\NN$,
\[
f_n=\sum_{j=1}^{\infty}x_{j,n}\, g_j.
\]
Since the inclusion $\ell_p\subset\ell_1$ is a contraction,
\begin{align*}
\sum_{n=1}^{\infty} \|f_n\|_{L_1(I)}^p&\le \sum_{n=1}^\infty \enpar{ \sum_{j=1}^{\infty} \abs{x_{j,n}} \, \norm{g_j}_{L_1(I)}}^p
\le \sum_{n=1}^\infty \sum_{j=1}^{\infty} \abs{x_{j,n}}^p \norm{g_j}_{L_1(I)}^p\\
&=\sum_{j=1}^{\infty} \norm{g_j}_{L_1(I)}^p \sum_{n=1}^\infty \abs{x_{j,n}}^p=\sum_{j=1}^{\infty} \|x_j\|_p^p\,\|f_j\|_{L_1(I)}^p.\qedhere
\end{align*}
\end{proof}

We close this section by remarking that, in spite of Proposition~\ref{prop:vogt-lp}, condition \eqref{eq:CoordinateVogt} is not necessary for differentiability.

\begin{proposition}[see \cite{AlbiacAnsorena2012}*{Theorem 4.1}]\label{prop:diff-not-vogt}
Let $0<p \le 1$ and $X$ be $p$-Banach space. Assume that $F^\prime\in L_{1,V}([0,1],X)$ for every Lipschitz curve $F\in \LipD([0,1],X)$. Then $X$ is locally convex.
\end{proposition}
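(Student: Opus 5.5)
We argue by contraposition: assuming the derivative of every a.e.\@ differentiable Lipschitz curve is Vogt integrable, we show that $X$ is locally convex. The first step turns this qualitative hypothesis into a quantitative bound via the closed graph theorem. Since $X$ is $p$-Banach, $L_{1,V}([0,1],X)$ is a $p$-Banach space. By Proposition~\ref{prop:Dp-closed}, $\LipD_0([0,1],X)$ is a closed subspace of $\Lip_0([0,1],X)$, hence also a quasi-Banach space. Consider the linear map $\Dt\tp \LipD_0([0,1],X)\to L_{1,V}([0,1],X)$, $F\mapsto F^\prime$. To see that its graph is closed, I would use the pointwise estimate $\norm{f(t)}^p\le\sum_n\norm{x_n}^p\abs{f_n(t)}^p$ together with Jensen's inequality $\int\abs{f_n}^p\le \norm{f_n}_1^p$. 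These show that $L_{1,V}$ embeds continuously into $L_p([0,1],X)$. Convergence in $L_{1,V}$ therefore yields a.e.\@ convergence along a subsequence. Since $\Dt$ is a contraction into $L_\infty$, the two limits agree. The closed graph theorem then gives a constant $K$ with
\[
\norm{F^\prime}_{1,V}\le K\Lip(F),\qquad F\in\LipD_0([0,1],X).
\]

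The second step extracts metric information from this bound. For a polygonal (or countably piecewise linear) curve $F$, the derivative is a locally simple function. Its Vogt integral over any subinterval $J=[a,b]$ is the increment $F(b)-F(a)$; for finitely many pieces this is an honest finite sum, so no point separation is needed. Since multiplying each $f_n$ in a Vogt decomposition by $\chi_J$ does not increase $N(\bm x,\bm f)$, we have $\norm{F^\prime\chi_J}_{1,V}\le\norm{F^\prime}_{1,V}$. The aim is to find a Vogt-type lower estimate that, over a partition $(J_i)$ into many pieces, forces $\norm{F^\prime}_{1,V}$ to be large. Combined with the bound above, this would contradict the curves of Lemma~\ref{lem:small-scale-curve}. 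Those curves are polygonal, hence in $\LipD$. They have $\Lip(F)=1$ and $\norm{F(1)-F(0)}=1$, yet they are $\varepsilon$-Lipschitz at scale $r$, and they exist precisely when $X$ is not locally convex.

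The third step is the construction, and it is the main obstacle. The difficulty is that the Vogt gauge is only $p$-subadditive over disjoint pieces. The naive estimate $\norm{F^\prime}_{1,V}\le(\sum_i\lambda_i^p)^{1/p}$ for a polygonal curve with unit-size pieces is an upper bound, not a lower one. My first attempt would be to glue infinitely many rescaled and translated copies of the curves $G$ from Lemma~\ref{lem:small-scale-curve} on disjoint intervals $J_k\subset[0,1]$, with amplitudes chosen so that the glued curve has $\Lip\le 1$. Since each piece is polygonal, the result is a.e.\@ differentiable. I would then exploit the fact that a Vogt decomposition $\sum_n x_n f_n$ of $F^\prime$ can be truncated to finitely many terms with small error. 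The truncated part has values in a finite-dimensional, hence locally convex, subspace, where Lemma~\ref{lem:Anso} applies with a fixed constant. On the blocks $J_k$ with $k$ large, the non-locally-convex amplification $\norm{G(1)-G(0)}\gg\sum_i\lambda_i\norm{x_i}$ cannot be reproduced by this finite-rank approximation uniformly in $k$. That should contradict the uniform bound from the first step.

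I expect that making this finite-rank approximation argument uniform will require choosing the blocks so that the dimension-dependent convexity constants are beaten by the amplification factor $\kappa/\varepsilon$ from Lemma~\ref{lem:small-scale-curve}. That choice is where the real work lies; the closed graph reduction in the first step is routine.
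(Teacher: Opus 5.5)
Your Step~1 is sound. The bound $\int_0^1\norm{f(t)}^p\,dt\le\norm{f}_{1,V}^p$ identifies the two limits, and the closed graph theorem does give $\norm{F'}_{1,V}\le K\Lip(F)$ on $\LipD_0([0,1],X)$. The gap is that Steps~2--3 never produce a lower bound for $\norm{F'}_{1,V}$ that can exceed $\Lip(F)$, and the route you sketch offers no way to get one.

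\begin{itemize}
\item The only lower bound you write down is $\norm{F(b)-F(a)}\le\norm{F'\chi_{[a,b]}}_{1,V}$. It is itself at most $\Lip(F)(b-a)$, so it can never contradict $\norm{F'}_{1,V}\le K\Lip(F)$.
\item The curves of Lemma~\ref{lem:small-scale-curve} are the wrong test objects. In $\ell_p$ with $x_i=e_i$ and $\lambda_i=1/m$, the normalized curve has $F'=m^{1-1/p}e_i$ on the $i$-th interval of length $1/m$. Hence $\norm{F'}_{1,V}=1=\Lip(F)$: small pointwise speed against a unit increment is invisible to the Vogt gauge.
\item The finite-rank truncation has no mechanism for uniformity. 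The convexity constant of the span of the first $N$ atoms is dictated by the Vogt decomposition the hypothesis hands you, not by your choice of blocks.
\end{itemize}

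The missing idea is a lower bound that detects cancellation. Multiply each $f_n$ in a Vogt decomposition by some $g\in L_\infty$ and use $\norm{\int f}^p\le\sum_n\norm{x_n}^p\abs{\int f_n}^p$. This gives $\norm{\int_0^1 gf}\le\norm{g}_\infty\norm{f}_{1,V}$.

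Now take $x_1,\dots,x_m\in B_X$ and $\lambda_i>0$ with $\sum_i\lambda_i=1$, and set $t_i=\lambda_1+\dots+\lambda_i$. Let $F$ be the zigzag with $F(0)=0$, $F'=x_i$ on the left half of $[t_{i-1},t_i]$, and $F'=-x_i$ on the right half.
\begin{itemize}
\item Then $F(t_i)=0$ for all $i$, and $\norm{F(t)}$ is at most the distance from $t$ to $\{t_0,\dots,t_m\}$.
\item Inside each block $F$ moves at unit speed along one segment. Across blocks, apply the quasi-triangle inequality through a zero of $F$. Together these give $\Lip(F)\le\kappa$.
\item Taking $g=\pm1$ according to the sign of $F'$, we get $gF'=\sum_i x_i\chi_{[t_{i-1},t_i]}$, and therefore
\[
\norm{\sum_{i=1}^m\lambda_i x_i}=\norm{\int_0^1 gF'}\le\norm{F'}_{1,V}\le K\kappa .
\]
\end{itemize}
This bounds the convexity constant of $X$, so $X$ is locally convex.

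The closed graph theorem is not even needed. On each $[2^{-k},2^{1-k}]$, place a rescaled zigzag with $\norm{\sum_i\lambda_i^{(k)}x_i^{(k)}}\ge 4^k$. This yields a single $F\in\LipD([0,1],X)$ with $F'\notin L_{1,V}$.
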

\section{WRNP vs.\ RNP in quasi-Banach spaces and their envelopes}\label{sec:weakRNP}\noindent
For Banach spaces, Bochner integration closes the gap between weak and strong differentiation, and consequently the WRNP and the RNP coincide. In the nonlocally convex setting this connection may break down. We show first that the RNP of the Banach envelope does not imply the WRNP of the original quasi-Banach space.

\begin{theorem}\label{thm:RNPvsWRNP}
There is a quasi-Banach space without the WRNP whose Banach envelope has the RNP.
\end{theorem}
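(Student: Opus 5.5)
The idea is to find a $p$-Banach space $X$ with the point separation property, whose dual consists of coordinate-type functionals, and whose Banach envelope is $\ell_1$. We then build a Lipschitz curve into $X$ whose only possible weak derivative, namely the coordinatewise derivative, fails to lie in $X$ at every point of a set of positive measure. Theorem~\ref{thm:AA} shows that $X$ cannot have a boundedly complete basis. So we take $0<p<1$ and let $X=d_{p,\infty}$ be the closure of the finitely supported sequences in weak $\ell_p$, i.e. in $\ell_{p,\infty}$, with quasi-norm $\norm{x}=\sup_n n^{1/p}x_n^*$, where $(x_n^*)$ is the decreasing rearrangement of $(\abs{x_n})$. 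In $X$ the unit vectors $(e_n)_{n=1}^\infty$ form a basis that is not boundedly complete, since $\sum_n n^{-1/p}e_n$ has bounded partial sums but does not converge.

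\emph{Step 1: the envelope is $\ell_1$.} We first identify $X^*$. Every $x^*\in X^*$ is determined by $a_n=x^*(e_n)$, and we have $\norm{x^*}\ge\sup_n\abs{a_n}$ because $\norm{e_n}=1$. Conversely, for $\norm{x}\le 1$,
\[
\abs{\sum_n a_nx_n}\le \sup_n\abs{a_n}\sum_n x_n^*\le \sup_n\abs{a_n}\sum_n n^{-1/p}.
\]
Hence $X^*\simeq\ell_\infty$ isomorphically, via the coordinate functionals. The envelope norm $\sup\{\abs{x^*(x)}\tq \norm{x^*}\le 1\}$ is therefore equivalent to $\norm{x}_1$ on $X$. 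Since $c_{00}$ is dense in both spaces, this gives $\widehat{X}\simeq\ell_1$, which has the RNP. In particular $X$ has the point separation property.

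\emph{Step 2: the curve.} Let $(r_n)_{n=1}^\infty$ be the Rademacher functions on $I=[0,1]$, extended periodically or by zero outside if needed. Define
\[
F_n(t)=n^{-1/p}\int_0^t r_n(u)\,du, \qquad F(t)=\sum_{n=1}^\infty F_n(t)e_n .
\]
Two bounds on the coordinates drive everything:
\[
\abs{F_n(t)-F_n(s)}\le n^{-1/p}\abs{t-s}, \qquad \abs{F_n(t)}\le n^{-1/p}2^{-n}.
\]
The second bound places each $F(t)$ in $X$: its entries decay geometrically, so $F(t)$ is a norm limit of its partial sums, and $F(t)\in d_{p,\infty}$. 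The first bound shows $\norm{F(t)-F(s)}\le\abs{t-s}$, because the coordinate sequence of $F(t)-F(s)$ is dominated entrywise by $\abs{t-s}(n^{-1/p})_{n=1}^\infty$. So $F$ is $1$-Lipschitz.

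\emph{Step 3: failure of weak differentiability.} Suppose $F$ were weakly differentiable at $s$ with weak derivative $x\in X$. Testing against the coordinate functionals $e_n^*\in X^*$ forces
\[
e_n^*(x)=F_n'(s)=n^{-1/p}r_n(s)\quad\text{for every } n.
\]
This holds at every $s$ outside the countable set of dyadic points. Then $\abs{e_n^*(x)}=n^{-1/p}$ for all $n$, so $x$ lies at positive distance from $c_{00}$ in $\ell_{p,\infty}$: its $N$-th tail has quasi-norm bounded below independently of $N$. This contradicts $x\in d_{p,\infty}$. Hence $F$ is weakly differentiable nowhere outside a null set, and $X$ fails the WRNP.

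The main obstacle is Step 1. One must check carefully that the Banach envelope of $d_{p,\infty}$ is $\ell_1$, with no larger closure, in particular that $J_X(X)$ is dense in $\ell_1$. This follows from the density of $c_{00}$ together with the equivalence of the envelope norm with $\norm{\cdot}_1$, so I expect it to go through. Everything else in Steps 2 and 3 reduces to the elementary coordinate estimates above.
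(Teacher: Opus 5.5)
Your proof is correct. It uses the same space as the paper, $X=d_{p,\infty}$ with envelope $\ell_1$, but reaches the failure of the WRNP by a more hands-on route.

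The paper argues structurally. Lemma~\ref{lem:coBC} extracts, from the non-boundedly complete unconditional basis, a block basic sequence $(x_j)$ equivalent to the unit vector basis of $c_0$. Lemma~\ref{lem:ConverseAA} then shows that $t\mapsto\sum_j j^{-1}\phi(jt)\,x_j$, with $\phi=d_{\RR}(\cdot,2\ZZ)$, is Lipschitz, because it factors through the bounded operator $c_0\to X$. This curve is nowhere weakly differentiable: at a candidate point $s$ (necessarily irrational), the formal derivative $\sum_j\phi'(js)x_j$ has unimodular coefficients on a $c_0$-sequence and cannot converge.

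You instead use the lattice structure of $\ell_{p,\infty}$ directly. Monotonicity of the decreasing rearrangement gives the Lipschitz bound from the entrywise domination by $\abs{t-s}(n^{-1/p})_n$. The description of $d_{p,\infty}$ as the sequences with $n^{1/p}x^*_n\to0$ then excludes the formal derivative $(n^{-1/p}r_n(s))_n$.

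The two curves rest on the same sawtooth idea. Up to the indexing of the Rademacher functions, your coordinates are $n^{-1/p}2^{-n}\phi(2^n t)$. You place the weights of the bounded non-convergent series $\sum n^{-1/p}e_n$ coordinatewise with dyadic frequencies, rather than passing to blocks.

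Your Step~1 gives an explicit duality proof that the envelope is (isomorphic to) $\ell_1$, which the paper derives only tersely from the inclusions $\ell_p\subset X\subset\ell_1$. You also never need the local $p$-convexity result quoted in Lemma~\ref{lem:BCNBC}.

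The paper's route buys generality: Lemma~\ref{lem:ConverseAA} shows that every quasi-Banach space with an unconditional, non-boundedly complete basis fails the WRNP, a partial converse of Theorem~\ref{thm:AA}. Your argument is tailored to $d_{p,\infty}$, but it is shorter and fully self-contained.
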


Before proving Theorem 5.1, we establish two auxiliary lemmas.

If $\Et=(e_n)_{n=1}^\infty$ is a basis of a quasi-Banach space $X$, then the sequence $(J_X(e_n))_{n=1}^\infty$, where $J_X\tp X \to \widehat{X}$ is the envelope map, is a basis of the Banach envelope $\widehat{X}$ that we call the \emph{envelope basis} $\widehat{\Et}$ of $\Et$.

\begin{lemma}\label{lem:BCNBC}
Given $0<p<1$, there exists a $p$-Banach space with an unconditional basis that fails to be boundedly complete, whose envelope basis is a boundedly complete basis of $\widehat{X}$.
\end{lemma}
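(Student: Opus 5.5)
We need a $p$-Banach space $X$ with an unconditional basis $(e_n)$ that is not boundedly complete, while $(J_X(e_n))$ is boundedly complete in $\widehat{X}$. The natural candidate is a Lorentz-type or mixed-norm sequence space: the quasi-norm should make long flat blocks bounded, while convexification makes them unbounded. I propose $X=\ell_1(\ell_p^{m})$-type constructions fail (those are boundedly complete), so instead I take the space $X=\ell_{1,\infty}$-like weak space? That is not separable. A cleaner choice is an $\ell_p$-sum of $p$-convexified $c_0$-blocks:
\[
X=\Bigl(\bigoplus_{k=1}^\infty \ell_\infty^{N_k}\Bigr)_{\ell_p}, \qquad \norm{x}=\Bigl(\sum_k \norm{x^{(k)}}_\infty^p\Bigr)^{1/p},
\]
which is $p$-Banach with the unit vector basis unconditional. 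Here, though, bounded completeness still holds, since the blocks are finite-dimensional. So I will instead use the Tsirelson/James-free construction below.

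Concretely, I will take $X$ to be the completion of $c_{00}$ under
\[
\norm{x}=\sup_{n}\Bigl(\Bigl|\sum_{j\le n}\! \text{-free}\Bigr|\Bigr),
\]
that is, I will choose $X=c_0$ re-quasinormed so that it is $p$-normable and has envelope $\ell_1$. Precisely, let $X$ be the space of sequences with
\[
\norm{x}_X=\sup_{k}\Bigl(\sum_{j} \abs{x_{\sigma(j)}}^p\Bigr)^{1/p}
\]
restricted to dyadic blocks with weights: on block $B_k$ of length $2^k$, set $\norm{x}_X=\bigl(\sum_k (2^{-k/p}\,\sum_{n\in B_k}\!|x_n|^p)^{\,?}\bigr)$. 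The plan is to choose block weights $w_k$ and work with
\[
\norm{x}_X=\sup_k \, w_k\Bigl(\sum_{n\in B_k}\abs{x_n}^p\Bigr)^{1/p}\quad\text{(a $c_0$-sum of weighted $\ell_p^{|B_k|}$)},
\]
which is a $p$-norm with unconditional unit vector basis, not boundedly complete since $\sum_{n\in B_1\cup\dots\cup B_m} w_{k(n)}^{-1}|B_{k(n)}|^{-1/p} e_n$ has bounded norm but does not converge. The envelope is the $c_0$-sum of weighted $\ell_1^{|B_k|}$, which is again not boundedly complete, so this approach must be modified by replacing the outer $c_0$-sum with something whose convexification becomes $\ell_1$.

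The key observation is that the envelope norm is $\norm{x}_{\widehat X}=\inf\{\sum\norm{x_i}_X: x=\sum x_i\}$; for an unconditional basis this equals a lattice convex hull. I would therefore take an outer $c_0$-type sum of blocks, each of which is $\ell_p^{N}$ scaled by $N^{1-1/p}$, so that every block has envelope norm equivalent to $\ell_1^N$ and the flat vector has $X$-norm $1$ but envelope norm $N$. Then, across blocks, I would use the sum $\bigl(\bigoplus \ell_p\text{-blocks}\bigr)_{c_0}$ with normalized pieces, and verify that the envelope is the $\ell_1$-sum of $\ell_1^{N_k}$, i.e.\ $\ell_1$. The hardest step is exactly this envelope computation: showing that averaging across blocks yields $\ell_1$ norms globally rather than $c_0$. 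I would attempt it by exhibiting, for each finitely supported $x$, a decomposition $x=\sum_n x_ne_n$ with $\norm{e_n}_X$ comparable to $1$, so that $\norm{x}_{\widehat X}\le C\norm{x}_{\ell_1}$, and by bounding below using the functional $\sum\pm e_n^*$, which has norm at most $1$ on $X$ because $\ell_p$-blocks scaled properly lie in the $\ell_1$ ball. If this succeeds, $\widehat X=\ell_1$ with its boundedly complete unit vector basis, while $X$ fails bounded completeness, as required.
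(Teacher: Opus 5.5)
Your proposal never settles on a space that works, and the construction you finally commit to, an outer $c_0$-sum of rescaled $\ell_p^{N_k}$-blocks, provably fails. For any $X=\bigl(\bigoplus_k X_k\bigr)_{c_0}$, the dual is the $\ell_1$-sum of the $X_k^*$. Since $\norm{J_X x}_{\widehat X}=\sup\{\abs{x^*(x)}\tq \norm{x^*}_{X^*}\le 1\}$, this gives $\norm{J_X x}_{\widehat X}=\sup_k \norm{J_{X_k}x^{(k)}}_{\widehat{X_k}}$. So $\widehat X$ is the $c_0$-sum of the $\widehat{X_k}$, whatever the blocks are. Choosing in each block a $u_k$ with $\norm{J_{X_k}u_k}=1$ gives a block series $\sum_k u_k$ with bounded partial sums in $\widehat X$ that does not converge. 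Hence the envelope basis is never boundedly complete, and no averaging inside blocks can turn the outer $c_0$-norm into an $\ell_1$-norm.

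Your proposed lower bound fails for the same reason. With normalized unit vectors, $e_{n_1}+\dots+e_{n_K}$ (one index per block) has $X$-norm $1$ but pairs to $K$ with $\sum_n e_n^*$, so that functional is unbounded on $X$. There is also a directional error earlier: $J_X$ is a contraction, so no vector can have $X$-norm $1$ and envelope norm $N>1$. What the lemma actually forces is the reverse. If $\sum a_n e_n$ is bounded but not convergent in $X$ while the envelope basis is boundedly complete, then there are blocks with $X$-norms bounded below whose envelope norms tend to $0$.

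Your final paragraph does contain the right reduction. If $X$ has a normalized unconditional basis and the inclusion $X\subset \ell_1$ is bounded (i.e.\ the functionals $\sum_n \pm e_n^*$ are uniformly bounded on $X$), then $\widehat X=\ell_1$ and the envelope basis is the unit vector basis, which is boundedly complete. The missing ingredient is a concrete locally $p$-convex space with these properties that still fails bounded completeness.

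The paper takes the closure $X$ of $c_{00}$ in weak $\ell_p$, i.e.\ in $\ell_{p,\infty}$.
\begin{itemize}
\item Since $\sum_n n^{-1/p}<\infty$, one has $\norm{x}_1\le \bigl(\sum_n n^{-1/p}\bigr)\norm{x}_{p,\infty}$. Thus $\ell_p\subset X\subset \ell_1$ continuously, and $\widehat X=\ell_1$.
\item The series $\sum_n n^{-1/p}e_n$ has partial sums of weak-$\ell_p$ norm $1$. Its dyadic blocks have norm at least $2^{-1/p}$, so it does not converge and the basis is not boundedly complete.
\item $X$ is $p$-normable by Kalton's theorem. Alternatively, Hunt's local $q$-convexity for $q<s$ suffices if you use $\ell_{s,\infty}$ with $p<s<1$ instead.
\end{itemize}
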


\begin{proof}
Let $X$ be the separable part of the weak Lorentz space $\ell_{p,\infty}$. Since $\ell_p\subset X\subsetneq \ell_{p,\infty}\subset\ell_1$, the Banach envelope of $X$ is $\ell_1$ via the inclusion map, and the canonical basis of $X$ fails to be boundedly complete. While Hunt \cite{Hunt1966} proved that $X$ is locally $q$-convex for all $0<q<p$, Kalton \cite{Kalton1980b} proved that it is even locally $p$-convex.
\end{proof}

The $c_0$-criterion for bounded completeness of a basis (see, e.g., \cite{AlbiacKalton2016}*{Theorem 3.3.2}) applies to unconditional bases of quasi-Banach spaces, except that, in the absence of local convexity, the copies of $c_0$ we obtain may not be complemented. Since the Hahn--Banach theorem is unavailable, the proof of this result is not a straightforward generalisation of the Banach case. To avoid straying from our goal, we omit this proof. Instead, we include a proof of the precise part of the $c_0$-criterion we will need. The arguments are fairly standard.

\begin{lemma}\label{lem:coBC}
Given an unconditional basis $\Et=(e_n)_{n=1}^\infty$ of a quasi-Banach space $X$, the following are equivalent.
\begin{enumerate}[label=(\roman*),leftmargin=*,widest=iii]
\item\label{it:c0BC1} $\Et$ fails to be boundedly complete.
\item\label{it:c0BC2} There is a block basic sequence $(x_j)_{j=1}^\infty$ of $\Et$ which is equivalent to the canonical $c_0$-basis.
\end{enumerate}
\end{lemma}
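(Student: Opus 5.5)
Both implications follow the standard Banach-space argument. The only changes are that the triangle inequality is replaced by the modulus of concavity $\kappa$, and the Hahn--Banach theorem is never used. Let $K$ be the unconditional basis constant of $\Et$, so that $\norm{\sum_n \epsilon_n a_n e_n}_X\le K\norm{\sum_n a_ne_n}_X$ for all signs and, more generally, for all coefficients $\abs{\epsilon_n}\le 1$. The "more generally" part holds after renorming, or with a constant depending on $\kappa$, using the standard convexity argument for real scalars.

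\ref{it:c0BC2}$\Rightarrow$\ref{it:c0BC1}. Let $(x_j)_{j=1}^\infty$ be a block basic sequence equivalent to the $c_0$-basis. Write $x_j=\sum_{n\in A_j} a_n e_n$ with successive finite sets $A_j$, and let $a_n=0$ outside $\bigcup_j A_j$. Consider the partial sums $\sum_{n\le m} a_n e_n$. If $m\in A_k$, such a sum equals $\sum_{j<k}x_j+P(x_k)$, where $P$ is a coordinate projection. By unconditionality, $\norm{P(x_k)}_X\le K\norm{x_k}_X$. Hence
\[
\norm{\sum_{n\le m} a_n e_n}_X\le \kappa\enpar{\norm{\sum_{j<k}x_j}_X+K\sup_j\norm{x_j}_X},
\]
and this is bounded by the $c_0$-equivalence. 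On the other hand, the series $\sum_n a_ne_n$ cannot converge. Its partial sums along the block endpoints are $\sum_{j\le k}x_j$, and $\norm{x_j}_X$ is bounded below, so these partial sums do not form a Cauchy sequence. Therefore $\Et$ is not boundedly complete.

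\ref{it:c0BC1}$\Rightarrow$\ref{it:c0BC2}. Pick scalars $(a_n)_{n=1}^\infty$ with $M:=\sup_m\norm{\sum_{n\le m}a_ne_n}_X<\infty$ such that $\sum_n a_ne_n$ diverges. Then its partial sums are not Cauchy, so there are $\delta>0$ and integers $m_0<m_1<m_2<\cdots$ such that the blocks $x_j=\sum_{m_{j-1}<n\le m_j}a_ne_n$ satisfy $\norm{x_j}_X\ge\delta$. For the upper estimate, take any finite set $F\subset\NN$. The vector $\sum_{j\in F}x_j$ is a coordinate projection of a partial sum $\sum_{n\le m_{\max F}}a_ne_n$, so
\[
\norm{\sum_{j\in F}x_j}_X\le KM.
\]
Now let $(\lambda_j)$ be scalars with $\abs{\lambda_j}\le1$, finitely many nonzero. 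Unconditionality with bounded multipliers gives $\norm{\sum_j\lambda_jx_j}_X\le K'M$, where $K'$ is the suppression-unconditional constant adapted to multipliers in $[-1,1]$. For complex scalars we split into real and imaginary parts, which costs a factor of $\kappa$. For the lower estimate, unconditionality gives
\[
\norm{\sum_j\lambda_jx_j}_X\ge K^{-1}\norm{\lambda_kx_k}_X\ge K^{-1}\delta\abs{\lambda_k}
\]
for every $k$. Hence $(x_j)$ is equivalent to the $c_0$-basis, which proves \ref{it:c0BC2}.

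The step I expect to need care is the passage from sign-unconditionality to bounded multipliers in a quasi-Banach space. Here I would use the standard fact that for real multipliers $\lambda_j\in[0,1]$, a dyadic-free argument applies: the vector $\sum\lambda_jx_j$ lies in the convex hull of $\{\sum_{j\in F}x_j\}$. Since convex hulls are not controlled without local convexity, I would instead use the order structure. The lattice $X$ with an unconditional basis admits an equivalent quasi-norm that is a lattice quasi-norm, by the standard renorming $\sup_{\abs{\epsilon_n}\le1}\norm{\sum\epsilon_na_ne_n}_X$. This is the first thing I would try. It makes the upper estimate immediate, at the cost of a constant depending only on $\kappa$ and $K$.
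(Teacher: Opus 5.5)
Your strategy is the same as the paper's. In (i)$\Rightarrow$(ii) you cut blocks out of a divergent series $\sum_n a_ne_n$ with bounded partial sums, get the upper $c_0$-estimate from coordinate projections of partial sums, and get the lower estimate by projecting onto a single block. In (ii)$\Rightarrow$(i) you note that partial sums of $\sum_n a_ne_n$ are controlled by partial sums of $\sum_j x_j$.

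The gap is the step you flag yourself: the bound $\norm{\sum_j\lambda_jx_j}_X\le C M\sup_j\abs{\lambda_j}$ for arbitrary bounded scalars. The paper does not derive this from sign-unconditionality. It starts from the bounded-multiplier inequality $\norm{\sum_n b_ne_n^*(f)e_n}_X\le K\sup_n\abs{b_n}\norm{f}_X$, quoted from \cite{AABW2021}*{Theorem 2.10}. Neither of your justifications yields it. Writing $(\lambda_j)$ as a convex combination of sign or $0/1$ vectors gives no uniform constant without local convexity, as you note. The renorming $\sup_{\abs{\epsilon_n}\le 1}\norm{\sum_n\epsilon_na_ne_n}_X$ is circular: that supremum is finite and equivalent to $\norm{\cdot}_X$ exactly when bounded multipliers act uniformly boundedly, which is what you need to prove.

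The missing idea is a binary expansion of the multipliers combined with the Aoki--Rolewicz theorem. In your situation it applies directly to a bound you already have, namely $\norm{\sum_{j\in F}x_j}_X\le KM$ for every finite $F$.

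Given $\lambda_j\in[0,1]$, finitely many nonzero, write $\lambda_j=\sum_{k=1}^\infty 2^{-k}\beta_{j,k}$ with $\beta_{j,k}\in\{0,1\}$, and set $F_k=\enbrace{j\tq \beta_{j,k}=1}$. Everything lives in a finite-dimensional subspace, so
\[
\sum_j\lambda_jx_j=\sum_{k=1}^\infty 2^{-k}\sum_{j\in F_k}x_j .
\]
The exponential galbedness estimate \eqref{eq:ExpGalb} with $\alpha=1/2$ then gives $\norm{\sum_j\lambda_jx_j}_X\le C_{1/2}(X)\,KM$. Real multipliers in $[-1,1]$ (split into positive and negative parts) cost a further factor $2\kappa$, and complex ones (split into real and imaginary parts) another factor $2\kappa$. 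With this your upper estimate holds and the rest of your argument goes through. The same expansion, applied to coordinate projections of an arbitrary $f$, is how the lattice inequality the paper cites is obtained.

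One minor point. To get \emph{consecutive} blocks all of norm at least $\delta$ from the failure of the Cauchy condition, you must replace $\delta$ by $\delta/(2\kappa)$ using the quasi-triangle inequality. The paper sidesteps this by leaving gaps between its blocks $x_k=\sum_{n=m_{2k-1}}^{m_{2k}}a_ne_n$.
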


\begin{proof}
Let $(e_n^*)_{n=1}^\infty$ be the biorthogonal functionals of $\Et$. There is $K\in[1,\infty)$ such that
\[
\norm{ \sum_{n=1}^\infty b_n \, e_n^*(f) \, e_n } \le K \sup_{n\in\NN} \abs{b_n} \norm{f}
\]
for all $f\in X$ and all sequences $(b_k)_{k=1}^\infty \in c_{00}$ (see \cite{AABW2021}*{Theorem 2.10}).

Pick a sequence of scalars $(a_n)_{n=1}^\infty$. If $\sum_{n=1}^\infty a_n \, x_n$ does not converge, then, by the Cauchy criterion, there are $\varepsilon>0$ and an increasing sequence $(m_k)_{k=1}^\infty$ in $\NN$ such that the block basic sequence $(x_k)_{k=1}^\infty$ defined as
\[
x_k= \sum_{n=m_{2k-1}}^{m_{2k}} a_n \, e_n, \qquad k\in\NN,
\]
satisfies $\norm{x_k}\ge \varepsilon$ for all $k\in\NN$. Consequently,
\[
\norm{\sum_{k=1}^\infty b_k \, x_k} \ge \frac{\varepsilon}{K} \sup_k \abs{b_k}
\]
for all sequences $(b_k)_{k=1}^\infty\in c_{00}$. If, moreover,
\[
C:=\sup_m\norm{\sum_{n=1}^m a_n \, e_n}<\infty,
\]
then
\[
\norm{\sum_{k=1}^\infty b_k \, x_k} \le C K \sup_k \abs{b_k}.
\]
Thus, \ref{it:c0BC1} implies \ref{it:c0BC2}. Conversely given a block basic sequence $\Xt=(x_k)_{k=1}^\infty$ of $\Et$, there are an increasing sequence $(m_k)_{k=0}^\infty$ in $\ZZ$ with $m_0=0$ and $(a_n)_{n=1}^\infty$ in $\FF$ such that
\[
x_k=\sum_{n=1+m_{k-1}}^{m_k} a_n \, e_n.
\]
Set
\[
D=\sup_{j\in\NN} \norm{\sum_{k=1}^j x_k}.
\]
Since $\sum_{n=1}^{m_j} a_n\, e_n=\sum_{k=1}^j x_k$ for all $j\in\NN$,
\[
C:=\sup_{m\in\NN} \norm{\sum_{n=1}^m a_n \, e_n}\le K D.
\]
If $\Xt$ is equivalent to the canonical $c_0$-basis, then $\sum_{k=1}^\infty x_j$ does not converge, and $D<\infty$. Hence, $\sum_{n=1}^{\infty} a_n\, e_n$ does not converge, and $C<\infty$. Consequently, \ref{it:c0BC2} implies \ref{it:c0BC1}.
\end{proof}

The next lemma is a partial converse of Theorem~\ref{thm:AA}.

\begin{lemma}\label{lem:ConverseAA}
Let $X$ be a quasi-Banach space with an unconditional basis $\Et$. If $X$ has the WRNP, then $\Et$ is boundedly complete.
\end{lemma}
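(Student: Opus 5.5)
We argue by contraposition. Assume $\Et=(e_n)_{n=1}^\infty$ is unconditional but not boundedly complete. By Lemma~\ref{lem:coBC} there is a block basic sequence $(x_k)_{k=1}^\infty$ of $\Et$,
\[
x_k=\sum_{n=1+m_{k-1}}^{m_k} a_n\, e_n ,
\]
which is equivalent to the canonical $c_0$-basis. We then build a Lipschitz curve into the closed span of $(x_k)$ that fails to be weakly differentiable on a set of positive measure. This mimics Clarkson's classical example in $c_0$. Take, for instance,
\[
F(t)=\sum_{k=1}^\infty \frac{\sin(2^k t)}{2^k}\, x_k, \qquad t\in[0,2\pi].
\]
The series converges because the coefficients tend to zero and $(x_k)$ is a $c_0$-basis. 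Since $\abs{\sin(2^k t)-\sin(2^k s)}\le 2^k\abs{t-s}$, the $c_0$-equivalence gives $\norm{F(t)-F(s)}\le C\sup_k 2^{-k}\abs{\sin(2^kt)-\sin(2^ks)}\le C\abs{t-s}$, so $F$ is Lipschitz.

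Now suppose $F$ were weakly differentiable at some $s$ with weak derivative $f(s)\in X$. Testing against the coordinate functionals $e_n^*\in X^*$, which exist because we have a basis, yields $e_n^*(f(s))=a_n\cos(2^k s)$ for $m_{k-1}<n\le m_k$. Since the basis is unconditional, block coefficients are recovered continuously by the block projections. Hence $f(s)$ has $k$th block equal to $\cos(2^k s)\,x_k$, and for convergence of the expansion of $f(s)$ we need $\norm{\cos(2^ks)x_k}\to0$. Because $\inf_k\norm{x_k}>0$ by the $c_0$-equivalence, this forces $\cos(2^ks)\to0$.

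The final step is to show that this condition fails almost everywhere. The set of $s$ with $\cos(2^k s)\to 0$ is null: by equidistribution of $2^k s$ modulo $2\pi$ for a.e.\ $s$ (Weyl/ergodic theorem for the doubling map), $\abs{\cos(2^ks)}>1/2$ infinitely often for a.e.\ $s$. An elementary alternative is to note that the sets $\{s:\abs{\cos(2^ks)}<1/2\}$ have measure $2/3$ of the interval and are asymptotically independent. Either way, $F$ is weakly differentiable almost nowhere, so $X$ fails the WRNP. This contradiction proves that $\Et$ must be boundedly complete.

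The main obstacle is the block-recovery step: we must pass from coordinatewise information about $f(s)$ to a contradiction in norm. This uses only that $f(s)$ belongs to $X$ together with the uniform boundedness of the (unconditional) block projections. No Hahn--Banach argument on $\operatorname{span}(x_k)$ is needed, since the coordinate functionals of the basis are globally defined.
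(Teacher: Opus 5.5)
Your proof is correct and follows essentially the same route as the paper: a block sequence equivalent to the $c_0$-basis from Lemma~\ref{lem:coBC}, a Clarkson-type curve along it, and the coordinate functionals $e_n^*$ to identify the $k$th block of any weak derivative as $\cos(2^k s)\,x_k$, which must then tend to zero in norm. The only difference is the scalar profile: the paper uses $\frac{1}{j}\,d_{\RR}(jt,2\ZZ)$, whose derivative has modulus $1$ wherever it exists, so it gets nowhere weak differentiability without any equidistribution argument. Your curve is in fact also nowhere weakly differentiable, since $\abs{\cos 2\theta}=\abs{2\cos^2\theta-1}>1/2$ whenever $\abs{\cos\theta}<1/2$. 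Incidentally, the set $\{s\tq\abs{\cos(2^ks)}<1/2\}$ has relative measure $1/3$, not $2/3$, and consecutive such sets are disjoint.
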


\begin{proof}
Suppose that $\Et$ is not boundedly complete.By Lemma~\ref{lem:coBC}, there is a block basic sequence $(x_j)_{j=1}^\infty$ of $\Et$ equivalent to the canonical $c_0$-basis. This implies that a block series $\sum_{j=1}^\infty a_j \, x_j$ converges if and only if $\lim_j a_j=0$. Moreover, the operator
\[
T\tp c_0 \to X, \qquad (a_j)_{j=1}^\infty \mapsto \sum_{j=1}^\infty a_j \, x_j
\]
is linear and bounded. Let
\[
\phi(t)=d(t,2\ZZ),\qquad t\in\RR,
\]
as in \eqref{eq:distance}, and define the curve in $X$
\[
F(t)=\sum_{j=1}^\infty \frac{1}{j}\phi(jt)x_j, \qquad t\in\RR.
\]
Since $\phi$ is bounded, the coefficient sequence $(j^{-1}\phi(jt))_{j=1}^\infty$ belongs to $c_0$, so $F$ is well defined. Moreover, since $\phi$ is $1$-Lipschitz,
\[
\|F(t)-F(s)\|_X
\leq
\|T\|
\sup_{j\in\NN}
\frac{1}{j}|\phi(jt)-\phi(js)|
\leq
\norm{T} \abs{t-s}
\]
for all $s$, $t\in \RR$. Hence $F$ is Lipschitz.

Let $(e_n^*)_{n=1}^\infty$ be the biorthogonal functionals of $\Et$, and choose an increasing sequence $(n_j)_{j=0}^\infty$ of nonnegative integers, with $n_0=0$, such that
\[
\supp(x_j)\subseteq(n_{j-1},n_j], \qquad j\in\NN.
\]
Suppose towards a contradiction that $F$ is weakly differentiable at some point $s\in\RR$, and let $x\in X$ be its weak derivative.

Fix $j\in\NN$ and choose $n\in(n_{j-1},n_j]$ such that $e_n^*(x_j)\neq 0$. Since
$e_n^*(x_k)=0$ for $k\neq j$,
\[
e_n^*(F(t))= \frac{e_n^*(x_j)}{j}\phi(jt).
\]
The scalar function $e_n^*\circ F$ is differentiable at $s$. Therefore $\phi$ is differentiable at $js$. Since the set of
differentiability points of $\phi$ is $\RR\setminus\ZZ$, we have $js\notin\ZZ$ for every $j\in\NN$, and consequently $s\notin\QQ$.

Furthermore, for every $j\in\NN$ and $n\in(n_{j-1},n_j]$,
\[
e_n^*(x)=(e_n^*\circ F)'(s) =e_n^*(x_j)\phi'(js).
\]
Hence the Schauder expansion of $x$ is
\[
x= \sum_{j=1}^\infty \sum_{n=n_{j-1}+1}^{n_j}e_n^*(x)e_n =
\sum_{j=1}^\infty \phi'(js) \sum_{n=n_{j-1}+1}^{n_j}e_n^*(x_j)e_n
= \sum_{j=1}^\infty \phi'(js)\, x_j.
\]
But $|\phi'(js)|=1$ for all $j\in\NN,$ so the coefficients $(\phi'(js))_{j=1}^\infty$ do not tend to zero. This contradiction shows that $F$ is nowhere weakly differentiable. Hence, $X$ fails the WRNP.
\end{proof}

\begin{proof}[Proof of Theorem~\ref{thm:RNPvsWRNP}]
Choose the quasi-Banach space $X$ with a basis $\Et$ provided by Lemma~\ref{lem:BCNBC}. Since the envelope basis is boundedly complete, $\widehat{X}$ has the RNP. However, $X$ fails to have the WRNP by Lemma~\ref{lem:ConverseAA}.
\end{proof}

We do not know whether the reverse phenomenon of Theorem~\ref{thm:RNPvsWRNP} can occur.

\begin{question}\label{qt:WRNPtoRNP}
Let $X$ be a quasi-Banach space with the WRNP. Does the Banach envelope $\widehat{X}$ have the RNP?
\end{question}

For the class of quasi-Banach spaces covered by Theorem~\ref{thm:AA},
Question~\ref{qt:WRNPtoRNP} has an affirmative answer.

\begin{theorem}\label{thm:bounded-completeness-envelope}
Let $X$ be a quasi-Banach space with a boundedly complete basis $\Et=(e_n)_{n=1}^\infty$. Then the envelope basis $\widehat{\Et}$ is a boundedly complete basis of the Banach envelope $\widehat{X}$.
In particular, $\widehat{X}$ has the RNP.
\end{theorem}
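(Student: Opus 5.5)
We prove the first assertion directly. The plan is to realize every element with bounded envelope partial sums as $J_X(F(1)-F(0))$ for a Lipschitz curve $F\tp[0,1]\to X$. The curve is obtained as a coordinatewise limit of polygonal curves, and bounded completeness of $\Et$ in $X$ supplies the limit.

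\emph{Preliminaries.} Let $K$ be the basis constant of $\Et$ and $S_m$ the partial-sum projections. Since $J_X\circ S_m=\widehat{S}_m\circ J_X$, where $\widehat{S}_m$ is the partial-sum projection of $\widehat{\Et}$, the coordinate functionals $e_n^*$ factor through $J_X$ as the biorthogonal functionals of $\widehat{\Et}$. We also need the following \emph{lower semicontinuity lemma}. If $(x_k)$ is bounded in $X$ and $\lim_k e_n^*(x_k)=a_n$ for every $n$, then $\sum_n a_n e_n$ converges in $X$ and its norm is $\le C\sup_k\norm{x_k}$. Indeed, $\sum_{n\le N}a_ne_n=\lim_k S_N(x_k)$ has norm $\le K\sup_k\norm{x_k}$ uniformly in $N$, so bounded completeness applies. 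We also use that $\norm{\cdot}_X$ may be taken continuous.

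\emph{Step 1: polygonal curves.} Suppose $\sup_m\norm{y_m}_{\widehat X}\le 1$, where $y_m=\sum_{n\le m}a_n J_X(e_n)$. The unit ball of $\widehat X$ is the closure of $\co J_X(B_X)$, so up to an arbitrarily small error (absorbed by a further approximation) we can write $y_m=\sum_i\lambda_{i,m}J_X(x_{i,m})$ with $x_{i,m}\in 2B_X$ and $\lambda_{i,m}\ge0$, $\sum_i\lambda_{i,m}=1$. Applying $\widehat{S}_m$, we may replace $x_{i,m}$ by $S_m x_{i,m}\in 2KB_X$. Let $G_m\tp[0,1]\to X$ be the polygonal curve with $G_m(0)=0$ whose successive segments have velocity $S_mx_{i,m}$ on intervals of length $\lambda_{i,m}$, exactly as in the proof of Lemma~\ref{lem:small-scale-curve}. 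Because each $G_m$ is built from finitely many vectors, its $\Lip$ is finite. But a uniform bound is the delicate point: in a nonlocally convex $X$, $\Lip(G_m)$ need not be bounded by $2K$.

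\emph{Step 2: compactness and the conclusion.} This is where I expect the main obstacle. The fix is to avoid needing a uniform bound on $\Lip(G_m)$ at all. We only need a bound on the coordinates, $\abs{e_n^*(G_m(t))}\le 2K\norm{e_n^*}$, which is uniform in $m$ and $t$. We also need a bound on $\sup_m\norm{G_m(1)}_X$ to apply the lemma at the point $t=1$. The latter still fails in general, so I would use only $t=1$ and replace $G_m(1)$ by the single element $z_m=\sum_i\lambda_{i,m}S_mx_{i,m}$. The remaining difficulty is then to bound $\norm{z_m}_X$ uniformly.

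If this direct bound fails, I would fall back on Theorem~\ref{thm:AA}. The glued curve $F$ (concatenating rescaled $G_m$ on dyadic intervals, with coordinates converging by a diagonal Arzel\`a--Ascoli argument on each coordinate) is controlled in the $\widehat X$ norm. In $\widehat X$, $J_X\circ F$ is differentiable a.e.\ with derivative $J_X f$, where $f(s)=\sum_n F_n'(s)e_n\in X$. By the Banach fundamental theorem of calculus, $J_X(F(b)-F(a))=\int_a^bJ_Xf$ as a Bochner integral, and its coordinates recover the $a_n$.

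In either route, once we obtain $w\in\widehat X$ with $\widehat{e}_n^*(w)=a_n$ for all $n$, the partial sums $\widehat{S}_m w$ converge to $w$ because $\widehat{\Et}$ is a basis. Since $\widehat{S}_m w=y_m$, the series $\sum_n a_nJ_X(e_n)$ converges, which proves bounded completeness. Finally, a Banach space with a boundedly complete basis is a separable dual space (it is the dual of the closed span of the biorthogonal functionals), and separable duals have the RNP. Hence $\widehat X$ has the RNP.
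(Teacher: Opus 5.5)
There is a genuine gap. Neither of your two routes produces the element $w$, and the step you flag as the main obstacle is where the entire content of the theorem lies.

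\textbf{First route.} The element $z_m=\sum_i\lambda_{i,m}S_mx_{i,m}$ is literally $G_m(1)$, so passing to it changes nothing. Its coordinates tend to $a_n$ as the approximation errors tend to $0$. By your own lower semicontinuity lemma, a uniform bound on $\norm{z_m}_X$ would therefore force $\sum_n a_ne_n$ to converge in $X$. So this route can only succeed in the trivial case. It fails, for example, for $X=\ell_p$, $\widehat X=\ell_1$ and $(a_n)\in\ell_1\setminus\ell_p$.

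\textbf{Fallback route.} This does not repair the problem. Theorem~\ref{thm:AA} needs a curve that is Lipschitz into $X$, which you have just observed you do not have; control in $\widehat X$ is not a hypothesis there. More fundamentally, the assertion $f(s)=\sum_n F_n'(s)e_n\in X$ is circular. Your approximating derivatives are step functions with values in the nonconvex set $2KB_X$. Coordinatewise (weak$^*$) limits of such functions take values only in the closed convex hull of $\Ft[\Et](2KB_X)$ in $\FF^\NN$. Proving that every sequence in that hull is the coefficient sequence of a vector of $\widehat X$ is precisely the theorem.

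\textbf{The missing idea.} The limiting object should be a measure on the nonconvex set, not a curve or a function into $X$.
\begin{enumerate}[label=(\alph*)]
\item After passing to a $p$-norm and a monotone renorming, bounded completeness of $\Et$ makes $B(\Et)=\Ft[\Et](B_X)$ compact and metrizable in the coordinate topology.
\item Every $\alpha=(a_n)_{n=1}^\infty\in\cocl(B(\Et))$ is then the barycenter of a regular probability measure $\mu$ on $B(\Et)$. The paper obtains $\mu$ from Rudin's Theorem~3.28. In your notation you could instead take a weak$^*$ cluster point of the discrete measures $\sum_i\lambda_{i,m}\delta_{\Ft[\Et](S_mx_{i,m})}$ on the compact set $\Ft[\Et](2KB_X)$.
\item Set $w=\int J_X\circ\St[\Et]\,d\mu$, a Bochner integral in $\widehat X$ of a bounded, strongly measurable map. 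Then $\widehat e^{\,*}_n(w)=\int\pi_n\,d\mu=a_n$ for all $n$.
\end{enumerate}
This is the paper's argument. The paper phrases it as the equality $B(\widehat{\Et})=\cocl(B(\Et))$, which shows that $B(\widehat\Et)$ is coordinatewise closed.

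Your concluding steps are correct: the convergence of $\widehat S_mw$ to $w$, and the RNP of a Banach space with a boundedly complete basis.
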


Before proving Theorem~\ref{thm:bounded-completeness-envelope}, we introduce some notation.

Given a basis $\Et=(e_n)_{n=1}^\infty$ of a quasi-Banach space $X$ with biorthogonal functionals $(e_n^*)_{n=1}^\infty$, we denote by
\[
\Ft[\Et] \tp X\to \FF^\NN, \quad f\mapsto (e_n^*(f))_{n=1}^\infty
\]
its \emph{coefficient transform}. Note that $\Ft[\Et]$ is one-to-one. We denote by
\[
\St[\Et]\tp \Ft[\Et](X) \to X, \quad (a_n)_{n=1}^\infty \mapsto \sum_{n=1}^\infty a_n \, e_n
\]
its inverse map.

Let $\omega$ denote the topology of coordinatewise convergence on $\FF^\NN$. If the quasi-norm of $X$ is continuous and $\Et$ is monotone, then $\Et$ is boundedly complete if and only if
\[
B(\Et):=\Ft[\Et](B_X)
\]
is $\omega$-closed. Moreover, $B(\Et)$ is contained in the product space
\[
\prod_{n=1}^\infty \enbrace{\lambda\in\FF \tq |\lambda|\leq \norm{e_n^*}_{X^*}},
\]
which is compact by Tychonoff's theorem. Hence, in case it is closed, $B(\Et)$ is compact in the coordinate topology. Since $\FF^\NN$ with the product topology is metrizable, $B(\Et)$ is in fact compact metrizable.

The notation $\co(V)$ stands for the convex hull of a subset $V$ of a topological vector space, and $\cocl(V)$ for the closed convex hull of $V$.

Given $n\in\NN$, we denote by $\pi_n\tp\FF^\NN\to \FF$ be $n$th coordinate functional associated with the unit vector system of $\FF^\NN$.

\begin{proof}[Proof of Theorem~\ref{thm:bounded-completeness-envelope}]
Throughout the proof, the topology on $X$ will be the one induced by its quasi-norm, the topology on $\widehat{X}$ will be the one induced by its norm, and the topology on $\FF^\NN$ will be the one associated with the topology of pointwise convergence. Passing to an equivalent $p$-norm for some $0<p\le 1$ and then replacing it with the quasi-norm
\[
x \mapsto \sup_{m\in\NN}\|S_mx\|_X, \quad x\in X,
\]
where $S_m$ is the $m$th partial-sum projection associated with $\Et$, we may assume that $X$ is a $p$-Banach space and that $\Et$ is monotone. Since $S_m$, $m\in\NN$, extends to a contraction on $\widehat{X}$, $\widehat{\Et}$ is also monotone. Thus, $B(\Et)$ is compact, and to show that $\widehat{\Et}$ is boundedly complete we must prove that $B(\widehat{\Et})$ is closed. To that end, it suffices to see that
\[
C= \cocl(B(\Et))=B(\widehat{\Et}).
\]
Pick $\alpha=(a_n)_{n=1}^\infty\in C$. There is a regular probability measure on $B(\Et)$ such that
\[
f(\alpha)=\int_{B(\Et)} f\, d\mu
\]
for every continuous linear map $f\tp\FF^\NN\to\FF$. Indeed, $C$ is compact since $\FF^\NN$ is a Fr\'echet space (see \cite{Rudin1991}*{Theorem 3.20}), so we may apply \cite{Rudin1991}*{Theorem 3.28}. In particular,
\begin{equation}\label{tag5.2}
a_n=\int_{B(\Et)}\pi_n\,d\mu, \qquad n\in\NN.
\end{equation}
The function $ \Phi:=J_X\circ\St[\Et]$ maps $B(\Et)$ into $B_{\widehat{X}}$. Further,
\[
\Phi(\beta)=\lim_{m\to\infty} \sum_{n=1}^m \pi_n(\beta) \, J_X(e_n), \quad \beta\in B(\Et).
\]
Thus $\Phi$ is strongly measurable and therefore $\Phi$ is Bochner integrable relative to $\mu$. Set
\begin{equation}\label{eq:newtag}
x=\int_{B(\Et)}\Phi\,d\mu\in B_{\widehat X}.
\end{equation}
Let $(\widehat{e}^{\, *}_n)_{n=1}^\infty$ be the sequence of biorthogonal functionals of $\widehat{\Et}$. We have
\begin{equation}\label{eq:newtag2}
\widehat{e}^{\, *}_n \circ J_X =e_n^*
\end{equation}
and
\[
e_n^*\circ \St[\Et]=\pi_n
\]
for all $n\in\NN$. Therefore, by \eqref{tag5.2} and \eqref{eq:newtag},
\[
\widehat{e}^{\, *}_n(x)
= \int_{B(\Et)} \widehat{e}^{\, *}_n \circ J_X\circ\St[\Et]\,d\mu
=\int_{B(\Et)}\pi_n\,d\mu
= a_n, \qquad n\in\NN.
\]
Hence $\alpha\in B(\widehat{\Et})$. This proves that $C\subset B(\widehat \Et)$.

Conversely, the standard realization of the Banach envelope gives
\[
B_{\widehat X} = \cocl\enpar{J_X(B_X)}.
\]
By \eqref{eq:newtag2}, $\Ft[\widehat{\Et}]\circ J_X=\Ft[\Et]$. Since the coefficient transform $\Ft[\widehat{\Et}]$ is continuous,
\begin{multline*}
B(\widehat{\Et})
=\Ft[\widehat{\Et}] \enpar{\cocl(J_X(B_X))}\\
\subset \cocl\enpar{\Ft[\widehat{\Et}](J_X(B_X))}
=\cocl \enpar{ \Ft[\Et](B_X)}=C.
\end{multline*}

This completes the proof since every Banach space with a boundedly complete basis has the RNP.
\end{proof}
\section{Decoupled-coordinate constructions of differentiable Lipschitz curves in \texorpdfstring{$\ell_{p}$}{} for \texorpdfstring{$0<p<1$}{}.}\label{sec:decoupled}\noindent
Theorem~\ref{thm:RNP-characterization} shows that, for $0<p<1$, there exist Lipschitz curves with values in $\ell_p$ which fail to be differentiable at every point. It is therefore natural to ask how such pathological curves can be constructed and, conversely, which additional structural features of a Lipschitz curve force differentiability almost everywhere.

The results recalled in Section~\ref{sec:new} suggest that coordinate structure can play a decisive role in this question. Every Lipschitz mapping
\[
F=\sum_{n=1}^{\infty}F_n\, e_n
\tp I\to\ell_p
\]
has a well-defined formal derivative
\[
f(t)=\sum_{n=1}^{\infty}F_n'(t)e_n
\]
for almost every $t$, and suitable integrability conditions on the coordinate derivatives imply that this formal derivative is in fact the derivative of $F$ almost everywhere. In particular, \cite{AlbiacAnso2016}*{Corollary~4.6} shows that this happens whenever the coordinate functions $F_n$ are monotone. Thus the existence of nowhere differentiable Lipschitz curves in $\ell_p$ does not preclude rather large and natural classes of mappings from retaining the classical almost-everywhere differentiability behavior.

In this section we investigate this phenomenon for a basic class of decoupled-coordinate constructions and exhibit that this mechanism produces almost everywhere differentiable curves in $\ell_{p}$.
We emphasize that our purpose is not to characterize the class $\LipD_0(I,\ell_p)$ of Lipschitz curves $F\tp I\to\ell_p$ which are differentiable almost everywhere. The results obtained above should rather be viewed as identifying a natural and fairly flexible class of curves for which differentiability can be recovered from the particular structure of their coordinates. Indeed, the preceding sections already exhibit rather different mechanisms leading to almost-everywhere differentiability. Vogt integrability of the weak derivative provides one such mechanism, but Proposition~\ref{prop:diff-not-vogt} shows that it is far from being necessary. On the other hand, Proposition~\ref{prop:Dp-closed} shows that $\LipD_0(I,\ell_p)$ is closed in the Lipschitz quasi-norm, and Proposition~\ref{p:remark33-diff} illustrates that even within the decoupled-coordinate setting the behavior may change substantially once the non-flatness assumption in Theorem~\ref{thm:decoupled-ae-diff} is removed. Thus almost everywhere differentiability in $\ell_p$ appears to arise from several genuinely different sources. In view of this variety, there seems to be little reason to expect a simple coordinate or summability criterion characterizing the whole class $\LipD(I,\ell_p)$. A more intrinsic description of this class, if one is available, would require a substantially broader perspective than the one pursued here.

\begin{theorem}\label{thm:decoupled-ae-diff}
Let $0<p<1$, $(a_n)_{n=1}^\infty$ and $(b_n)_{n=1}^\infty$ be sequences in $(0,\infty)$, and let $\phi\tp\RR\to\RR$ be Lipschitz with $\phi(0)=0$ and $\phi'(0)\not=0$. For $t\in \RR$ consider the formal series
\[
F(t) =\sum_{n=1}^\infty a_n\,\phi(b_nt)\,e_n.
\]
Suppose that $\lim_n b_n=\infty$. If $F$ is Lipschitz on $\RR$, then it is differentiable almost everywhere on $\RR$. Moreover, $F$ is Lipschitz on $\RR$ if and only if
\begin{equation*}
\sum_{n=1}^\infty (a_nb_n)^p<\infty.
\end{equation*}
\end{theorem}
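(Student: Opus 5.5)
The plan is to prove the Lipschitz characterization first, and then get almost-everywhere differentiability from it by a dominated-convergence argument in $\ell_p$. The point is that the summability of $(a_nb_n)^p$ gives a single summable majorant for every coordinate of every difference quotient. It also makes the machinery of Theorem~\ref{thm:AA} and Corollary~\ref{cor:SufCondStrDiff} unnecessary.

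\emph{Sufficiency of the summability condition.} Assume $\sum_n (a_nb_n)^p<\infty$ and put $L=\Lip(\phi)$. Since $\phi(0)=0$ we have $|a_n\phi(b_nt)|\le L a_nb_n|t|$, so the series defining $F(t)$ converges in $\ell_p$ for every $t$. For $s,t\in\RR$,
\[
\norm{F(t)-F(s)}_p^p=\sum_{n=1}^\infty a_n^p\abs{\phi(b_nt)-\phi(b_ns)}^p\le L^p\sum_{n=1}^\infty (a_nb_n)^p\,\abs{t-s}^p .
\]
Hence $F$ is Lipschitz with $\Lip(F)\le L\bigl(\sum_n(a_nb_n)^p\bigr)^{1/p}$.

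\emph{Necessity of the summability condition.} I expect this to be the only step needing some care. Assume $F$ is Lipschitz (in particular $F(t)\in\ell_p$ for all $t$) with constant $K$. Set $c=|\phi'(0)|/2>0$. Since $\phi(0)=0$ and $\phi'(0)\neq0$, there is $\delta>0$ with $|\phi(u)|\ge c|u|$ for $|u|\le\delta$. Fix $t>0$ and let $N(t)=\{n\in\NN \tq b_n\le \delta/t\}$. Then
\[
K^pt^p\ge \norm{F(t)-F(0)}_p^p\ge\sum_{n\in N(t)} a_n^p\abs{\phi(b_nt)}^p\ge c^p t^p\sum_{n\in N(t)}(a_nb_n)^p .
\]
Dividing by $t^p$ gives $\sum_{n\in N(t)}(a_nb_n)^p\le (K/c)^p$. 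As $t\to0^+$ the sets $N(t)$ increase to all of $\NN$, so monotone convergence yields $\sum_n(a_nb_n)^p\le (K/c)^p<\infty$.

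\emph{Differentiability almost everywhere.} Now assume $F$ is Lipschitz, so by the previous step $\sum_n(a_nb_n)^p<\infty$.
\begin{enumerate}
\item Let $D\subset\RR$ be the full-measure set where $\phi$ is differentiable (Rademacher). For $b_n>0$ the set $b_n^{-1}(\RR\setminus D)$ is null, so $E=\bigcap_n b_n^{-1}D$ has full measure.
\item Fix $s\in E$ and put $f(s)=\sum_n a_nb_n\phi'(b_ns)e_n$. This lies in $\ell_p$ because $|\phi'|\le L$.
\item For $t\neq s$, the $n$th coordinate of $\frac{F(t)-F(s)}{t-s}-f(s)$ is
\[
d_n(t)=a_n\Bigl(\frac{\phi(b_nt)-\phi(b_ns)}{t-s}-b_n\phi'(b_ns)\Bigr).
\]
Each $d_n(t)\to0$ as $t\to s$, since $b_ns\in D$.
\item The bound $|d_n(t)|^p\le (2La_nb_n)^p$ holds uniformly in $t$, and the right side is summable.
\item By dominated convergence for series, $\norm{\frac{F(t)-F(s)}{t-s}-f(s)}_p^p=\sum_n|d_n(t)|^p\to0$. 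Hence $F$ is differentiable at every $s\in E$, with $F'(s)=f(s)$.
\end{enumerate}

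So the argument reduces entirely to the necessity step, which uses $\phi'(0)\neq0$ to read off the size of each coordinate at small scales. The hypothesis $\lim_n b_n=\infty$ does not appear to be needed in this argument. I would keep it as stated, but check that nothing is lost if it is dropped.
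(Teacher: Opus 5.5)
Your proposal is correct and follows essentially the paper's own proof: necessity by bounding $\|F(h)\|_p^p$ from below on the coordinates with $b_n|h|<\delta$ (the paper fixes $N$ and takes $h$ small, where you fix $t$ and take the set $N(t)$), sufficiency by the direct $p$-norm estimate, and differentiability by dominated convergence for series on the full-measure set where every $\phi'(b_nt)$ exists. Your observation that $\lim_n b_n=\infty$ is never used is also accurate for the paper's argument, which likewise makes no use of that hypothesis.
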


\begin{proof}
There exist constants $c>0$ and $\delta>0$ such that $|\phi(u)|\ge c|u|$ whenever $|u|<\delta$. Fix $N\in\NN$ and choose $h\neq 0$ such that
\[
|h|<\frac{\delta}{\max_{1\leq n\leq N}b_n}.
\]
Then $b_n|h|<\delta$ for every $1\le n\le N$, and hence
\[
|\phi(b_nh)|\ge c\,b_n|h|,
\qquad 1\le n\le N.
\]
Consequently,
\[
\|F(h)\|_p^p
\ge \sum_{n=1}^N |a_n|^p\,|\phi(b_nh)|^p
\ge c^p |h|^p \sum_{n=1}^N (a_nb_n)^p.
\]
We have $F(0)=0$. Hence, if $F$ is Lipschitz,
\[
\|F(h)\|_p\le \Lip(F) \abs{h}.
\]
Therefore,
\[
\sum_{n=1}^N (a_nb_n)^p \le \frac{\Lip^p(f)}{c^p}.
\]
Since this estimate is valid for every $N\in\NN$, \eqref{thm:decoupled-ae-diff} holds.

Conversely, for $s$, $t\in\RR$ we have
\begin{align*}
\Vert F(t)-F(s)\Vert_{p}^{p}
&=\sum_{n=1}^{\infty}|a_{n}|^{p}|\phi(b_{n}t)-\phi(b_{n} s)|^{p}\\
&\le |\Lip(\phi)|^{p}|t-s|^{p}\sum_{n=1}^{\infty}|a_{n}b_{n}|^{p}.
\end{align*}
So, if \eqref{thm:decoupled-ae-diff} holds, then $F$ is Lipschitz.

Assume from now on that \eqref{thm:decoupled-ae-diff} holds. Since $\phi$ is Lipschitz on $\RR$, it is differentiable almost everywhere on $\RR$. Let $E\subset \RR$ be the set of points $t\in \RR$ such that $\phi'(b_n t)$ exists for every $n\in\NN$. Since each set $\{t\in I\tq \phi'(b_n t)\ \text{exists}\}$ has full measure in $I$, the countable intersection defining $E$ also has full measure in $\RR$.

Define the candidate derivative
\[
f(t):=\sum_{n=1}^\infty a_n b_n \phi'(b_n t)\,e_n.
\]
Since $\phi'$ is bounded, $f(t)\in \ell_p$ a.e.\@ $t\in \RR$.

Fix $t\in E$. For each $n$, define
\[
G_n(h):=a_n \left( \frac{\phi(b_n(t+h))-\phi(b_n t)}{h} -b_n\phi'(b_n t)\right), \qquad h\neq 0,
\]
so that
\[
\frac{F(t+h)-F(t)}{h}-f(t)
=\sum_{n=1}^\infty G_n(h) e_n,
\]
and therefore
\[
\left\|\frac{F(t+h)-F(t)}{h}-f(t)\right\|_p^p
=\sum_{n=1}^\infty \big|G_n(h)\big|^p.
\]
For each fixed $n$ we have $\lim_{h\to 0} G_n(h)=0$.
Since $|\phi'(b_n\, t)|\le \Lip(\phi)$,
\[
|G_n(h)|
\le \Lip(\phi) a_n b_n +|\phi'(b_nt)| a_n b_n \le 2\, \Lip(\phi) a_n b_n
\]
for every $h\neq 0$. The right-hand side is $p$-summable by hypothesis, hence by the dominated convergence for series we may pass the limit inside the sum and obtain
\[
\lim_{h\to 0}\left\|\frac{F(t+h)-F(t)}{h}-f(t)\right\|_p^p = 0.
\]
This proves that $F$ is differentiable at every $t\in E$.
\end{proof}

\begin{remark}\label{r:counterexample-nonflat}
The assumption $\phi'(0)\neq 0$ in the previous corollary cannot be removed. Without that extra hypothesis, for each $0<p<1$ we can construct examples of Lipschitz maps $F\tp\RR\to \ell_p$ of the form
\[
F(t)=\sum_{n=1}^\infty a_n \phi(b_n t)e_n
\]
with $b_{n}$ increasing to $\infty$ such that $\sum_{n=1}^\infty (a_nb_n)^p=\infty$. Indeed, let
\[
\phi(u)=
\begin{cases}
0, & \mbox{if } u\le 1,\\
u-1, & \mbox{if } 1<u<2,\\
1, & \mbox{if } u\ge 2.
\end{cases}
\]
Then $\phi$ is $1$-Lipschitz and differentiable at $0$, with $\phi'(0)=0$. Now set
\[
a_n=2^{-n}, \qquad b_n=2^n, \qquad n\in\NN,
\]
and define
\[
F\tp\RR\to\ell_p,
\qquad
F(t)=\sum_{n=1}^\infty 2^{-n}\phi(2^n t)e_n.
\]
Since $0\le \phi\le 1$ and $(2^{-n})_{n=1}^\infty\in \ell_p$, the map $F$ is well defined. Moreover,
\[
a_n b_n=1 \qquad\text{for every }n,
\]
and therefore
\[
\sum_{n=1}^\infty (a_nb_n)^p=\sum_{n=1}^\infty 1=\infty.
\]
We claim that $F$ is Lipschitz. For each $n\in\NN$, put
\[
F_n(t):=2^{-n}\phi(2^n t), \qquad t\in\RR,
\]
and $J_n=[2^{-n},2^{1-n}]$. We observe the following properties.
\begin{enumerate}[label=(F.\arabic*)]
\item\label{F:a} $F_n(t) \in[0, 2^{-n}]$ for all $t\in\RR$.
\item\label{F:b} $F_n$ is $1$-Lipschitz.
\item\label{F:c} If $s$, $t\in J_n$, then $F_n(t)-F_n(s)=t-s$.
\item\label{F:h} $F_n$ is constant on every interval contained in $\RR\setminus J_n$.
\item\label{F:g} $\abs{J_n}=2^{-n}$ for all $n\in\NN$.
\end{enumerate}
Further, given $k$, $n\in\NN$, since the intervals $(J_n)_{n=1}^\infty$ have pairwise disjoint interiors,
\begin{enumerate}[label=(F.\arabic*),resume]
\item\label{F:d} if $k\not=n$ then $F_n$ is constant on $J_k$; and
\item\label{F:e} If $k\ge n+2$, $t\in J_n$, and $s\in J_k$, then $t-s\ge 2^{-1-n}$.
\end{enumerate}

Let $-\infty< s<t<\infty$. Choose $m\in\ZZ$ such that
\[
2^{-m}<h:=t-s\le 2^{-m+1}.
\]
We split the sum
\[
\|F(t)-F(s)\|_p^p
=
\sum_{n=1}^\infty |F_n(t)-F_n(s)|^p.
\]
according to whether $n\ge m$ or $n\le m-1$. In the former case, by \ref{F:a},
\[
\sum_{n=m}^\infty |F_n(t)-F_n(s)|^p
\le\sum_{n=m}^\infty 2^{-np}
=\frac{2^{-mp}}{1-2^{-p}}
\le\frac{(t-s)^p}{1-2^{-p}}.
\]

In the latter case, $h\le \abs{J_n}$ by \ref{F:g}. Hence, by \ref{F:h}, $F_n(t)-F_n(s)=0$ unless $[s,t]$ meets $J_n$. Morevoer, by \ref{F:e}, the interval $[s,t]$ meets at most two intervals $J_n$ with $n\le m-1$. Using \ref{F:b} we obtain
\[
\sum_{n=1}^{m-1} |F_n(t)-F_n(s)|^p\le 2 \sup_n |F_n(t)-F_n(s)|^p \le 2 h^p.
\]
Combining the two estimates, we obtain that $F$ is Lipschitz with
\[
\Lip(F) \le 2+\frac{1}{1-2^{-p}}.
\]
\end{remark}

However, this construction does not provide a non-differentiable Lipschitz function.
\begin{proposition}\label{p:remark33-diff}
Let $0<p<1$, and let $F\tp\RR\to\ell_p$ be the map from Remark~\ref{r:counterexample-nonflat}, namely $F(t)=\sum_{n=1}^\infty F_n(t)e_n$, where
\[
F_n(t)=
\begin{cases}
0, & t\le 2^{-n},\\
t-2^{-n}, & 2^{-n}<t<2^{1-n},\\
2^{-n}, & 2^{1-n}\le t.
\end{cases}
\]
Then $F$ is differentiable at every point
\[
t\in \RR \setminus \enpar{ \{0,1 \} \cup \{2^{-m} \tq m\in\NN\}}.
\]
More precisely, if
\[
2^{-n}<t<2^{1-n},
\]
for some $n\in\NN$, then
\[
F'(t)=e_n.
\]
On the other hand, $F$ is not differentiable at $0$ nor at any dyadic point
\[
t=2^{-m}, \qquad m\in\NN\cup\{0\}.
\]
In fact, $F$ is not even right differentiable at $0$ either.
\end{proposition}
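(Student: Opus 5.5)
The plan is to analyse the difference quotients coordinatewise and exploit the structure \ref{F:a}--\ref{F:e} from Remark~\ref{r:counterexample-nonflat}. We need three things: differentiability in the open intervals $(2^{-n},2^{1-n})$, differentiability off $[0,1]$, and failure of differentiability at the listed points.

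\emph{Points outside $[0,1]$.} For $t<0$ every $F_n$ vanishes on a neighbourhood of $t$. For $t>1$ every $F_n$ equals $2^{-n}$ near $t$. Hence $F$ is locally constant there and $F'(t)=0$.

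\emph{Interior points $t\in(2^{-n},2^{1-n})$.} Fix such a $t$ and take $|h|$ smaller than the distance from $t$ to the endpoints of $J_n$. Then $t+h$ stays in the interior of $J_n$. By \ref{F:c} and \ref{F:d}, $F_n(t+h)-F_n(t)=h$, while $F_k(t+h)=F_k(t)$ for every $k\neq n$, since $F_k$ is constant on $J_n$. Therefore
\[
\frac{F(t+h)-F(t)}{h}=e_n
\]
exactly for all small $h$, and $F'(t)=e_n$.

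\emph{The point $0$.} Take $h=2^{-m}$. Then $F_n(h)=2^{-n}$ for $n\ge m+1$ and $F_n(h)=0$ for $n\le m$. Hence
\[
\Bigl\|\frac{F(h)}{h}\Bigr\|_p^p=2^{mp}\sum_{n>m}2^{-np}=\frac{2^{-p}}{1-2^{-p}},
\]
which is bounded away from $0$. If $F$ were right differentiable at $0$, the right derivative $v$ would satisfy $e_k^*(v)=\lim_{h\to0^+}F_k(h)/h=0$ for each $k$, because $F_k$ vanishes near $0$. So $v=0$, which contradicts the bound above. The same computation also shows there is no two-sided derivative at $0$.

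\emph{Dyadic points $t=2^{-m}$, $m\ge 0$.} Here $t$ is the left endpoint of $J_m$ (when $m\ge1$) and the right endpoint of $J_{m+1}$. Taking $h\to0^+$, the argument of the previous case gives the difference quotient $e_m$, or $0$ when $m=0$. Taking $h\to0^-$, the point $t+h$ lies in $J_{m+1}$, so the quotient is $e_{m+1}$. These one-sided limits are distinct vectors of $\ell_p$, so $F$ is not differentiable at $t$.

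The main subtlety is at $0$. There one must make sure that the weak (coordinatewise) candidate derivative $0$ is genuinely not attained in quasi-norm, and this is exactly what the explicit geometric sum along $h=2^{-m}$ establishes.
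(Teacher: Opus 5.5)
Your proof is correct and follows the paper's argument essentially step by step: you use local constancy off $[0,1]$, the exact identity $F(s)-F(t)=(s-t)e_n$ on $J_n$ obtained from \ref{F:c} and \ref{F:d}, and distinct one-sided quotients at the dyadic points, including $t=1$. The only variation is at $0$. There you identify the sole possible right derivative as $0$ through the coordinate functionals and then observe that $\norm{F(2^{-m})/2^{-m}}_p^p=2^{-p}/(1-2^{-p})$ for every $m$, whereas the paper shows directly that these quotients $f_m$ do not form a Cauchy sequence; both arguments are equally elementary.
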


\begin{proof}
By \ref{F:h}, $F$ is constant on the intervals $(-\infty,0)$ and $(1,\infty)$. Hence, $F'(t)=0$ for all $t\in \RR\setminus[0,1]$, and
\[
\lim_{s\to 0^{-}} \frac{F(s)-F(0)}{s-0}=\lim_{s\to 1^{+}} \frac{F(s)-F(1)}{s-1}=0.
\]
Given $t\in(0,1]$, we use \ref{F:c} and \ref{F:d} to compute $F(s)-F(t)$ for $s$ in neighborhood of $t$. Fix $t\in (0,1)\setminus \{2^{-m} \tq m\in\NN\}$. Then there is a unique $n\in\NN$ such that
\[
2^{-n}<t<2^{1-n}.
\]
If $s\in(2^{-n},2^{1-n})$ then,
\[
F(s)-F(t)=(s-t) e_n.
\]
Thus $F$ is differentiable at $t$ and $F'(t)=e_n$.

Now fix $m\in\NN$ and let $t=2^{-m}$. We have
\[
F(s)-F(t)=\begin{cases} (s-t) e_{m+1},& \mbox{ if } 2^{-m-1}<s<t, \\ (s-t) e_m, & \mbox{ if } t<s<2^{-m+1}.\end{cases}
\]
Since $e_m\neq e_{m+1}$, the difference quotients have different left and right limits. Hence $F$ is not differentiable at $t$.

If $t=1$, and $s\in(1/2,1)$, then $F(s)-F(t)=(s-t) e_1$. Since $e_1\not=0$, $F$ is not differentiable at $1$ either.

Finally, consider the endpoint $0$. Given $n$, $m\in\NN$,
\[
F_n(2^{-m})=\begin{cases} 0, & \mbox{ if } n \le m, \\ 2^{-n}, & \mbox{ if } n > m.\end{cases}
\]
Therefore, since $F(0)=0$,
\[
\frac{F(2^{-m})-F(0)}{2^{-m}-0}
=
f_m:=\sum_{n=1}^\infty 2^{-n} e_{n+m}.
\]
Given $m$, $k\in\NN$ with $m>k$,
\[
\norm{f_m-f_k}_p \ge \abs{e_{k+1}^*(f_m-f_k)} = \frac{1}{2}.
\]
Consequently, $(f_m)_{m=1}^\infty$ is not a Cauchy sequence. Hence, $F$ is not right differentiable at $0$.
\end{proof}
\section{Metric differentiability and rigidity in \texorpdfstring{$\ell_p$}{}}\label{sec:metric}\noindent
In the preceding sections we studied differentiability of Lipschitz mappings $F\tp I\to \ell_p$, $0<p<1$, with respect to the quasi-distance induced by the quasi-norm.
In this section we adopt a different point of view and investigate \emph{metric differentiability} of Lipschitz mappings into $\textsf{F}$-spaces, with special emphasis on the metric space $(\ell_p,\norm{\cdot}_p^p)$.

When $\ell_{p}$ is equipped with the natural metric
\[
(x,y) \mapsto \Vert x-y\Vert_{p}^{p}=\sum_{n=1}^\infty \abs{x_n-y_n}^p, \qquad x=(x_n)_{n=1}^\infty, y=(y_n)_{n=1}^\infty\in \ell_{p},
\]
the resulting metric space is a \emph{snowflake} of a quasi-normed space. We shall see that, in sharp contrast with the quasi-norm setting, the metric geometry of $(\ell_p,\norm{\cdot}_p^p)$ is extremely rigid: every Lipschitz curve from an interval into this space must be constant.

\begin{theorem}\label{thm:metric-rigidity}
Let $0<p<1$, $X$ be a $p$-Banach space with the point separation property, and $F\tp I\to (X,\norm{\cdot}_X^p)$ be a Lipschitz mapping. Then $F$ is constant on $I$.
\end{theorem}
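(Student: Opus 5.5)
The plan is to pass from the metric Lipschitz condition to a Hölder condition of exponent $1/p>1$ in the quasi-norm, and then use functionals together with the point separation property. Suppose
\[
\norm{F(t)-F(s)}_X^p\le L\abs{t-s},\qquad s,t\in I .
\]
Rewriting in terms of the quasi-norm gives
\[
\norm{F(t)-F(s)}_X\le L^{1/p}\abs{t-s}^{1/p},\qquad s,t\in I .
\]
Since $1/p>1$, this is a Hölder condition of exponent strictly greater than one. Curves of this type into $\RR$ are constant, and the remaining work is transferring that fact to $X$.

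Fix $x^*\in X^*$. Since $X^*$ is the dual of a $p$-Banach space, there is a constant with $\abs{x^*(x)}\le\norm{x^*}\,\norm{x}_X$. Applying this to the scalar function $g=x^*\circ F$ gives
\[
\abs{g(t)-g(s)}\le \norm{x^*}L^{1/p}\abs{t-s}^{1/p}.
\]
Hence $\abs{g(t)-g(s)}/\abs{t-s}\to0$ as $t\to s$, so $g$ is differentiable everywhere on $I$ with $g'\equiv0$. By the mean value theorem, $g$ is constant on $I$.

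Alternatively, and without calculus, I would partition $[s,t]$ into $m$ equal pieces. The triangle inequality in $\RR$ then gives
\[
\abs{g(t)-g(s)}\le m\,C\,(\abs{t-s}/m)^{1/p},
\]
and the right-hand side tends to $0$ as $m\to\infty$ because $1/p>1$.

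Consequently $x^*(F(t)-F(s))=0$ for all $x^*\in X^*$ and all $s,t\in I$. By the point separation property, $F(t)=F(s)$, so $F$ is constant.

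No step here is a real obstacle. The only point needing care is that the direct partition argument in $X$ fails, because the $p$-triangle inequality gives
\[
\norm{F(t)-F(s)}_X^p\le\sum_{j}\norm{F(t_j)-F(t_{j-1})}_X^p\le L\abs{t-s},
\]
which does not shrink under refinement. One must therefore go through scalar functionals, where genuine subadditivity is available. This is exactly where the point separation hypothesis enters, and it cannot be dropped: the curve \eqref{eq:LipLp} in $L_p[0,1]$ satisfies $\norm{F(t)-F(s)}_p^p=\abs{t-s}$ and is nonconstant.
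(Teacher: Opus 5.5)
Your proof is correct and is essentially the paper's argument. The paper composes $F$ with the envelope map $J_X\tp X\to\widehat{X}$, notes that a Hölder map of exponent $1/p>1$ into the normed space $\widehat{X}$ is constant, and concludes by the injectivity of $J_X$ (which is exactly the point separation property). You run the same argument one functional $x^*\in X^*$ at a time, which gives the same conclusion without needing the Banach envelope.
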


\begin{proof}
Let $L>0$ be such that
\[
\|F(t)-F(s)\|_X^{p}\le L|t-s|,\qquad s,t\in I.
\]
Taking $p$-roots yields
\[
\|F(t)-F(s)\|_X \le L^{1/p}|t-s|^{1/p}.
\]
Consequently, if $J_X\tp X \to \widehat{X}$ is the envelope map,
\[
\norm{J_X(F(t))-J_X(F(s))}_{\widehat{X}}\le L^{1/p}|t-s|^{1/p}\qquad s,t\in I.
\]
Since $1/p>1$, this shows that the Banach-valued function $J_X \circ F$ is H\"older continuous with exponent strictly greater than one. It is well known, and easy to verify directly, that any such mapping from an interval into a normed space must be constant. Indeed, partitioning an interval into arbitrarily many subintervals and using the triangle inequality forces all increments to vanish. Therefore, since $J_X$ is one-to-one, $F$ is constant.
\end{proof}

We emphasize that the assumption that the dual space $X^*$ separates the point of $X$ is essential in Theorem~\ref{thm:metric-rigidity}. Indeed the function $F$ defined as in \eqref{eq:LipLp} is Lipschitz when regarded as a curve into $(L_p[0,1], \norm{\cdot}_{L_p}^p)$.

We obtain as an immediate consequence of Theorem~\ref{thm:metric-rigidity} that metric differentiability in $(\ell_p,\norm{\cdot}_p^p)$ is entirely trivial.

\begin{corollary}
Let $0<p<1$, $X$ be a $p$-Banach space with the point separation property, and $F\tp I\to (X,\norm{\cdot}_X^p)$ be a Lipschitz mapping. Then $\md(F)(t)=0$ for all $t\in I$.
\end{corollary}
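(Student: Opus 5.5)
The corollary follows directly from Theorem~\ref{thm:metric-rigidity}, and I will argue as follows. Let $F\tp I\to (X,\norm{\cdot}_X^p)$ be Lipschitz. By Theorem~\ref{thm:metric-rigidity}, $F$ is constant on $I$, say $F\equiv x_0$. Fix any $t\in I$. For every $s\in I$ with $s\neq t$, the metric distance between $F(s)$ and $F(t)$ is
\[
d(F(s),F(t))=\norm{F(s)-F(t)}_X^p=\norm{x_0-x_0}_X^p=0 .
\]
Hence the difference quotients
\[
\frac{d(F(s),F(t))}{\abs{s-t}}
\]
vanish identically for $s\neq t$. Their limit as $s\to t$ therefore exists and equals $0$. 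By the definition of the metric derivative recalled in Section~\ref{sec:Background}, $\md(F)(t)=0$, and this holds at every point $t\in I$, not merely almost everywhere.

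The only point needing care is that $t$ is a limit point of $I$, so that the limit defining $\md(F)(t)$ is meaningful. This is automatic, since $I$ is an interval, tacitly nondegenerate. There is no genuine obstacle here: all the substance lies in Theorem~\ref{thm:metric-rigidity}, which uses the point separation property via the injectivity of the envelope map $J_X$.

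It is worth remarking in passing that the hypothesis of point separation cannot be dropped. For the curve in \eqref{eq:LipLp}, regarded in $(L_p[0,1],\norm{\cdot}_{L_p}^p)$, the quotient equals $1$ identically. That curve is Lipschitz in the snowflake metric and has metric derivative $1$ everywhere, even though it is a nonconstant curve.
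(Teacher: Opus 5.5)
Your proof is correct and matches the paper, which presents the corollary as an immediate consequence of Theorem~\ref{thm:metric-rigidity}: a constant curve has identically vanishing difference quotients, so $\md(F)(t)=0$ at every $t\in I$. Your closing remark is also correct: the curve \eqref{eq:LipLp}, viewed in $(L_p[0,1],\norm{\cdot}_{L_p}^p)$, has metric derivative $1$ everywhere, which mirrors the paper's own comment that the point separation hypothesis is essential.
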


Thus, while Theorem~\ref{thm:Kir} guarantees the existence of metric derivatives almost everywhere for Lipschitz curves in arbitrary metric spaces, in the specific case of $(\ell_p,\norm{\cdot}_p^p)$ this derivative always vanishes, because no nonconstant Lipschitz curves exist.

The rigidity exhibited by $(\ell_p,\norm{\cdot}_p^p)$ stands in sharp contrast with the behavior observed in the $p$-norm geometry of $\ell_p$. In Sections~\ref{sec:decoupled} and earlier, we showed that Lipschitz mappings $F\tp I\to\ell_p$ may be highly nontrivial, and that their differentiability properties depend on delicate summability and integrability conditions.

This contrast highlights a fundamental point: in the nonlocally convex setting, differentiability phenomena are extremely sensitive to the choice of geometry. While the metric $(\ell_p,\norm{\cdot}_p^p)$ completely suppresses nontrivial Lipschitz curves, the $p$-norm geometry allows rich behavior, including subtle interactions between coordinatewise differentiation and global convergence of difference quotients. From this perspective, metric differentiability and $p$-norm differentiability address fundamentally different questions and results in one framework should not be expected to carry over to the other.
\section*{Statements and Declarations}
\subsection*{Conflict of Interest}
The authors declare that they have no conflict of interest.
\subsection*{Data Availability}
Since no datasets were generated or analyzed during the current study, data sharing does not apply to this article.
\subsection*{Generative AI use}
AI tools were used during the preparation of the manuscript for literature-related assistance, language polishing, and preliminary exploration.


\begin{bibsection}
\begin{biblist}

\bib{ACP2010}{inproceedings}{
AUTHOR = {Alberti, Giovanni and Cs\"ornyei, Marianna and Preiss, David},
TITLE = {Differentiability of {L}ipschitz functions, structure of null
sets, and other problems},
BOOKTITLE = {Proceedings of the {I}nternational {C}ongress of
{M}athematicians. {V}olume {III}},
PAGES = {1379--1394},
PUBLISHER = {Hindustan Book Agency, New Delhi},
YEAR = {2010},
ISBN = {978-81-85931-08-3; 978-981-4324-33-5; 981-4324-33-7},
}

\bib{AlbiacJOCA}{article}{
author={Albiac, F.},
title={The role of local convexity in Lipschitz maps},
journal={J. Convex Anal.},
volume={18},
date={2011},
number={4},
pages={983--997},
}

\bib{AlbiacAnsorena2012}{article}{
author={Albiac, F.},
author={Ansorena, J. L.},
title={On a problem posed by M. M. Popov},
journal={Studia Math.},
volume={211},
date={2012},
number={3},
pages={247--258},
}

\bib{AlbiacAnsorenaJFA1}{article}{
author={Albiac, F.},
author={Ansorena, J. L.},
title={Integration in quasi-Banach spaces and the fundamental theorem of calculus},
journal={J. Funct. Anal.},
volume={264},
date={2013},
number={9},
pages={2059--2076},
}

\bib{AlbiacAnsorenaJFA2}{article}{
author={Albiac, F.},
author={Ansorena, J. L.},
title={Optimal average approximations for functions mapping in quasi-Banach spaces},
journal={J. Funct. Anal.},
volume={266},
date={2014},
number={6},
pages={3894--3905},
}

\bib{AlbiacAnso2016}{article}{
author={Albiac, F.},
author={Ansorena, J. L.},
title={On Lipschitz maps, martingales, and the Radon-Nikod\'ym property
for ${F}$-spaces},
journal={Mediterr. J. Math.},
volume={13},
date={2016},
number={4},
pages={1963--1980},
issn={1660-5446},
}

\bib{AABW2021}{article}{
author={Albiac, Fernando},
author={Ansorena, Jos\'{e}~L.},
author={Bern\'{a}, Pablo~M.},
author={Wojtaszczyk, Przemys{\l}aw},
title={Greedy approximation for biorthogonal systems in quasi-{B}anach
spaces},
date={2021},
journal={Dissertationes Math. (Rozprawy Mat.)},
volume={560},
pages={1\ndash 88},
}

\bib{AlbiacKalton2016}{book}{
author={Albiac, Fernando},
author={Kalton, Nigel~J.},
title={Topics in {B}anach space theory},
edition={Second Edition},
series={Graduate Texts in Mathematics},
publisher={Springer, [Cham]},
date={2016},
volume={233},
ISBN={978-3-319-31555-3; 978-3-319-31557-7},
url={https://doi.org/10.1007/978-3-319-31557-7},
note={With a foreword by Gilles Godefroy},
review={\MR{3526021}},
}

\bib{Assouad1983}{article}{
AUTHOR = {Assouad, Patrice},
TITLE = {Plongements lipschitziens dans {${\bf R}\sp{n}$}},
JOURNAL = {Bull. Soc. Math. France},
VOLUME = {111},
YEAR = {1983},
NUMBER = {4},
PAGES = {429--448},
ISSN = {0037-9484},
URL = {http://www.numdam.org/item?id=BSMF_1983__111__429_0},
}

\bib{Bate_Structure_of_measures_2015}{article}{
AUTHOR = {Bate, David},
TITLE = {Structure of measures in {L}ipschitz differentiability spaces},
JOURNAL = {J. Amer. Math. Soc.},
VOLUME = {28},
YEAR = {2015},
NUMBER = {2},
PAGES = {421--482},
ISSN = {0894-0347,1088-6834},
DOI = {10.1090/S0894-0347-2014-00810-9},
URL = {https://doi.org/10.1090/S0894-0347-2014-00810-9},
}

\bib{BenLin}{book}{
author={Benyamini, Y.},
author={Lindenstrauss, J.},
title={Geometric nonlinear functional analysis. Vol. 1},
series={American Mathematical Society Colloquium Publications},
volume={48},
publisher={American Mathematical Society},
place={Providence, RI},
date={2000},
pages={xii+488},
}

\bib{Birkhoff1935}{article}{
author={Birkhoff, Garrett},
issn={0002-9947},
issn={1088-6850},
doi={10.2307/1989687},
review={Zbl 0013.00803},
title={Integration of functions with values in a Banach space},
journal={Transactions of the American Mathematical Society},
volume={38},
pages={357--378},
date={1935},
publisher={American Mathematical Society (AMS), Providence, RI},
}

\bib{Bochner1933}{article}{
author={Bochner, Salomon},
language={German},
title={Integration von Funktionen, deren Werte die Elemente eines Vektorraumes sind},
journal={Fundamenta Mathematicae},
volume={20},
pages={262--276},
date={1933},
}

\bib{Bochner1933b}{article}{
author={Bochner, S.},
language={German},
title={Absolut-additive abstrakte Mengenfunktionen.},
journal={Fundamenta Mathematicae},
volume={21},
pages={211--213},
date={1933},
publisher={Polish Academy of Sciences (Polska Akademia Nauk - PAN), Institute of Mathematics (Instytut Matematyczny), Warsaw},
}

\bib{Clar}{article}{
author={Clarkson, J. A.},
title={Uniformly convex spaces},
journal={Trans. Amer. Math. Soc.},
volume={40},
date={1936},
number={3},
pages={396--414},
}

\bib{HeinrichMankiewicz1982}{article}{
author={Heinrich, S.},
author={Mankiewicz, P.},
title={Applications of ultrapowers to the uniform and Lipschitz classification of Banach spaces},
journal={Studia Math.},
volume={73},
date={1982},
number={3},
pages={225--251},
}

\bib{DunMor}{article}{
author={Dunford, N.},
author={Morse, A. P.},
title={Remarks on the preceding paper of James A. Clarkson: ``Uniformly
convex spaces'' [Trans.\ Amer.\ Math.\ Soc.\ {\bf 40} (1936), no.\ 3; 1
501 880]},
journal={Trans. Amer. Math. Soc.},
volume={40},
date={1936},
number={3},
pages={415--420},
}

\bib{Gromov_metric_structures_1999}{book}{
AUTHOR = {Gromov, Misha},
TITLE = {Metric structures for {R}iemannian and non-{R}iemannian
spaces},
SERIES = {Progress in Mathematics},
VOLUME = {152},
NOTE = {Based on the 1981 French original [MR0682063 (85e:53051)],
With appendices by M.\ Katz, P.\ Pansu and S.\ Semmes,
Translated from the French by Sean Michael Bates},
PUBLISHER = {Birkh\"auser Boston, Inc., Boston, MA},
YEAR = {1999},
PAGES = {xx+585},
ISBN = {0-8176-3898-9},
}

\bib{Hunt1966}{article}{
author={Hunt, R.~A.},
title={On {$L(p,\,q)$} spaces},
date={1966},
ISSN={0013-8584},
journal={Enseign. Math. (2)},
volume={12},
pages={249\ndash 276},
review={\MR{223874}},
}

\bib{Hyers1939}{article}{
author={Hyers, D.~H.},
title={Locally bounded linear topological spaces},
date={1939},
ISSN={0034-7760},
journal={Rev. Ci. (Lima)},
volume={41},
pages={555\ndash 574},
}

\bib{Kalton1980b}{article}{
author={Kalton, Nigel~J.},
title={Linear operators on {$L_{p}$} for {$0<p<1$}},
date={1980},
ISSN={0002-9947},
journal={Trans. Amer. Math. Soc.},
volume={259},
number={2},
pages={319\ndash 355},
url={https://doi-org/10.2307/1998234},
review={\MR{567084}},
}

\bib{KaltonZero}{article}{
author={Kalton, Nigel~J.},
title={Curves with zero derivative in $\mathsf{F}$-spaces},
journal={Glasgow Math. J.},
volume={22},
date={1981},
number={1},
pages={19--29},
issn={0017-0895},
}

\bib{KaltonRNP}{article}{
author={Kalton, Nigel~J.},
title={An analogue of the Radon-Nikod\'ym property for nonlocally convex quasi-Banach spaces},
journal={Proc. Edinburgh Math. Soc. (2)},
volume={22},
date={1979},
number={1},
pages={49--60},
}

\bib{Kalton1986}{article}{
author={Kalton, Nigel~J.},
title={Banach envelopes of nonlocally convex spaces},
date={1986},
ISSN={0008-414X},
journal={Canad. J. Math.},
volume={38},
number={1},
pages={65\ndash 86},
url={https://doi.org/10.4153/CJM-1986-004-2},
review={\MR{835036}},
}

\bib{Kalton2008}{misc}{
author={Kalton, Nigel~J.},
title={Personal communication to the first-named author},
date={2008},
}

\bib{KPR1985}{book}{
author={Kalton, Nigel~J.},
author={Peck, N. T.},
author={Roberts, James W.},
title={An $F$-space sampler},
series={London Mathematical Society Lecture Note Series},
volume={89},
publisher={Cambridge University Press, Cambridge},
date={1984},
pages={xii+240},
}

\bib{Kirchheim1994}{article}{
author={Kirchheim, Bernd},
title={Rectifiable metric spaces: local structure and regularity of the Hausdorff measure},
journal={Proc. Amer. Math. Soc.},
volume={121},
date={1994},
number={1},
pages={113--123},
}

\bib{Rudin1991}{book}{
author={Rudin, Walter},
title={Functional analysis},
edition={Second},
series={International Series in Pure and Applied Mathematics},
publisher={McGraw-Hill, Inc., New York},
date={1991},
}

\bib{Turpin1976}{article}{
author={Turpin, Philippe},
title={Convexit\'{e}s dans les espaces vectoriels topologiques
g\'{e}n\'{e}raux},
date={1976},
ISSN={0012-3862},
journal={Dissertationes Math. (Rozprawy Mat.)},
volume={131},
pages={221},
}

\bib{Vogt1967}{article}{
author={Vogt, D.},
title={Integrationstheorie in $p$-normierten R\"aumen},
language={German},
journal={Math. Ann.},
volume={173},
date={1967},
pages={219--232},
}
\end{biblist}
\end{bibsection}
\end{document}